\newtheorem{theorem}{Theorem}[section]
\newtheorem{lemma}[theorem]{Lemma}
\newtheorem{proposition}[theorem]{Proposition}
\newtheorem{corollary}[theorem]{Corollary}
\theoremstyle{definition}
\newtheorem{definition}[theorem]{Definition}
\theoremstyle{remark}
\newtheorem{remark}[theorem]{Remark}
\numberwithin{equation}{section}
\numberwithin{equation}{section}
\newcommand{\be}{\begin{equation}}
\newcommand{\ee}{\end{equation}}
\newcommand{\ba}{\begin{aligned}}
\newcommand{\ea}{\end{aligned}}
\newcommand{\N}{{\mathbb N}}
\newcommand{\Z}{{\mathbb Z}}
\def\csi1{\circ\sigma^{-1}}
\def\mc{\mathcal}
\newcommand{\B}{{\mathcal B}}
\begin{document}

\title[Cohomology of hyperfinite Borel actions]{Cohomology of hyperfinite Borel actions}

\author{Sergey Bezuglyi}
\address{Department of Mathematics, University of Iowa, Iowa City,
52242 IA, USA}
\email{sergii-bezuglyi@uiowa.edu}
\email{shrey-sanadhya@uiowa.edu}

\author{Shrey Sanadhya}

\subjclass[2020]{37A40, 37A99, 37B05, 37B99, 54H05.}

\keywords{Borel automorphism, cocycle, coboundary, hyperfinite
 countable Borel equivalence relation,  odometer}

\begin{abstract}
We study cocycles of countable groups $\Gamma$ of Borel
 automorphisms of a
 standard Borel space $(X, \B)$ taking values in a locally compact second
countable group $G$. We prove that for a hyperfinite group $\Gamma$
  the subgroup of coboundaries is dense in the 
group of cocycles. We describe all Borel cocycles of the $2$-odometer
and show that  any such cocycle  is cohomologous to a cocycle with 
values in a countable 
dense subgroup $H$ of $G$. We also provide a Borel version of 
Gottschalk-Hedlund theorem.  
 
\end{abstract}

\maketitle

\tableofcontents
\section{ Introduction}\label{sec intro}

Let $\Gamma$ be a countable
group of Borel automorphisms of a standard Borel space $(X, \B)$ and
$G$ an abelian locally compact second countable (l.c.s.c.) group.
A Borel map $\alpha : \Gamma \times X \to G$ is called a 
\textit{cocycle} if it satisfies the so called \textit{cocycle identity} for all 
$(\gamma, x)$:
\be\label{eq-cocycle def}
\alpha(\gamma_1\gamma_2, x) = \alpha(\gamma_1, \gamma_2 x) 
+ \alpha(\gamma_2, x),\ \ \alpha(\mathbbm 1, x) = 0,
\ee
where $\mathbbm 1$ is the identity map and $0\in G$. 
A cocycle $\alpha(\gamma, x)$ is called a \textit{coboundary} if there 
exists a Borel function $f : X \to G$ such that $\alpha(\gamma, x) =
f(\gamma x) - f(x)$. Two cocycles, $\alpha$ and $\beta$, are called 
\textit{cohomologous} if $\alpha - \beta$ is a coboundary. 
The set  $Z^1(\Gamma\times X, G)$ of all cocycles is an abelian group,
and coboundaries $B^1(\Gamma \times X, G)$  form a subgroup of
 $Z^1(\Gamma\times X, G)$.

Cocycles play an important role in ergodic theory. They are studied up to 
null sets: if $\Gamma$ is a 
countable group of non-singular automorphisms of a standard measure
space $(X, \B, \mu)$, then relation \eqref{eq-cocycle def} must be true 
$\mu$-a.e.  Cocycles are widely used in
the theory of orbit equivalence of dynamical systems and in various
 constructions (e.g., skew product) helping to 
classify dynamical systems and clarify the properties of automorphism
groups of a measure space. They are also one of the central tools in 
the representation theory, theory of groupoids, classification of ergodic 
actions of amenable and non-amenable groups, etc.  Understanding the
 structure of cocycles for a \textit{hyperfinite automorphism group }
 (it is a group
 which is orbit equivalent to a single transformation) led to a
more detailed classification than orbit equivalence 
\cite{BezuglyiGolodets_1991}, \cite{GolodetsSinelshchikov_1987}, 
\cite{FeldmanSutherlandZimmer1989}, \cite{GolSinelsh1994}, 
\cite{Hamachi_2000}.  We give here 
several principal references which include some pioneering works of 
Moore \cite{Moore_1970}, Ramsay \cite{Ramsay_1971},
\cite{FeldmanMoore_1977},  
Schmidt \cite{Schmidt_1977, Schmidt_1990}, Zimmer 
\cite{Zimmer_1984}. (A more detailed list of papers devoted to 
cocycles is too long to mention all crucial contributions 
to the theory of cocycles.)   

It is well known that there are impressive parallels between ergodic
theory and Borel dynamics, though the fact that, for a Borel dynamical 
system, there is no prescribed measure on the underlying space makes these two theories essentially different. 
 In this paper, we prove several results about cocycles in the context
of Borel dynamics. They are motivated by the existing counterparts in
the framework of ergodic theory. There are many important problems 
in dynamics involving Borel cocycles that deserve to be studied. For 
example, it would be 
interesting to find out whether the notion of a ratio set makes sense for
Borel cocycles taking values in l.c.s.c. (abelian) groups.  Another 
application of cocycles might be related to
the study of a Borel version of Mackey range. These concepts    
are extremely important for the classification of automorphism groups
in ergodic theory. Remark that these and other results in ergodic theory 
remain true for 
nonabelian groups, in general. In the case of Borel cocycles, even abelian 
case is not well understood. We hope to contribute to the formulated 
problems in further works. In this paper, we focused on cocycles with 
values in abelian groups.  It is worth mentioning that various properties of
Borel cocycles were  considered in the  papers  \cite{Becker_2013},
\cite{ConzeRaugi_2009}, \cite{Danilenko1998}, 
 \cite{FeldmanMoore_1977}, \cite{Miller_2006}, \cite{Miller_2008},
 and some others. 

We fix the main setting for the paper: $\Gamma$ is a hyperfinite free
countable group of Borel automorphisms on a standard Borel space 
$(X, \B)$ and   $\alpha \in Z^1(\Gamma \times X, G)$ is a cocycle 
of $\Gamma$ with values in an abelian l.c.s.c. 
group $G$. In this setting, the following results are proved:
(i) we introduce a  topology on the space of
 Borel functions (which is an analogue of the convergence in 
measure topology) and prove that the set of coboundaries is dense in
the set of all cocycles;  (ii) using an exact formula that describes 
cocycles over an odometer,
we prove that every cocycle is cohomologous to a cocycle with values
in a dense countable subgroup; (iii) we give a criterion (a version
of Gottschalk-Hedlund theorem) for a cocycle with values in $G$ 
to be a coboundary.

The  study of Borel cocycles is mostly motivated by the theory of 
orbit equivalence of groups of Borel automorphisms. The property of
orbit equivalence for groups of 
Borel automorphisms is equivalent to isomorphism of the corresponding 
equivalence relations generated by orbits. The notion of a countable 
Borel equivalence relation (CBER)  has been  extensively studied in the
descriptive  set theory and Borel dynamics. This concept has many
 applications in other adjacent areas.  We refer to 
\cite{BeckerKechris_1996}, \cite{DoughertyJacksonKechris_1994},  
\cite{JacksonKechrisLouveau_2002}, \cite{Hjorth_2000}, 
\cite{Kechris_1995}, \cite{KechrisMiller_2004}, \cite{Nadkarni_2013},
\cite{Varadarajan_1963}, and 
\cite{Weiss_1984}, where the reader can find connections  of orbit 
equivalence theory with the descriptive set theory and further references.

Our main results about cocycles are of the following nature. Firstly, 
it is not hard to see that orbit equivalent groups of Borel automorphisms 
have isomorphic groups of cocycles and coboundaries and therefore 
the cohomology groups. This means that the study of cocycles is 
naturally reduced to the the case when cocycles are considered on some
``model'' CBERs. In this connection, two types of dynamical systems  
are of crucial importance: odometers and shifts. The classification 
of hyperfinite CBERs up to isomorphism was a significant achievement 
 due to Dougherty-Jackson-Kechris
\cite{DoughertyJacksonKechris_1994}. They proved that the complete
 invariant of isomorphism of hyperfinite CBERs is the cardinality of the 
set of invariant measures. Odometers represent CBERs with a unique 
probability invariant measure. They are also the main ingredient for 
the constructions of CBERs
with finite (or countable) set of probability ergodic invariant measures. 
In Section 
\ref{sec 5}, we give an explicit formula for cocycles of the 2-odometer.
Our proof follows the approach used in \cite{Golodec_1969} and
\cite{GolodetsSinelshchikov_1987}  for 
measurable dynamical systems.  

Another key result about cocycles in ergodic theory states that
 coboundaries  of a non-singular group of automorphisms 
 $\Gamma \subset Aut(X, \B, \mu)$  are dense in the group of all
 cocycles if  $\Gamma$ is hyperfinite,
see e.g., \cite{ParthasarathySchmidt_1977} and 
\cite{Schmidt_1990} for a proof. Here the set 
 $Z^1(\Gamma\times X, G)$ is endowed with the topology of 
 convergence in measure. In Borel dynamics we do not have  a 
 prescribed measure on $(X, \B)$. Hence, to define an analogue of
the topology of convergence in measure, we have to work with all Borel
probability measures. (Our approach is similar to that used in 
\cite{BezuglyiDooleyKwiatkowski_2006} where 
an analogue of the uniform topology on $Aut(X, \B)$ was defined). 
In Sections 
\ref{sect topologies} and  \ref{Sec Density}, we consider topological 
properties of $Z^1(\Gamma\times X, G)$ and prove that coboundaries
are dense in $Z^1(\Gamma\times X, G)$ if $\Gamma$ is hyperfinite. 

 Gottschalk and Hedlund (see 
\cite[Theorem 4.11]{Gottschalk_Hedlund_1955}) provided a criterion for 
determining when a bounded cocycle of a minimal homeomorphism of 
compact space is a coboundary.  It was extended to minimal 
homeomorphisms of non-compact topological space in 
\cite{Browder_1958}. It is a well know fact that every Borel 
automorphism admits a continuous model, i.e., it is Borel isomorphic 
to a homeomorphism of a Polish space. Using this model we extend the
Gottschalk-Hedlund theorem to bounded Borel cocycles (taking value in 
an abelian l.c.s.c group) of minimal homeomorphisms of Polish space 
(see Theorem $\ref{thm GH}$).

The \textit{outline} of the paper is as follows. In Section 
\ref{sec prelim}, we
provide basic definitions and preliminary results about groups of
Borel automorphisms and cocycles.  In Section \ref{sect topologies}, 
we define a topology on the space of $G$-valued functions and discuss 
properties of this topology. We show 
that the group of cocycles of a hyperfinite group of automorphisms 
is a separable Hausdorff topological group. In 
Section \ref{Sec Density}, we prove the main result stating that for a
hyperfinite Borel action the subgroup of coboundaries is dense in the group 
of cocycles with respect to the topology defined in Section 
\ref{sect topologies}. We study cocycles of the $2$-odometer in Section 
\ref{sec 5}. In Section $\ref{sec 6}$, we prove the Borel version of 
Gottschalk-Hedlund Theorem.

\textit{Notation and Terminology:} Here are a few remarks about 
the exposition of our results in this paper. Firstly, we prefer to use the
 terminology
 which is traditional for dynamical systems in ergodic theory. This means 
that our principal objects are countable groups of Borel automorphisms 
not equivalence relations. 
But we also use the language of CBERs when it is convenient. Secondly,
we are aware that some results can be reproved for cocycles 
with values in non-abelian  l.c.s.c. groups, for example, those in Section 
\ref{sec 5}. Meantime, we will work with abelian groups in this section for consistency. The case of non-abelian groups deserves a separate study.

 Throughout the paper, we use the following {\bf notation}:

\begin{itemize}

\item $(X,\mathcal{B})$ is a standard Borel space with the 
$\sigma$-algebra of Borel sets $\mathcal{B}= \mathcal{B}(X)$.

\item A one-to-one Borel map $T$ of the space $(X,\mathcal{B})$ onto itself 
is called a Borel automorphism of $X$. In this paper the term 
"automorphism"  means a Borel automorphism of $(X,\mathcal{B})$.

\item $Aut(X,\mathcal{B})$ is the group of all Borel automorphisms of 
$X$ with the identity map ${\mathbb I} \in Aut(X, \mathcal{B})$.

\item A countable subgroup $\Gamma $ of  $Aut(X,\mathcal{B})$ is
called a group of Borel automorphisms. The full group generated by
$\Gamma$ is denoted by $[\Gamma]$. 

\item $\mathcal{M}_1(X)$ is the set of all Borel probability measures on 
$(X,\mathcal{B})$. 

\item $E(S,T)=\{ x\in X \ |\ Tx\ne Sx\} \cup \{x\in X \ |\ T^{-1}x\ne 
S^{-1}x\}$ where $S,T \in$ $Aut(X,\mathcal{B})$.

\end{itemize}

\section{Preliminaries}\label{sec prelim}

In this section we provide the basic definitions from Borel dynamics and descriptive 
set  theory.

\subsection{Automorphisms of standard Borel space} Let $X$ denote a separable completely metrizable space (also known as a \textit{Polish space}), and 
let $\mathcal{B}$ be the $\sigma$-algebra generated by the open sets in 
$X$. Then the pair $(X, \mathcal{B})$ is called a \textit{standard Borel space}.

A countable subgroup $\Gamma$  of 
$ Aut(X, \mathcal{B})$ is called a \textit{Borel automorphism group}. In 
this paper we focus only on countable Borel automorphism groups. Let $G$ be a countable group with  identity $e$. A \textit{Borel action} of the group $G$ on $(X, \mathcal B)$ is a group homomorphism $\rho : g \rightarrow \rho_g : G
 \rightarrow Aut(X, \mathcal{B})$. In other words, for each $g\in G$, $\rho_g : X \rightarrow X$ is a Borel automorphism such that 
(i) $\rho_{gh}(x)=\rho_g(\rho_h (x))$ for every $h \in G$ and  (ii) $\rho_{e}(x)=x$ for every $x
\in X$. Clearly, 
$\rho(G) = \{\rho_g : g \in G\} \subset Aut (X,\mathcal{B})$ is a countable 
subgroup. If, for some $x \in X$, the relation  $\rho_g(x) = x$  implies $g = e$, then $\rho$ is called a \textit{free action} of $G$. In this case, the group homomorphism $\rho$ is injective. We note that every Borel automorphism $T\in Aut(X, \mathcal B)$ defines a Borel  action of the group $\mathbb Z$ by the formula $\mathbb  Z \ni n \mapsto T^n \in Aut(X, \mathcal B)$. 
 \\

\textit{Countable Borel equivalence relation (CBER)}: An equivalence relation $E$ on $(X, \mathcal{B})$ is called Borel if it is a Borel subset of the product
space $E \subset X \times X$, where $X \times X$ is equipped with the Borel $\sigma$-algebra $\mathcal{B} \times \mathcal{B}$. It is called countable if every equivalence class 
$[x]_E := \{y \in X : (x,y) \in E\}$ is countable for all $x \in X$. If $C$ is 
a Borel set, then $[C]_E$ denotes the saturation of $C$ with respect to
the equivalence relation $E$, i.e., $[C]_E$ contains the entire class $[x]_E$
for every $x\in C$. 

For a countable subgroup $\Gamma$  of $ Aut(X, \mathcal{B})$, we denote 
$$
    E_X(\Gamma) = \{(x,y) \in X \times X: x=\gamma y
    \  \mbox{for\ some}\ \gamma \in \Gamma\}.
    $$
Then $E_X(\Gamma)$ is called the \textit{orbit equivalence relation} 
generated by the group $\Gamma$ on $X$. Clearly, 
$E_X(\Gamma)$ is a CBER. An equivalence relation $E$ is called 
\textit{aperiodic} if every $E$-class $[x]_E$ is countably infinite. 
In contrast, finite $E$-classes will be called \textit{periodic.} Similarly,
a Borel automorphism $P$ is called periodic at a point $x$ if there exists $k 
\in \mathbb N$ such that $P^kx = x$. The least such $k$ is called the period
of $P$ at $x$.

 It turns out that all CBER's are generated by group automorphisms. 

\begin{theorem} [Feldman--Moore \cite{FeldmanMoore_1977}] \label{FM}  Let $E$ be a
 countable Borel equivalence relation on a standard Borel space $(X,
 \mathcal{B})$. 
Then there is a countable group $\Gamma$ of Borel automorphisms of 
$(X,\mathcal{B})$ such that $E = E_X(\Gamma)$.
\end{theorem}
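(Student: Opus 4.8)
The plan is to exhibit $E$ as a countable union of graphs of Borel automorphisms of $X$ and then let $\Gamma$ be the group these automorphisms generate. Since $E$ is an equivalence relation with countable classes, it is a Borel subset of $X\times X$ all of whose vertical sections $E_x=[x]_E$ are countable. So the Lusin--Novikov uniformization theorem applies: a Borel subset of a product of standard Borel spaces with all vertical sections countable is a countable union of graphs of Borel partial functions. Applied to $E$ it yields Borel sets $D_n\in\mathcal B$ and Borel maps $f_n:D_n\to X$ with $E=\bigcup_n\mathrm{graph}(f_n)$. A routine refinement --- split each $D_n$ into countably many Borel pieces on which $f_n$ is injective, which is possible because each fibre $f_n^{-1}(y)$ is contained in the countable set $[y]_E$ --- lets me assume each $f_n$ is a partial Borel injection; after removing the diagonal and reindexing I may write $E\setminus\Delta_X=\bigsqcup_n\mathrm{graph}(f_n)$ with every $f_n$ fixed-point free.

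The technical heart is to upgrade the $f_n$ to honest Borel automorphisms whose graphs still lie inside $E$. A partial Borel injection need not itself extend to an automorphism of $X$, so I first refine the decomposition once more so that each piece $g$ satisfies $\mathrm{dom}(g)\cap\mathrm{ran}(g)=\emptyset$: the Borel graph on $X$ joining $x$ to $f_n(x)$ has maximum degree two, hence admits a Borel proper colouring with finitely many colours, and splitting $\mathrm{graph}(f_n)$ according to the colour of the first coordinate produces finitely many partial injections with disjoint domain and range. For such a piece $g:A\to B$ with $A\cap B=\emptyset$, the map equal to $g$ on $A$, to $g^{-1}$ on $B$, and to the identity on $X\setminus(A\cup B)$ is a Borel involution $\sigma$ of $X$; and since $\mathrm{graph}(g)\subseteq E$, reflexivity and symmetry of $E$ give $\mathrm{graph}(\sigma)\subseteq E$. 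Collecting all these involutions into a countable family $\{\sigma_i\}_{i\in\mathbb N}$, I obtain Borel involutions with $\mathrm{graph}(\sigma_i)\subseteq E$ for all $i$ and $E\setminus\Delta_X\subseteq\bigcup_i\mathrm{graph}(\sigma_i)$.

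Finally, put $\Gamma:=\langle \sigma_i : i\in\mathbb N\rangle\leq Aut(X,\mathcal B)$, a countable group of Borel automorphisms. If $\gamma\in\Gamma$ then $\gamma$ is a finite word in the $\sigma_i$, and since each $\mathrm{graph}(\sigma_i)\subseteq E$ and $E$ is an equivalence relation, $(x,\gamma x)\in E$ for every $x$; hence $E_X(\Gamma)\subseteq E$. Conversely, if $(x,y)\in E$ with $x\neq y$ then $(x,y)\in\mathrm{graph}(\sigma_i)$ for some $i$, so $y=\sigma_i x$ with $\sigma_i\in\Gamma$; and if $x=y$ then $(x,y)\in E_X(\Gamma)$ via $\mathbb I\in\Gamma$. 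Therefore $E=E_X(\Gamma)$.

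I expect the second step to be the main obstacle. Lusin--Novikov returns partial functions whose ``forward'' and ``backward'' behaviour need not be compatible, so in general no single $f_n$ extends to an automorphism of $X$ (picture a class on which $f_n$ acts as a one-sided shift). Circumventing this forces the extra refinement that makes domains and ranges disjoint, and it is exactly here that a Borel-combinatorial input --- a Borel proper colouring of a bounded-degree Borel graph --- enters; once that is available, the passage to involutions and the verification that $\Gamma$ works are straightforward.
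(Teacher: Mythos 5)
Your argument is correct, but note that the paper offers no proof of this statement at all: it is quoted as a known theorem with a citation to Feldman--Moore, so there is no in-paper argument to compare against. What you have written is essentially the standard modern proof (as in Kechris--Miller): Lusin--Novikov to write $E$ off the diagonal as a countable union of graphs of partial Borel injections, a refinement making domain and range of each piece disjoint, extension of each piece to a Borel involution whose graph sits inside $E$ by symmetry and reflexivity, and then the two inclusions for the generated group. Your one nonstandard choice is at the ``technical heart'': you invoke a Borel proper colouring of a degree-$\le 2$ Borel graph (the Kechris--Solecki--Todorcevic bound), which is a heavier and historically later tool than needed. The classical route is more elementary: fix a countable separating family $\{U_m\}$ of Borel sets and split the domain of a fixed-point-free partial injection $f$ into the pieces $\{x : x\in U_i\setminus U_j,\ f(x)\in U_j\setminus U_i\}$, which cover the domain because $x\neq f(x)$ and automatically have domain and range disjoint. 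Your colouring argument does work --- if the domain of a piece lies in a single colour class, properness forces the range out of that class --- so the proof is sound either way; just be aware you are importing a genuine theorem there, not a triviality.
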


A  Borel set $B$ is a \textit{complete section} for an equivalence relation
$E$ on $(X, \mathcal B)$ if it intersects every $E$-class, i.e., $[B]_E = X$. 
If a complete section intersects each $E$-class exactly once then it is called 
a \textit{Borel transversal}. An equivalence relation $E$ which admits a Borel
transversal is called \textit{smooth.} Equivalently, one can say that 
an equivalence relation  $E$ on a standard Borel space $(X, \B)$ is 
 \textit{smooth} if there is a Borel function $f: X \rightarrow Y$, where $Y$ 
is a standard Borel space, such that $(x,y) \in E \Longleftrightarrow f(x)= 
f(y)$. 
We remark that in this paper we will deal only with \textit{non-smooth} 
CBERs. See \cite{Kechris_2019} for a survey of the state of the art in the theory of 
countable Borel equivalence relations.

\begin{definition}\label{def set of sections} Let $\Gamma$ be a countable  
automorphism group acting on $(X, \B)$. We will denote by 
$\mathcal{C}_\Gamma$ the collection of Borel subsets $C$ such that 
 $C$ and $X \setminus C$ both are complete sections for $E_X(\Gamma)$.
\end{definition}

\textit{Full group of automorphisms.} 
For a countable subgroup $\Gamma$ of $ Aut(X, \mathcal{B})$, we denote 
by $\Gamma x$ the orbit  $\{\gamma x : \gamma \in \Gamma \}$  of $x$ 
with  respect to $\Gamma$. We say that $\Gamma$ is a \textit{free} group
of automorphisms  if $\gamma x \neq x$ for every $\gamma \neq e$ and 
$ x \in X$.

The set 
$$
 [\Gamma] = \{R \in Aut (X,\mathcal{B}) : Rx \in \Gamma x, \ \forall x\in X \}
$$
is called the {\it full group of automorphisms} generated by $\Gamma$. 
The full group generated  by a single automorphism $T \in Aut (X,
 \mathcal{B})$ is denoted by $[T]$. 

Let $\Gamma \subset  Aut (X, \mathcal{B})$ be a freely acting group of 
automorphisms of a standard Borel space $(X, \B)$. Then, for 
every $R \in [\Gamma]$, there exists a Borel function $\gamma_R : X \to 
\Gamma$ such that $R x = \gamma_R (x)x,\ x\in X $. It follows that 
 every $R \in [\Gamma]$ defines a countable partition of $X$ into Borel 
 sets $A_\gamma = \{ x\in X: \gamma_R(x) =\gamma\},\ \gamma\in 
 \Gamma$. Conversely, if $\{A_\gamma\}$ is a Borel partition of $X$, such that 
 $\{\gamma A_\gamma\}$ also constitutes a Borel partition of $X$, then the 
 map $R x =\gamma x$, $x \in A_\gamma$, defines an element of 
 $[\Gamma]$. In case when $\Gamma$ is generated by a single 
 automorphism $T$, the same construction holds, and each $R$ from $[T]$
is represented in terms of piecewise constant function $x \mapsto n_R(x)$.

A countable subgroup $\Gamma$ of $Aut (X, \mathcal{B})$ is called 
\textit{hyperfinite} if $\Gamma x = \bigcup _{i=1} ^\infty \Gamma_i x$ for 
every $x \in X$, where each $\Gamma_i$ is a finite subgroup of 
$[\Gamma]$ and $\Gamma_i \subset \Gamma_{i+1}$ for all $i$.
Equivalently, a countable Borel equivalence relation $E$ is called 
\textit{hyperfinite} if $E = \bigcup_{n} E_n $ where $E_n \subset E_{n+1}$
for all $n$, where each $E_n$ is a finite Borel sub-equivalence relation of 
$E$. 

Let $\Gamma_i$ be  a countable subgroup of 
$Aut(X_i, \mathcal{B}_i)$, $i =1,2$. The groups $\Gamma_1$ and 
$\Gamma_2$ are called \textit{orbit equivalent} (denoted \textit{o.e.}) if 
there exists a Borel isomorphism $\varphi: (X_1,\mathcal{B}_1) \rightarrow 
(X_2, \mathcal{B}_2)$ such that $\varphi \Gamma_1 x = \Gamma_2 
\varphi x$, $\forall x \in X_1$. Equivalently,
$$
\varphi [\Gamma_1]\varphi^{-1} = [\Gamma_2].
$$
If $E_X(\Gamma)$ is the equivalence relation generated by a free action of 
$\Gamma$, then the orbit equivalence of $\Gamma_1$ and $\Gamma_2$ is
equivalent to the isomorphism of $E_{X_1}(\Gamma_1)$ and 
$E_{X_2}(\Gamma_2)$. 

We  refer readers to \cite{DoughertyJacksonKechris_1994} for the classification of hyperfinite aperiodic CBER 
with respect to orbit equivalence. 

\begin{theorem}  [Slaman-Steel \cite{SlamanSteel_1988}, Weiss \cite{Weiss_1984}]\label{thm hyperfinite}
Suppose $E$ is a CBER. The following are equivalent:

$1.$ $E$ is hyperfinite.

$2.$ $E$ is generated by a Borel $\mathbb{Z}$-action.

\end{theorem}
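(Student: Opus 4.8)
The plan is to establish both implications; the substance lies in $(2)\Rightarrow(1)$, while $(1)\Rightarrow(2)$ is a direct construction. For $(1)\Rightarrow(2)$, assume $E=\bigcup_n E_n$ with each $E_n$ a finite Borel equivalence relation, $E_n\subseteq E_{n+1}$, and (prepending the diagonal if necessary) $E_0$ equality. Fix a Borel linear order $<$ on $X$, e.g. pulled back from the lexicographic order on $2^{\N}$ along a Borel injection $X\hookrightarrow 2^{\N}$. Cyclically permuting each (finite) $E_n$-class according to $<$ gives a Borel automorphism realizing $E_n$; I would amalgamate these into a single Borel automorphism $T$ with $E_T=E$ by viewing, on each $E$-class $C$, the nested finite sets $[y]_{E_n}$, $y\in C$, as a forest in which every vertex has finitely many children --- ordered by $<$ --- and a unique parent one level up, and letting $T$ be the associated adic (odometer-type) successor map, patched at the extremal points of each infinite class (at most one point plays the role of ``largest'' and at most one that of ``smallest'') so that the class becomes a single $T$-orbit; a periodic class has a stabilizing filtration and is simply cycled. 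Checking that $T$ is Borel and that $E_T=E$ is routine.

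For $(2)\Rightarrow(1)$, write $E=E_X(T)$ for a single Borel automorphism $T$ and split $X=P\sqcup(X\setminus P)$ with $P=\{x:T^kx=x\text{ for some }k\ge 1\}$. On $P$ every $E$-class is finite, so $E|_P$ is smooth, hence hyperfinite, and since a disjoint union of hyperfinite relations is hyperfinite it suffices to treat the aperiodic part; assume then that every $T$-orbit is infinite. The key ingredient is the marker lemma: for each $n$ the Borel graph joining $x$ to $T^{\pm 1}x,\dots,T^{\pm n}x$ has bounded degree, hence admits a Borel maximal independent set $M_n$ (see \cite{KechrisMiller_2004}), which is a Borel set meeting every $T$-orbit in an $n$-separated, $n$-syndetic subset; consecutive points of $M_n$ along an orbit then lie at $T$-distance between $n+1$ and $2n+1$, so in particular $M_n$ meets each orbit bi-infinitely. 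Telescoping, put $B_0=M_1$ and let $B_{n+1}\subseteq B_n$ be a Borel maximal $2^{n+1}$-separated subset of $B_n$; then $B_{n+1}\subseteq B_n$, each $B_n$ is still a complete section with a uniform bound on the $T$-distance between consecutive $B_n$-points along any orbit (and these gaps are at least $2^n$), and, replacing each $B_n$ by $B_n\setminus\bigcap_m B_m$, we may assume $\bigcap_n B_n=\varnothing$ (the deleted set is Borel and meets each orbit at most once, so bounded gaps and bi-infinite meeting survive). Declaring $x\,E_n\,y$ iff $x$ and $y$ lie in the same orbit-block bounded by consecutive points of $B_n$, each $E_n$ is a finite Borel equivalence relation, $E_n\subseteq E_{n+1}$ because $B_{n+1}\subseteq B_n$ makes each $B_{n+1}$-block a union of $B_n$-blocks, and, writing the $B_n$-block of $x$ as $[l_n(x),r_n(x))$ with endpoints in $B_n$, the sequence $l_n(x)$ is non-increasing and $r_n(x)$ non-decreasing in $n$ with neither eventually constant (a stable endpoint would lie in $\bigcap_n B_n=\varnothing$); hence the blocks increase to the whole orbit, $\bigcup_n E_n=E_X(T)|_{X\setminus P}$, and combined with the periodic part this exhibits $E$ as hyperfinite.

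The main obstacle --- and the only genuinely descriptive-set-theoretic point --- is the marker construction and its telescoping: producing in a Borel way the decreasing sequence $B_n$ of complete sections whose orbit-gaps grow without bound and whose intersection is empty. Once these are in hand, the finiteness and monotonicity of the $E_n$ and the exhaustion argument are routine bookkeeping. One cannot shortcut through a basepoint-in-each-class argument precisely because the non-smooth relations under consideration admit no Borel transversal.
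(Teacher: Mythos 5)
The paper does not prove this statement at all: Theorem \ref{thm hyperfinite} is quoted with attributions to Slaman--Steel and Weiss, so there is no in-paper proof to compare against. Your argument for $(2)\Rightarrow(1)$ is the standard one and is sound: the bounded-degree marker lemma, the telescoped decreasing complete sections $B_n$ with growing gaps, the removal of $\bigcap_m B_m$ (correctly justified as meeting each orbit at most once, since two such points would be $2^m$-separated for every $m$), and the exhaustion argument via the non-stabilizing block endpoints are all correct. It is worth noting that this is essentially the same machinery the paper itself builds in Section \ref{Sec Density} for a different purpose (the vanishing sequence of markers of Lemma \ref{lemma markers} and the Kakutani towers of Remark \ref{Towers} are exactly your $B_n$ and their blocks), so your route is the ``native'' one for this paper.

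The one genuine soft spot is in $(1)\Rightarrow(2)$, at the patching step. Your plan ``send the largest point of a class to the smallest'' silently assumes that an infinite class either has both extremal points or neither. In the adic order an infinite class has order type $\omega$, $\omega^*$, or $\mathbb{Z}$ (never $\omega+\omega^*$, since an $E_n$-class is a finite interval), so the problematic cases are the one-sided ones: a class of type $\omega$ has a minimum but no maximum, the successor map on it is injective but not surjective, and there is nothing within the class to glue. This is not hypothetical --- for the dyadic example, the class of the all-zeros point is of type $\omega$ and that of the all-ones point is of type $\omega^*$, and the classical Vershik gluing merges these two distinct classes, which would change $E$. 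The standard repair is to observe that the union of classes possessing a one-sided extremal point is a Borel set on which $E$ is smooth (the extremal point is a transversal), and to generate $E$ there by a separate, easy $\mathbb{Z}$-action (e.g.\ re-threading the $\omega$-enumeration $c_0,c_1,c_2,\dots$ of such a class into a single bi-infinite orbit), while the genuinely two-sided-infinite classes need no patch because the successor map is already a bijection on them. With that case added, your proof is complete.
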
 

Below we recall the definition of an \textit{odometer} (known also 
as an \textit{adding machine}). There are many papers 
devoted to odometers and their generalizations. We refer the interested
 reader to \cite{GrigorchukNekrashevichSushchanski_2000} and 
 \cite{Nekrashevych_2005} for detailed discussion.

\begin{definition}\label{def odo} Let $\{p_n\}_{n=0}^{\infty}$ be a 
sequence of integers such that $p_n \geq 2$ for each $n$. Let $\Omega =
 \underset{n=0}{\overset{\infty}{\prod}} \{0,...,p_n -1\}$ be equipped with
product discrete topology. Then $\Omega$ is a Cantor set. We define 
$S: \Omega \rightarrow \Omega$ as follows: $S(p_0 -1, p_1 -1,...) = 
(0,0,...)$, and for any other $x \in \Omega$, find the least $k$ such that 
$x_k \neq p_k - 1$ and put $S(x) = (0,0,...,0,x_k +1, x_{k+1}, x_{k+2},...)$. A 
Borel automorphism $T$ is called an \textit{odometer} if it is Borel 
isomorphic to some $S$.  An odometer $S$ is called the $2$-\textit{adic 
odometer}, if $p_n = 2$ for each $n \in \N_0$. In section $\ref{sec 5}$ we 
will work with the $2$-adic odometer. For brevity, we will call it the $2$-
odometer. 

\end{definition}

\subsection{Cocycles of Borel automorphism group}

As above, let $\Gamma$ be a countable subgroup of $Aut(X,\mathcal{B})$ 
acting freely, and let  $G$ denote a locally compact second countable 
abelian group with identity $0$ (we will use the additive group operation).
We remark that the assumption that   $G$ is an abelian  group is made for 
convenience and can be dropped in the following definitions.

\begin{definition}\label{def cocycle} A Borel function $a: \Gamma 
\times X 
\rightarrow G$ is called a \textit{cocycle} over $\Gamma$ if for any 
elements $\gamma_1,\gamma_2 \in \Gamma$ and all  $x \in X$ 
\begin{equation}\label{eq cocyc 1}
a(\gamma_1 \gamma_2,x) = a(\gamma_1, \gamma_2 x) + a(\gamma_2, x)
\end{equation}
and
\begin{equation}\label{eq cocyc 2}
    a(\mathbbm 1, x)= 0
\end{equation}
where $\mathbbm 1$ denotes the identity map.
The set of all cocycles of $\Gamma$ is denoted by $Z^1(\Gamma 
\times X, G)$. 

A cocycle $a: \Gamma \times X \rightarrow G$ is called a 
\textit{coboundary} if there exists a Borel function $c: X \rightarrow G $ 
such that
\begin{equation}\label{eq cbd 1}
a(\gamma, x) = c(\gamma x)- c(x) , \quad \forall \gamma \in \Gamma,   
    \forall x \in X.
\end{equation}
The set of all coboundaries of $\Gamma$ is denoted by $B^1(\Gamma 
\times X, G)$. 

Cocycles $a_1$, $a_2 : \Gamma \times X  \rightarrow G$ are called 
\textit{cohomologous} ($a_1 \sim a_2 $) if their difference is a coboundary, 
i.e., if there exists a Borel function $ c : X \rightarrow G $, such that
\begin{equation}\label{eq cohom 1}
     a_1(\gamma, x) = c(\gamma x)+ a_2(\gamma, x) -c(x).
\end{equation}

\end{definition}

 Sometimes it is useful to define cocycles over an equivalence relation as 
 described below.

\begin{definition}\label{def orbital cocycle} Let $E$ be a CBER. A Borel 
function $u: E \rightarrow G$ is an \textit{orbital cocycle} over $E$ if for 
every $(x,y), (y,z), (x,z) \in E$  
\begin{equation}\label{eq orb c1}
    u(x,z)= u(x,y)+u(y,z).
\end{equation}
An orbital cocycle is a \textit{coboundary} if there exists a Borel function $c: 
X \rightarrow G$ such that for $(x,y) \in E$
\begin{equation}\label{eq Orb cbd}
    u(x,y)= c(x)- c(y).
\end{equation}
As before, two orbital cocycles are \textit{cohomologous} if their difference 
is a coboundary. 
\end{definition}

\begin{remark}\label{R_1} Let $\Gamma$ be a freely acting countable 
 group of automorphisms. Given 
any cocycle $a \in Z^1(\Gamma \times X, G)$, define a function $u_a: 
E_X(\Gamma) \rightarrow G$ by the following rule: for any pair $(y, x)\in 
E_X(\Gamma)$ determine unique $\gamma \in \Gamma$ such that 
$y = \gamma x$ and then set 
\begin{equation}\label{eq orb cohom}
    u_a(y,x) = a(\gamma,x). 
\end{equation}
Since $\Gamma$ is free, $u_a$ is well-defined. It is clear that $u_a$ 
satisfies $(\ref{eq orb c1})$, hence it is an orbital cocycle. Moreover, 
$u_a$ is a coboundary if and only if $a$ is a coboundary. 
Conversely, every  orbital cocycle of $E_X(\Gamma)$ defines a cocycle 
of $\Gamma$.
\end{remark}

\begin{remark}\label{R_a} Let $T$ be an automorphism of $(X, \B)$ 
which determines an action of the group $\mathbb Z$.  Every Borel function 
 $f : X \to G$ with values in the group $G$ defines a cocycle 
$a: \mathbb{Z} \times X \rightarrow G$ by the formula   
\begin{equation}\label{eq z-cocycle}
    a(j,x) = \begin{cases}
f(x)+ f(Tx)+...+f(T^{j-1}x), & \quad j\geq 1 \\ 
0,  & \quad j = 0\\ 
-f(T^{-1}x) - f(T^{-2}x) - ... -f(T^{j}x), & \quad j\leq-1,
\end{cases}    
\end{equation}

Conversely, if  $a : \mathbb Z \times X \to G$ is a cocycle of the group 
$\{T^n, n \in \mathbb{Z}\}$, then it is completely determined by the
function $f(x) = a (1, x)$. Moreover, the properties of the cocycle $a(j, x)$
are represented in terms of the function $f$.
\end{remark}

\begin{remark} If $a : \Gamma \times X \to G$ is a cocycle of a freely 
acting countable group of automorphisms $\Gamma$, then it can be 
extended  to a cocycle $\widehat a$ over the full group $[\Gamma]$.
 Indeed, for 
$R \in [\Gamma]$ take the uniquely determined function $x \mapsto
 \gamma(x)$ such that $Rx = \gamma(x) x$. Then we set 
$$
\widehat a(R, x) = a(\gamma(x), x), \quad x\in X.
$$
It can be easily seen that $\widehat a$ coincides with $a$ on elements of
the group $\Gamma$, and $\widehat{a}$ satisfies the cocycle identity
\eqref{eq cocyc 1} and \eqref{eq cocyc 2}.
\end{remark}

\subsection{Topologies on the group $Aut(X, \B)$}
We will need the notion of convergence of a sequence of Borel 
automorphisms. Recall that 
several topologies on $Aut(X,\mathcal{B})$ were defined and studied in
\cite{BezuglyiDooleyKwiatkowski_2006}. We will work with the so called uniform topology $\tau$
whose origin lies in ergodic theory (see Introduction for the definition of
 $\mathcal M_1(X)$ and $E(S,T)$).

\begin{definition}\label{def topology} The \textit{uniform topology} 
$\tau$, on $Aut(X,\mathcal{B})$ is defined by the base of neighborhood $
\mathcal{V} = \{V(T; \mu_1,...,\mu_n; \epsilon)\}$ where, $ T\in Aut(X,
\mathcal{B}) $, $ \mu_1,...,\mu_n \in {\mathcal{M}}_1(X)$, $\epsilon>0 $, 
and
\begin{equation}\label{eq top t}
V(T; \mu_1,...,\mu_n; \epsilon)= \{ S\in Aut(X,\mathcal{B})\ |\ \mu_i(E(S,T))<\epsilon ,\ i=1,...,n\}.
\end{equation}

\end{definition}

\begin{remark}\label{top} It can be seen that  $(Aut(X,\mathcal{B}), \tau)$ 
is a Hausdroff, topological group. It is also relevant to mention that topology 
$\tau$ coincides with the topology $\tau'$, which is defined by the base of 
neighborhood $\mathcal{V'} = \{V'(T;\mu_1,...,\mu_n; \epsilon)\}$ where, 
$ T\in Aut(X,\mathcal{B}) $, $ \mu_1,...,\mu_n \in {\mathcal{M}}_1(X)$, $
\epsilon>0 $, and
\begin{equation}\label{eq top t'}
V'(T; \mu_1,...,\mu_n; \epsilon )= \{ S\in Aut(X,\mathcal{B}) \ |\ \sup_{F\in {\mathcal{B}}}\mu_i(TF\ \Delta \ SF)<\epsilon,\ i=1,...,n\}.
\end{equation}

\end{remark}

\section{Topologies on the space of cocycles}\label{sect topologies}

For a standard Borel space $(X, \B)$ and an abelian l.c.s.c. group $G$, 
we denote by  $\mathcal{F}(X, G)$  the set 
of  Borel functions $f:X \rightarrow G $. Clearly, this set is an abelian group under 
pointwise addition of functions. We will write simply $\mathcal{F}$ when $X$ and 
$G$ are understood. Since $G$ is  metrizable, we will 
denote by $\left| \ \cdot \ \right| $ a translation invariant metric on $G$
 compatible with the topology on $G$. 

In this section we will define and study 
topologies on $\mathcal{F}(X, G)$ which are analogous to the topology of  
convergence in measure. For a countable group of Borel 
automorphisms $\Gamma \subset Aut(X,\B)$, we will  consider the
subgroups  of  cocycles and coboundaries in 
$\mathcal{F}(X, G)$. Our goal
is to show that, for a hyperfinite group $\Gamma$, coboundaries form 
a dense subgroup in the group of all cocycles.

\begin{remark}\label{orbit eq} Let $\Gamma$ be a hyperfinite countable 
subgroup of $Aut(X,\mathcal{B})$. Without loss of generality, we can assume 
that $\Gamma$ acts freely. Then $\Gamma$ is orbit equivalent to a Borel 
$\mathbb{Z}$-action, i.e., there exists an automorphism 
$T \in Aut(X, \mathcal{B})$, such that the orbits $\Gamma(x)$ coincide 
with those of the group $\{T^n x, n \in \mathbb{Z}\}$. For any two
 orbit 
equivalent automorphism groups, their groups of cohomology are 
isomorphic
(see Proposition \ref{top group isomorphism} below). 
This means that, studying cocycles of $\Gamma$, it suffices to work with
 cocycles  of the group $\{T^{n} : n \in \mathbb{Z}\}$. The benefit of
this fact  is that we can explicitly write down the formula for $\Z$-
cocycles as
in  (\ref{eq z-cocycle}). Hence (as was mentioned above), every 
cocycle $a: \mathbb{Z} \times X \rightarrow G$ of 
$\{T^n, n \in \mathbb{Z}\}$ is represented by  a Borel function from 
$X$ to $G$. 
\end{remark}

In the following definition, we discuss several topologies on $\mathcal{F}(X, G)$ 
which are analogous to the topology defined by convergence of measure. 

\begin{definition}\label{def cocycle topology} The  topologies $\tau_1,
\tau_2$, $\tau_3$, and $\tau_4$ on $\mathcal{F}(X, G)$ are defined by 
their bases of neighborhoods $\mathcal{U}$, $\mathcal{U'}$, $\mathcal{W}$ 
and $\mathcal{W'}$, respectively, where
$\mathcal{U} = \{U(f; \mu_1,...,\mu_n; \epsilon,\delta)\}$, $\mathcal{U'} = 
\{U'(f; \mu_1,...,\mu_n; \epsilon)\}$, $\mathcal{W} = \{W(f; \mu_1,...,
\mu_n; \epsilon)\}$, $\mathcal{W'} = \{W'(f; \mu_1,...,\mu_n; \epsilon) \}
$, and
\begin{equation}\label{eq t_1}
    U(f; \mu_1,...,\mu_n; \epsilon,\delta)  :=  \{g \in \mathcal{F} : \mu_i ( \{x: |f(x) - g(x)| > \epsilon \} ) < \delta, \forall i = 1,..,n\}, 
\end{equation}
\begin{equation}\label{eq t_2}
   U'(f; \mu_1,...,\mu_n; \epsilon)  :=  \{g \in \mathcal{F} : \mu_i ( \{x: |f(x) - g(x)| > \epsilon \} ) < \epsilon, \forall i = 1,..,n\} ,
\end{equation}
\begin{equation}\label{eq t_3}
    W(f; \mu_1,...,\mu_n; \epsilon)  :=  \{g \in \mathcal{F} :  \int_{X}  \min \, (|f(x)-g(x)|,\, 1)\, d\mu_{i} < \epsilon, \forall i = 1, ...,n \},
\end{equation}
\begin{equation}\label{eq t_4}
    W'(f; \mu_1,...,\mu_n; \epsilon)  :=  \{g \in \mathcal{F} : \int_{X} \frac{|f(x)-g(x)|}{1 + |f(x)-g(x)|} \, d\mu_{i} < \epsilon, \forall i = 1, ...,n \}. 
\end{equation}
In the above definitions, we take  $f \in \mathcal{F}(X, G)$,  $\mu_1, ..,\mu_n \in \mathcal{M}_1(X)$,  $\epsilon, \delta > 0$, and $n \in 
\mathbb{N}$.
\end{definition}

\begin{theorem}\label{thm equivalent topologies} All the topologies 
$\tau _1$, $\tau _2$, $\tau _3$, and $\tau _4$ from Definition 
\ref{def topology} coincide on the  group $\mathcal F(X, G)$.
\end{theorem}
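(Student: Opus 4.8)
The plan is to show the four topologies coincide by proving a cycle of inclusions among the neighborhood bases at an arbitrary point $f$; by translation invariance it suffices to work at $f=0$, so the task reduces to comparing the four families of sets
\[
\{g : \mu(\{|g|>\epsilon\})<\delta\},\quad \{g : \mu(\{|g|>\epsilon\})<\epsilon\},\quad \Big\{g : \int \min(|g|,1)\,d\mu<\epsilon\Big\},\quad \Big\{g : \int \tfrac{|g|}{1+|g|}\,d\mu<\epsilon\Big\}
\]
for a single measure $\mu$ (the passage from one measure to finitely many is automatic since all bases allow an arbitrary finite list $\mu_1,\dots,\mu_n$). So the core is four one-measure, one-variable estimates of the classical "convergence in measure $=$ convergence of a truncated integral" type, applied to the nonnegative Borel function $h(x)=|g(x)|$.

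First I would record the two elementary pointwise inequalities that drive everything: for any real $t\ge 0$,
\[
\tfrac12\min(t,1)\ \le\ \tfrac{t}{1+t}\ \le\ \min(t,1),
\]
which instantly gives $\tau_3=\tau_4$ (each $W$-ball contains a $W'$-ball of comparable radius and vice versa). Next, to compare $\tau_2$ with $\tau_3$ I would use Chebyshev in one direction — if $\int\min(|g|,1)\,d\mu<\epsilon^2$ (say), then $\mu(\{\min(|g|,1)>\epsilon\})<\epsilon$ and on that complement $|g|\le\epsilon$, so $\mu(\{|g|>\epsilon\})<\epsilon$, placing $g$ in the $U'$-ball — and the trivial bound $\int\min(|g|,1)\,d\mu\le \epsilon+\mu(\{|g|>\epsilon\})$ in the other direction: if $\mu(\{|g|>\epsilon'\})<\epsilon'$ with $\epsilon'$ small then the integral is at most $2\epsilon'$. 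Finally $\tau_1$ versus $\tau_2$: a $U'$-ball is already a $U$-ball (take $\delta=\epsilon$), and conversely given a target $U'$-ball of radius $\epsilon$ one sits inside $U(0;\mu;\epsilon,\epsilon)$, so the two generate the same topology. Chaining these gives $\tau_1=\tau_2=\tau_3=\tau_4$.

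The main thing to be careful about — more a bookkeeping nuisance than a genuine obstacle — is that "coincide as topologies" means each basic neighborhood of each topology must contain a basic neighborhood of each other topology \emph{at the same point}; one must therefore track the radii explicitly (e.g. a $\tau_3$-neighborhood of radius $\epsilon$ contains a $\tau_2$-neighborhood of radius roughly $\sqrt{\epsilon}$, and so on), and verify that the quantifier structure "$\exists$ finite list of measures, $\exists$ radius" is preserved, which it is because I only ever use the \emph{same} measures on both sides. I would also note at the outset that $\min(|f-g|,1)$ and $\tfrac{|f-g|}{1+|f-g|}$ are Borel functions bounded by $1$, hence $\mu$-integrable for every $\mu\in\mathcal M_1(X)$, so all four families are well defined; and that translation invariance of the metric $|\cdot|$ on $G$ is what lets us reduce to $f=0$ (it guarantees $|f(x)-g(x)|=|(f-h)(x)-(g-h)(x)|$, so each basic set is a translate of the corresponding one based at $0$). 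With these observations in place the proof is just the four estimates above assembled into the inclusion cycle $\tau_1\subseteq\tau_2\subseteq\tau_3=\tau_4\subseteq\tau_1$ (in the sense of neighborhood bases), and I would present it as such.

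I should also flag the apparent typo in the statement — it refers to "Definition \ref{def topology}" but the four topologies are introduced in Definition \ref{def cocycle topology}; in writing the proof I would silently treat $\tau_1,\dots,\tau_4$ as the objects of Definition \ref{def cocycle topology}, since those are the only $\tau_i$ that have been defined.
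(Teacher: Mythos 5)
Your proof is correct, and the underlying analytic content (Chebyshev-type estimates relating the measure of a level set to a truncated integral) is the same as in the paper, but your decomposition is genuinely different and somewhat more economical. The paper uses $\tau_1$ as a hub: it proves $\tau_1=\tau_2$, $\tau_1=\tau_3$, and $\tau_1=\tau_4$ separately, and the last comparison goes through the set identity $\{|f-g|>\epsilon\}=\{\tfrac{|f-g|}{1+|f-g|}>\tfrac{\epsilon}{1+\epsilon}\}$ together with two integral estimates. You instead chain $\tau_1=\tau_2$, $\tau_2=\tau_3$, $\tau_3=\tau_4$, and dispose of the last link with the two-sided pointwise bound $\tfrac12\min(t,1)\le\tfrac{t}{1+t}\le\min(t,1)$, which makes the two integrands comparable up to a factor of $2$ and renders $\tau_3=\tau_4$ immediate; this is cleaner than the paper's route and is the one place where your argument buys a real simplification. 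Your reduction to $f=0$ via translation invariance of the metric, and your observation that the single-measure estimates propagate to finite lists of measures because the radius conversions are uniform in the measure, are both valid and are implicit in the paper. Two small points of hygiene: in the Chebyshev step you should note that one may assume $\epsilon<1$ (so that $\{|f-g|>\epsilon\}$ coincides with $\{\min(|f-g|,1)>\epsilon\}$), and in the $\tau_1$ versus $\tau_2$ comparison the clean statement of the nontrivial direction is $U'(f;\mu_1,\dots,\mu_n;\min(\epsilon,\delta))\subset U(f;\mu_1,\dots,\mu_n;\epsilon,\delta)$, which is what the paper's two-case argument amounts to. You are also right that the theorem's reference to Definition \ref{def topology} is a typo for Definition \ref{def cocycle topology}.
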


\begin{proof}
 For the entire proof, we assume that $i \in \{1,2,....,n\}$. 
 Also note that the notation $\tau_j \subset \tau_k$,  for topologies 
 $\tau_j, \tau_k$, $j,k \in \{1,2,3,4\}$ , $j \neq k$, means that $\tau_k$ 
is stronger than $\tau_j$. Because our topologies are determined in terms 
of the bases of neighborhoods, it suffices to check that the base for 
$\tau_k$ contains that for $\tau_j$.  For example, $\tau_1 \subset
 \tau_2$, implies that for every $f \in \mathcal{F}(X, G)$ and  a base
 element $U(f; \mu_1,...,\mu_n; \epsilon,\delta)$ of $\tau_1$ containing 
 $f$, there exists a base element $U'(f; \mu_1,...,\mu_n; \kappa)$ of 
 $\tau_2$ such that $U'(f; \mu_1,...,\mu_n; 
 \kappa)  \subset U(f; \mu_1,...,\mu_n; \epsilon,\delta)$.
\medskip

(1) \textit{$\tau_1$ coincides with $\tau_2$ on $\mathcal{F}(X,G)$}: 

Clearly, for $\delta = \epsilon $, we have $\tau_2 \subset \tau_1$.  To
 prove the converse, we will show, as mentioned above, that for a base
element $U(f;\mu_1,...,\mu_n;\epsilon,\delta) \in \mathcal{U}$, there 
exists a base element  $U'(f; \mu_1,...,\mu_n; \kappa) \in \mathcal{U'}$ 
 such that $U'(f; \mu_1,...,\mu_n; \kappa) \subset U(f;\mu_1,...,\mu_n;
 \epsilon,\delta)$. 

If $0< \epsilon < \delta $, take $\kappa = \epsilon$, and we are done, since 
for $\epsilon < \delta $, $U'(f; \mu_1,...,\mu_n; \epsilon) \subset U(f;
\mu_1,...,\mu_n;\epsilon,\delta)$.

Now assume that $0 < \delta < \epsilon$. Then take
$\kappa = \delta$ and show that $U'(f; \mu_1,...,\mu_n; \delta) \subset U(f;\mu_1,...,\mu_n;
 \epsilon,\delta)$. To see this, take any function  $g \in U'(f; \mu_1,...,\mu_n; \delta)$ and note that $0 < \delta < \epsilon $ implies
\begin{equation*}
    \{x: |f(x) - g(x)| > \epsilon \} \subset \{x: |f(x) - g(x)| > \delta \}.
\end{equation*}
Thus for all $i$, we have
\begin{equation*}
    \mu_i ( \{x: |f(x) - g(x)| > \epsilon \} ) \leq \mu_i ( \{x: |f(x) - g(x)| > \delta \} ) < \delta.
\end{equation*}
Hence  $g \in U(f;\mu_1,...,\mu_n;\epsilon,\delta)$ as needed. 
\medskip 

(2)  \textit{$\tau_1$ coincides with $\tau_3$ on $\mathcal{F}(X,G)$} : 

First we show that  $\tau_3 \subset \tau_1$.  We need to verify that,
for any neighborhood $W(f; \mu_1,...,\mu_n; \epsilon) \in \mathcal{W}$, 
there exists a neighborhood $U(f; \mu_1,...,\mu_n; \epsilon',\delta) \in 
\mathcal{U}$ such that $U(f; \mu_1,...,\mu_n; \epsilon',\delta) \subset 
W(f; \mu_1,...,\mu_n; \epsilon)$. 

To see this, let $\epsilon' = \epsilon/ 4 $, and consider $g \in U(f; \mu_1,...,
\mu_n; \epsilon',\delta)$, where $\delta > 0$ will be chosen later. Then,
 for all $i$, we have
\begin{equation}\label{eq 3a}
    \mu_i ( \{x: |f(x) - g(x)| > \epsilon/4 \} ) < \delta .
\end{equation}
We will prove that 
\begin{equation}\label{eq 3b}
    \int_{X}  \min \, (|f(x)-g(x)|,\, 1)\; d\mu_{i} < \epsilon .
\end{equation}
Choose a Borel set $B$ such that 
$$
\min \, (|f(x)-g(x)|,\, 1) = \left\{
\begin{array}{ll}
    |f(x)-g(x)|, & \mbox{$x\in B$}\\
    \\
    1, & \mbox{$x\in X\setminus B.$}
\end{array}
\right.
$$
Define $Q = \{x \in B : |f(x) - g(x)| > \epsilon/4 \} $. Then, for all $i$,
\begin{equation*}
    \int_{B} |f-g| \, d\mu_{i} = \int_{Q} |f-g| \, d\mu_{i} + \int_{B\setminus Q} |f-g|\, d\mu_{i}.
\end{equation*}
Choose $\delta > 0$, sufficiently small such that the condition $\mu_i(Q) < \delta$ implies 
$$
\int_{Q} |f-g| \, d\mu_{i} < \epsilon/4.
$$ 
For $x \in B\setminus Q$, we have $|f-g| \leq \epsilon/4 $. Since every $\mu_i$ is a probability measure, we obtain
\begin{equation}\label{eq 3c}
    \int_{B\setminus Q} |f-g| \, d\mu_{i} < (\epsilon/4) \, \mu_i (B\setminus Q) < \epsilon/4 .
\end{equation}
Thus, for all $i$, we see that
\begin{equation}\label{eq 3c'}
    \int_{B}  \min \, (|f(x)-g(x)|,\, 1)\, d\mu_{i} < \epsilon/2 .
\end{equation}
Using $(\ref{eq 3a})$ and choosing $\epsilon/4 < 1$ and $\delta < \epsilon/2$ we get  $\mu_i (X \setminus B) < \epsilon/2 $.
Therefore, for all $i$, the following inequality holds
\begin{equation}\label{eq 3c''}
    \int_{X\setminus B}  \min \, (|f(x)-g(x)|,\, 1)\, d\mu_{i} < \epsilon/2 .
\end{equation}
Relations  $(\ref{eq 3c'})$ and $(\ref{eq 3c''})$ imply  $(\ref{eq 3b})$. 
This completes the proof of $\tau_3 \subset \tau_1$.  

Now we prove that $\tau_1 \subset \tau_3$.  We show that, for a base 
element $U(f; \mu_1,...,\mu_n; \epsilon,\delta)$ \newline 
$ \in \mathcal{U}$, there exists a base element $W(f; \mu_1,...,\mu_n; 
\kappa) \in \mathcal{W}$ such that $W(f; \mu_1,...,\mu_n; \kappa) 
\subset U(f; \mu_1,...,\mu_n; \epsilon,\delta)$. 

For this, let $\kappa = \epsilon \delta $ and let $g \in W(f; \mu_1,...,\mu_n; 
\kappa) $. Then for all $i$, we get 
\begin{equation}\label{eq 3d}
    \int_{X}  \min \, (|f-g|, \, 1) \, d\mu_{i} < \epsilon \delta .
\end{equation}
 Assume, toward a contradiction, that $g \notin U(f; \mu_1,...,\mu_n;
  \epsilon,\delta)$, i.e., 
\begin{equation}\label{eq 3e}
    \mu_i(\{x : |f(x)- g(x)| > \epsilon \}) \geq \delta. 
\end{equation}
 Denote  $P = \{x : |f(x)- g(x)|> \epsilon \}$; then for all $i$,
\begin{equation*}
    \int_{X} |f-g| \, d\mu_{i} \geq \int_{P} |f-g| \, d\mu_{i} \geq \epsilon 
    \delta.  
\end{equation*}
which contradicts $(\ref{eq 3d})$. Hence, we conclude that  $g \in U(f; 
\mu_1,...,\mu_n; \epsilon,\delta)$ as needed. 
\medskip

(3) \textit{$\tau_1$ coincides with $\tau_4$ on $\mathcal{F}(X,G)$}: 

To see this, let $K = \{x: |f(x) - g(x)| > \epsilon \}$ and note  that the 
equality
\begin{equation}\label{eq 3f}
     \{x: |f(x) - g(x)| > \epsilon \} =  \{x: \frac{|f(x) - g(x)|}{1 + |f(x) - g(x)|} > \frac{ \epsilon}{1 + \epsilon} \} : = K
\end{equation}
holds.  We first show that $\tau_1 \subset \tau_4$. Let  
$U(f; \mu_1,...,\mu_n; \epsilon,\delta) \in \mathcal{U}$ be a neighborhood
from $\tau_1$. Show that  there exists a neighborhood
 $W'(f; \mu_1,...,\mu_n; \kappa) \in \mathcal{W'}$  such that 
 $W'(f; \mu_1,...,\mu_n; \kappa) \subset U(f; \mu_1,...,\mu_n; \epsilon,
 \delta)$. 

Let $\kappa = \dfrac{\epsilon \delta}{1+ \epsilon}$ and let $g \in W'(f; 
\mu_1,...,\mu_n; \kappa)$, then 
\begin{equation*}
   \int_{X} \frac{|f(x)-g(x)|}{1 + |f(x)-g(x)|} d\mu_{i} < \frac{\epsilon \delta}
   {1+ \epsilon}, \quad \forall i.
\end{equation*}
Relation $(\ref{eq 3f})$ implies 
\begin{equation*}
    \frac{ \epsilon}{1 + \epsilon} \chi_{K} < \frac{|f(x)-g(x)|}{1 + |f(x)-g(x)|} 
    \chi_{K} < \frac{|f(x)-g(x)|}{1 + |f(x)-g(x)|}   
\end{equation*}
Hence,
\begin{equation*}
    \mu_i(K) < \frac{1+ \epsilon}{\epsilon} \int_{X}\frac{|f(x)-g(x)|}{1 + |
    f(x)-g(x)|} \,d\mu_i < \delta.
\end{equation*}
which implies that $g \in U(f; \mu_1,...,\mu_n; \epsilon,\delta)$. 
\medskip

It remains to prove that $\tau_4 \subset \tau_1$.  Show that
for a neighborhood $W'(f; \mu_1,...,\mu_n; \epsilon) \in \mathcal{W}$,
 there exists a basis element $U(f; \mu_1,...,\mu_n; \epsilon,\delta) \in 
 \mathcal{U}$ such that $U(f; \mu_1,...,\mu_n; \epsilon,\delta) 
\subset W'(f; \mu_1,...,\mu_n; \epsilon)$. 
Take a function $g \in U(f; \mu_1,...,\mu_n; \epsilon,\delta)$, then 
\begin{equation*}
\mu_i(K) = \mu_i(\{x: |f(x) - g(x)| > \epsilon \}) < \delta.
\end{equation*}
 Choose $\delta$ such that $\delta < \dfrac{\epsilon^2}{1+\epsilon}$, then 
 we obtain for each measure $\mu_i$ 
\begin{equation*} 
\begin{split}
\int_{X}\frac{|f(x)-g(x)|}{1 + |f(x)-g(x)|} \, d\mu_i  & \leq \int_{K}\frac{|f(x)-g(x)|}{1 + |f(x)-g(x)|} \, d\mu_i + \int_{K^c}\frac{|f(x)-g(x)|}{1 + |f(x)-g(x)|} \, d\mu_i  \\
 & \leq \mu_i(K) + \frac{\epsilon}{1+ \epsilon} \mu_{i} (K^c)  \\
 & < \delta + \frac{\epsilon}{1+ \epsilon} < \epsilon.
\end{split}
\end{equation*}

Thus, $g \in W'(f; \mu_1,...,\mu_n; \epsilon)$ as needed. 
\end{proof}

\begin{remark}\label{equiv top}Since the topologies $\tau _1$, $\tau _2$, 
$\tau _3$ and $\tau _4$, on $\mathcal{F}(X, G)$ coincide, we will will use 
the notation $\mathcal{T}$ to denote them.
\end{remark}
 
\begin{theorem}\label{thm sep top group} $\mathcal{F}(X, G)$ is separable 
Hausdorff topological group with respect to the topology $\mathcal{T}$.
\end{theorem}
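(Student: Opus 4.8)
The plan is to establish the three assertions — topological group, Hausdorffness, separability — in turn, the last being the only non-formal point. For the topological group structure I would first note that each of the four defining bases in Definition~\ref{def cocycle topology} depends on a pair of functions only through $x\mapsto|f(x)-g(x)|$, so that $U(f;\mu_1,\dots,\mu_n;\epsilon,\delta)=f+U(0;\mu_1,\dots,\mu_n;\epsilon,\delta)$ and similarly for the other bases; hence every translation of $\mathcal F(X,G)$ is a homeomorphism, and since the group is abelian it suffices to check continuity of addition and of negation at the zero function. Working with the description $\tau_3$ (permissible by Theorem~\ref{thm equivalent topologies}), the pointwise estimate $\min(|f(x)+g(x)|,1)\le\min(|f(x)|,1)+\min(|g(x)|,1)$ — which follows from the triangle inequality for the translation-invariant metric $|\cdot|$ together with subadditivity of $t\mapsto\min(t,1)$ on $[0,\infty)$ — yields $W(0;\mu_1,\dots,\mu_n;\epsilon/2)+W(0;\mu_1,\dots,\mu_n;\epsilon/2)\subseteq W(0;\mu_1,\dots,\mu_n;\epsilon)$, so addition is continuous; and $|-x|=|x|$ gives $-W(0;\mu_1,\dots,\mu_n;\epsilon)=W(0;\mu_1,\dots,\mu_n;\epsilon)$, so negation is continuous.

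For Hausdorffness: given $f\ne g$, choose $x_0$ with $f(x_0)\ne g(x_0)$ and put $c=|f(x_0)-g(x_0)|>0$; the Dirac measure $\delta_{x_0}$ lies in $\mathcal M_1(X)$, and in the $\tau_1$-description a function $h$ belongs to $U(f;\delta_{x_0};c/3,1/2)$ precisely when $|h(x_0)-f(x_0)|\le c/3$, so $U(f;\delta_{x_0};c/3,1/2)$ and $U(g;\delta_{x_0};c/3,1/2)$ are disjoint by the triangle inequality. (Alternatively this shows the space is $T_1$, and a $T_1$ topological group is automatically Hausdorff.)

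Separability is where the work lies. The plan is to fix once and for all a countable subalgebra $\mathcal A\subseteq\mathcal B$ with $\sigma(\mathcal A)=\mathcal B$ — for instance the algebra generated by a countable basis of the Polish space $X$ — and a countable dense subset $D\subseteq G$, and then to take $\mathcal S$ to be the countable collection of simple functions $\sum_{k=1}^m d_k\chi_{A_k}$ with $m\in\mathbb N$, $d_k\in D$, and $A_1,\dots,A_m\in\mathcal A$ forming a partition of $X$. To see $\mathcal S$ is $\mathcal T$-dense, take $f\in\mathcal F(X,G)$ and a basic $\tau_3$-neighbourhood $W(f;\mu_1,\dots,\mu_n;\epsilon)$; setting $\mu=\frac1n\sum_{i=1}^n\mu_i$ one has $\mu_i\le n\mu$ for every $i$, so it is enough to find $g\in\mathcal S$ with $\int_X\min(|f-g|,1)\,d\mu<\epsilon/n$. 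This reduces matters to a single probability measure, where it is the familiar density of simple functions in the topology of convergence in measure: cover $G$ by the balls $B(d,\eta)$ ($d\in D$), disjointify them, pull them back by $f$ to obtain disjoint Borel sets $E_j$ with $|f-d_j|<\eta$ on $E_j$, truncate to finitely many indices $j\le N$ at a cost of at most $\mu\big(\bigcup_{j>N}E_j\big)$ (boundedness of $\min(|\cdot|,1)$ is exactly what lets the tail of the possibly noncompact group $G$ be absorbed), then replace each of $E_1,\dots,E_N$ by a member of $\mathcal A$ up to arbitrarily small $\mu$-measure — the sets admitting such approximation form a $\sigma$-algebra containing $\mathcal A$, hence all of $\mathcal B$ — and finally disjointify the approximants inside $\mathcal A$. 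Choosing $\eta$ first, then $N$, then the approximation tolerance, drives the total error below $\epsilon/n$, producing the required $g\in\mathcal S$.

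The one genuinely non-routine step is this last one, and the subtlety to watch is the gap between ``separable for each fixed measure'' and ``separable'' outright. The resolution is the convexity remark $\mu_i\le n\big(\frac1n\sum_j\mu_j\big)$, which collapses a basic neighbourhood — a priori sensitive to an arbitrary finite family of measures — to a single-measure estimate; after that, the approximation of an arbitrary $G$-valued Borel function by the countable family $\mathcal S$ is standard measure theory, needing only the usual care with approximating Borel sets by a fixed countable algebra and with controlling values of $f$ lying far out in $G$.
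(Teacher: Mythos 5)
Your proof is correct, and while the topological-group and Hausdorff parts coincide with the paper's (the same $\tau_3$ inclusions $W(0;\epsilon/2)+W(0;\epsilon/2)\subseteq W(0;\epsilon)$, $-W(0;\epsilon)=W(0;\epsilon)$, and the same Dirac-measure separation), your separability argument takes a genuinely different route. The paper fixes the same countable dense set -- simple functions with values in a countable dense subgroup $G_0$ supported on a countable basis -- but establishes density by taking a sequence of simple functions converging pointwise to $f$, applying Egoroff's theorem $n$ times (once per measure $\mu_1,\dots,\mu_n$) to extract a common set of near-full measure on which the convergence is uniform, and then approximating the level sets of the simple functions first by open sets and then by finite unions of basis elements. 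You instead collapse the $n$ measures into one at the outset via the domination $\mu_i\le n\big(\tfrac1n\sum_j\mu_j\big)$, which turns the basic $\tau_3$-neighbourhood into a single-measure integral estimate, and then run the standard single-measure approximation: partition $G$ into small pieces centred at a countable dense set, pull back by $f$, truncate the tail (using that the integrand $\min(\cdot,1)$ is bounded), and approximate the resulting Borel sets by a countable generating algebra via the good-sets argument. Your averaging trick is the cleaner way to handle the finite family of measures and lets you avoid Egoroff entirely; the paper's iterated-Egoroff route stays closer to the classical convergence-in-measure picture but is longer and, as written, somewhat harder to follow. Both arguments are sound and produce the same countable dense family.
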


\begin{proof} We denote by $\mathcal{A} = \{B_i\}_{i \in \mathbb{N}}$
the countable base for the space $X$ which generates $\B$. Recall that
$G$ is an abelian l.c.s.c group with identity $0$. Let $G_0$ be a countable
dense subgroup of $G$. Denote by $\alpha_i \chi_{B_i}$ a 
function  $X\to G $ which takes the value $\alpha_i \in G$ 
on the set $B_i$ and is $0$ everywhere else. Note that we refrain from 
using the term ``characteristic function'' as $G$ is an additive abelian group 
with identity $0$ but the notion of multiplicative identity is not defined. 

Consider the set $\mathcal{S}(X, G_0)$ of all finite linear combinations of
 such constant functions with values in $G_0$, i.e., they can be 
 described as piecewise constant functions that take values from $G_0$ 
on sets from the family $\mathcal A$ and are zero everywhere
else. We will call elements of $\mathcal{S}(X, G_0)$ simple functions.

For notational purpose, we will denote such a function as follows
\begin{equation*}
 f(x)= \sum_{l=1}^p  \alpha_l \chi_{B_l} (x)
\end{equation*}
where $\alpha_l \in G_0$ and $B_l \in \mathcal{A}$ for $l = 1,2,...,p$.

We first observe that the set  $\mathcal{S}(X, G_0)$ is a 
countable subset of $\mathcal{F}(X, G)$. In what follows we will show 
that $\mathcal{S}(X, G_0)$ is dense in $\mathcal{F}(X,G)$ with respect to
the topology $\mathcal{T}$. 

For $f \in \mathcal{F}(X, G)$, consider a neighborhood of $f$ 
\begin{equation*}
    U(f; \mu_1,...,\mu_n; \epsilon,\delta)  =  \{g \in \mathcal{F} : \mu_i 
    (\{x: |f(x) - g(x)| > \epsilon \} ) < \delta, \forall i = 1,2,..,n\} 
\end{equation*}
where $\mu_1, ... , \mu_n \in \mathcal{M}_1(X)$. To prove the result, it
suffices to find an element from the set $\mathcal{S}(X, G_0)$ in  $U(f; 
\mu_1,...,\mu_n; \epsilon,\delta)$. 

Since $f \in \mathcal{F}(X, A)$ is a Borel function, there exists a sequence $
\{s_j\}_{j \in \mathbb{N}}$ of simple function taking value in $G_0$ which 
converges pointwise to $f$. Again using the same notation as above we 
denote $s_j$ as follows 
\begin{equation*}
    s_j = \sum_{k=1}^{m} \alpha_{k,j} \chi_{E_{k,j}}, \quad  j \in 
   \mathbb{N}.
\end{equation*}
where $\alpha_{k,j} \in G_0$, $ \forall k = 1,2,...,m$ and $E_{k,j} = 
\{x \in X : s_j(x)= c_{k,j}, \forall k\}$.

For the measure $\mu_1$, we use Egoroff's theorem and find a Borel set
 $F_1 \in X$ such that $s_j \rightarrow f$ uniformly on $F_1$, and 
$\mu_1(X \setminus F_1) < \dfrac{\delta}{n}$. (Note that this convergence 
is uniform  in the usual sense: for $\epsilon > 0$ there exists $N_1 \in 
\mathbb{N}$, such that for all $ j > N_1$ and for all $x \in F$, 
$|f(x)- s_j(x)| < \epsilon$). Similarly, there exists a Borel set $F_2 \subset 
F_1$ such that the sequence $(s_j)$ converges  uniformly to $f$ on 
$F_2$, and $\mu_2(F_1 \setminus F_2) < \dfrac{\delta}{n}$. 
Repeating this process $n$ times we obtain a Borel set $F \subset X$ 
such that the convergence  $s_j \rightarrow  f$ is uniform on $F$, and, for 
$i = 1,2,..,n$, we have $\mu_i(X \setminus F) <\delta$. Hence for any 
$\epsilon > 0 $ one can find some  $N \in \mathbb{N}$ such that 
$|f(x)- s_t(x)| < \epsilon$ for $t > N$ and $x \in F$. 
 In other words, $\mu_i ( \{x: |f(x) - s_t(x)| > \epsilon \} ) < \delta\}$, 
 $i = 1, 2, ...,n$.

This implies that, for $t> N$, the functions $s_t = \sum_{k=1}^{m} 
\alpha_{k,t} \chi_{E_{k,t}} $ belong to $U(f; \mu_1,...,\mu_n; \epsilon,
\delta) $.  Since this is true for any $\delta > 0$, choose $N$ such that 
for $t \in N$ 
we have $s_t \in U(f; \mu_1,...,\mu_n; \epsilon,\dfrac{\delta}{q})$, where $q$ is a positive integer to be chosen later. It follows that
\begin{equation*}
    \mu_i ( \{x: |f(x) - s_t(x)| > \epsilon \} ) < \frac{\delta}{q}, \quad 
    i = 1, 2, ...,n.
\end{equation*}
In other words, we obtain that, for $k =1,2,..,m$,
\begin{equation}\label{eq 3g}
    \mu_i ( \{x \in E_{k,t}: |f(x) - \alpha_{k,t}| > \epsilon \} ) < \frac{\delta}{q}, \quad i = 1, 2, ...,n.
\end{equation}
where $\alpha_{k,t} \in G_0$, $ k = 1,2,...,m$ and $ t > N$. 


Since each $E_{k,t}$ is a Borel set,  it can approximated by an open set, 
i.e.,  there exists an open set $O_{k,t}^1, ... , O_{k,t}^n$ such that 
$$
\mu_1 (O_{k,t}^i\ \Delta \  E_{k,t}) < \frac{\delta}{2q}, \quad i = 1, .., n.
$$ 
Define $O_{k,t} = \bigcap_{i=1}^n O_{k,t}^i$, then, for every $i= 1,2,...n$,
one has
$$
\mu_i (O_{k,t}\  \Delta \ E_{k,t}) < \frac{\delta}{2q}.
$$
 Each open set $O_{k,t}$ is a countable union of base elements i.e. $O_{k,t}$ = $\bigcup_{i \in \mathbb{N}} B_i$, where $B_i \in \mathcal{A}$. Thus there exists a finite number, $r (k,t) \in \mathbb{N}$ such that for every $i= 1,2,...n$, 
\begin{equation}\label{eq p10}
\mu_i \Big(\big(\bigcup_{l=1}^{r(k,t)} B_l\big)\  \Delta \ 
 O_{k,t}\Big) < \frac{\delta}{2q}.
\end{equation}
Let us denote by $I_{k,t}$ the index set $I_{k,t} = \{1,2,..,r(k,t)\}$. 
Thus, \eqref{eq p10}   implies that 
$$
\mu_i \Big(\big(\bigcup_{l \in I_{k,t}} B_l\big) \ \Delta \ E_{k,t}\Big) < \frac{\delta}{q}.
$$ 
Since 
$$\Big\{x \in \Big(\bigcup_{l \in I_{k,t}} B_l\Big)\  \Delta \ E_{k,t}  : |f(x) - \alpha_{k,t}| > \epsilon \Big\} \subset \Big(\bigcup_{l \in I_{k,t}} B_l\Big) \ \ \Delta \ E_{k,t},
$$ we have
\begin{equation}\label{eq 3h}
    \mu_i \Big( \Big\{x \in \Big(\bigcup_{l \in I_{k,t}} B_l\Big)\  \Delta \ 
 E_{k,t}: |f(x) - \alpha_{k,t}| > \epsilon \Big\} \Big) < \frac{\delta}{q}, \ \ 
  i = 1, 2, ...,n.
\end{equation}
Now take $q = 2m$ where $m$ is as in the definition of $s_j$ above, 
then by $(\ref{eq 3g})$ and $(\ref{eq 3h})$, we obtain
\begin{equation}\label{eq 3i}
    \mu_i \Big( \Big\{x \Big(\in \bigcup_{l \in I_{k,t}} B_l\Big) : |f(x) - \alpha_{k,t}| > \epsilon \Big\} \Big) < \frac{\delta}{m}, \, \forall i = 1, 2, ...,n.
\end{equation}
Note that $(\ref{eq 3i})$ is true for all $m = 1,2,...,k$. Let $I_{t}= \bigcup_{k=1}^m I_{k,t}$, then 
\begin{equation}\label{eq 3i_1}
    \mu_i \Big( \Big\{x \in \Big( \bigcup_{l \in I_{k}} B_l \Big) : |f(x) - \alpha_{k,t}| > \epsilon \Big\} \Big) < \delta, \ \ \  i = 1, 2, ...,n.
\end{equation}
 Define the sequence of functions $s'_t$, for $t \in \mathbb{N}$, as follows
$$
s'_t(x)= \left\{
\begin{array}{ll}
    \alpha_{k,t}, & \mbox{$x\in B_l,\, l \in I_{k,t}$}\\
    \\
    0, & \mbox{$x\notin B_l,\, l \in I_{k,t}.$}
\end{array}
\right.
$$
Then, by $(\ref{eq 3i_1})$, we have
\begin{equation}\label{eq 3j}
    \mu_i ( \{x : |f(x) - s'_t| > \epsilon \} ) < \delta, \, \forall i = 1, 2, ...,n.
\end{equation}
Relation (\ref{eq 3j}) implies that   $s'_t \in U(f; \mu_1,...,\mu_n; \epsilon,\delta) $ for $t> N$. Therefore $\mathcal{S}(X, G_0)$ is dense in 
$\mathcal{F}(X, G)$, and $\mathcal{F}(X, G)$ is a separable space.
\medskip

To prove the second part of the theorem, we will show that $\mathcal{F}(X, G)$ is a topological group with respect to the  topology $\mathcal{T}$. We will do it for the topology $\tau_3$ (see Definition \ref{def cocycle topology}) because it is easier to work this topology.  Note the following
 facts: 

(i) $W(f; \mu_1,...,\mu_n; \epsilon) = -W(-f; \mu_1,...,\mu_n; \epsilon)$.

(ii) $W(f; \mu_1,...,\mu_n; \epsilon/2) + W(g; \mu_1,...,\mu_n; \epsilon/2) \subset W(f+g; \mu_1,...,\mu_n; \epsilon) $

Both (i) and (ii) are clear by the definition of $W(f; \mu_1,...,\mu_n; \epsilon)$ and $W(g; \mu_1,...,\mu_n; \epsilon)$. It follows from (i) 
that the map $f\mapsto -f$ is continuous and (ii) implies that the map $ (f,g) \mapsto f+g $ is also continuous.

To see that $\mathcal{F}(X, G)$ is Hausdorff in the topology $\mathcal{T}$, 
consider $f,g \in  \mathcal{F}(X, G)$ such that $f \neq g$. Then there exists 
 $x \in X$,such that $f(x) \neq g(x)$. We work with topology $\tau_1$ and put $\mu_1 = \delta_x$ (the Dirac measure at $x$). Note that,  
 for $\delta < 1$, the open set $U$ (defined below) contains $f$ but does not contain $g$:
$$
U = \{h \in \mathcal{F} : \delta_x ( \{y: |f(y) - h(y)| > \epsilon \} ) < \delta \} 
$$
For $\delta < 1$, we get 
$$
U = \{h \in \mathcal{F} : \delta_x ( \{y: |f(y) - h(y)| > \epsilon \} ) =0 \} 
$$
Therefore $x \notin \{y: |f(y) - h(y)| > \epsilon \}$ and  $g \notin U$. 
\end{proof}

\begin{proposition}\label{closed cocycles} Let $\Gamma$ be a hyperfinite countable subgroup of $Aut(X,\mathcal{B})$. The group $Z^1(\Gamma \times X,G) $ is closed in $\mathcal{F}(\Gamma \times X, G)$, and  it is a separable topological group. 
\end{proposition}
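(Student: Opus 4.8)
The plan is to dispose of the three assertions in turn, the first two by soft arguments and separability by the reduction to a single automorphism recalled in Remark~\ref{orbit eq}. For \emph{closedness}, the first step is to check that on $\mathcal F(\Gamma\times X,G)$ the topology $\mathcal T$ is finer than the topology $\tau_p$ of pointwise convergence: given points $y_1,\dots,y_m\in\Gamma\times X$ and $\epsilon>0$, the basic $\mathcal T$-neighbourhood $U(f;\delta_{y_1},\dots,\delta_{y_m};\epsilon/2,1/2)$ (with $\delta_y$ the Dirac mass at $y$) equals $\{g:|f(y_k)-g(y_k)|\le\epsilon/2 \text{ for all } k\}$, hence lies inside the basic $\tau_p$-neighbourhood $\{g:|f(y_k)-g(y_k)|<\epsilon \text{ for all } k\}$ of $f$, so every $\tau_p$-open set is $\mathcal T$-open. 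On the other hand, each instance of the cocycle identities \eqref{eq cocyc 1}, \eqref{eq cocyc 2}, say $a(\gamma_1\gamma_2,x)=a(\gamma_1,\gamma_2 x)+a(\gamma_2,x)$, equates two $\tau_p$-continuous maps $\mathcal F(\Gamma\times X,G)\to G$ (finite combinations of coordinate evaluations together with the continuous addition of $G$); since $G$ is Hausdorff each defines a $\tau_p$-closed subset, and $Z^1(\Gamma\times X,G)$ is the intersection of all of them, hence $\tau_p$-closed and a fortiori $\mathcal T$-closed.

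Because $Y=\Gamma\times X$ is a standard Borel space, Theorem~\ref{thm sep top group} applies to $\mathcal F(Y,G)=\mathcal F(\Gamma\times X,G)$ and makes it a topological group for $\mathcal T$; as $Z^1(\Gamma\times X,G)$ is already known to be a subgroup, the subspace topology makes it a topological group as well. For \emph{separability}, by Remark~\ref{orbit eq} (via Proposition~\ref{top group isomorphism}) it is enough to produce a countable $\mathcal T$-dense subset of $Z^1(\mathbb Z\times X,G)$ for one automorphism $T$. Here I would use Remark~\ref{R_a}: the assignment $\Psi\colon\mathcal F(X,G)\to Z^1(\mathbb Z\times X,G)$, $f\mapsto a_f$ given by \eqref{eq z-cocycle}, is a linear group isomorphism, so it suffices to prove that $\Psi$ is $\mathcal T$-continuous; then $\Psi$ sends the countable $\mathcal T$-dense set $\mathcal S(X,G_0)$ from the proof of Theorem~\ref{thm sep top group} onto a countable dense subset of $Z^1(\mathbb Z\times X,G)$, and we are done.

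The heart of the matter, and the step I expect to be the real obstacle, is the continuity of $\Psi$ at a point $f_0$. One fixes a basic neighbourhood $U(a_{f_0};\tilde\nu_1,\dots,\tilde\nu_n;\epsilon,\delta)$, decomposes each measure on $\mathbb Z\times X$ as $\tilde\nu_i=\sum_{j\in\mathbb Z}c_{ij}\,(\delta_j\otimes\nu_{ij})$ with $\nu_{ij}\in\mathcal M_1(X)$, and discards the indices $j$ outside a finite set $F\ni 0$ carrying all but $\delta/2$ of the mass of every $\tilde\nu_i$; put $M=\max\{1,\max_{j\in F}|j|\}$. Writing $g=f-f_0$ and using that $\Psi$ is additive and the metric on $G$ is translation invariant, $a_f(j,\cdot)-a_{f_0}(j,\cdot)=a_g(j,\cdot)$ is, for $j\in F$, a sum of at most $M$ terms $g(T^k x)$, so the set where it exceeds $\epsilon$ in norm is covered by $\bigcup_{|k|\le M}T^{-k}\{x:|f(x)-f_0(x)|>\epsilon/M\}$. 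The finitely many push-forwards $(T^k)_*\nu_{ij}$ with $|k|\le M$, $j\in F$, $1\le i\le n$, are again probability measures on $X$, so requiring $f$ to lie in the single $\mathcal F(X,G)$-neighbourhood of $f_0$ built from all of them, with parameters $\epsilon/M$ and $\delta/(2(2M+1))$, forces $\Psi(f)\in U(a_{f_0};\tilde\nu_1,\dots,\tilde\nu_n;\epsilon,\delta)$. The delicate point is exactly this bundling: a whole finite family of Birkhoff-type partial sums $a_g(j,x)=\sum_k g(T^k x)$ must be controlled at once, which pushes the threshold down to $\epsilon/M$ and introduces the iterates $(T^k)_*\nu_{ij}$, and one has to notice that these still form a finite set of probability measures; after that the estimates are routine. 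A secondary, minor point, needed only to pass from $\mathbb Z$ to a general hyperfinite $\Gamma$, is to confirm that the orbit-equivalence identification of cocycle groups in Proposition~\ref{top group isomorphism} is a homeomorphism for $\mathcal T$, not merely an algebraic isomorphism.
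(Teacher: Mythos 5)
Your proposal is correct, and in fact tighter than the paper's own argument, though it reaches the conclusion by a different organization of the same underlying idea. For closedness the paper argues sequentially: its Lemma~\ref{equality} uses Dirac measures to show that a $\mathcal T$-convergent sequence of cocycles stabilizes pointwise, and then passes the identity \eqref{eq cocyc 1} to the limit point by point. You use the same Dirac-measure observation, but phrase it as the inclusion $\tau_p\subset\mathcal T$ and then note that $Z^1$ is an intersection of equalizers of $\tau_p$-continuous maps into the Hausdorff group $G$. This buys you two things the paper's version does not deliver: genuine closedness rather than sequential closedness (the topology $\mathcal T$, indexed by all of $\mathcal M_1$, is not obviously first countable, so the paper's sequence argument proves less than is claimed), and you avoid the over-strong assertion in Lemma~\ref{equality} that $\mathcal T$-convergence forces \emph{eventual equality} $a_n(x)=a(x)$ rather than convergence $a_n(x)\to a(x)$ in $G$ (which is all the equalizer argument needs). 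For separability the routes genuinely diverge: the paper's written proof addresses only closedness and leaves separability implicit, whereas you reduce to a single automorphism via Proposition~\ref{top group isomorphism} and push the countable dense family $\mathcal S(X,G_0)$ forward under $f\mapsto a(f)$; the continuity estimate you sketch (splitting each measure on $\mathbb Z\times X$ over a finite set of integers and controlling the Birkhoff sums by the pushforwards $(T^k)_*\nu_{ij}$ at threshold $\epsilon/M$) is essentially the paper's Lemma~\ref{lem conv}, imported from Section~\ref{Sec Density}, and your computation of it is correct. You are also right to flag that Proposition~\ref{top group isomorphism} must be checked to be a homeomorphism for $\mathcal T$ and not merely an algebraic isomorphism; the paper asserts this without verification, but it follows from pushing measures forward under the orbit equivalence.
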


To prove Proposition \ref{closed cocycles}, we will show that if  
$\{a_n\} \subset Z^1(\Gamma \times X, G)$ is a sequence
of cocycles such that $a_n \rightarrow a $ in $\tau_1$, then   $a \in  
Z^1(\Gamma \times X, G) $.  For this,  we will prove the following lemma. 


\begin{lemma}\label{equality} Let  $\{a_n\}$ be a sequence of cocycles
 from  $Z^1(\Gamma \times X, G)$. Then $a_n \rightarrow a $ in the 
 topology $\tau_1$ 
 if and only if for every  $x \in X$ there exists $n(x) \in \mathbb{N}$ 
 such that  $a_n(x) = a(x)$ for all $n > n(x)$.  
\end{lemma}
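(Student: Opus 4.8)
The plan is to establish the two implications separately; the backward one is a soft continuity-of-measure argument, while the forward one is where the assumptions that $\Gamma$ is hyperfinite and that the $a_n$ are genuine cocycles (and not arbitrary elements of $\mathcal F(\Gamma\times X,G)$) have to be invoked.

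For the backward direction, suppose that for each $x$ there is $n(x)$ with $a_n(x)=a(x)$ whenever $n>n(x)$. Fix a basic $\tau_1$-neighborhood $U(a;\mu_1,\dots,\mu_m;\epsilon,\delta)$ and put $E_k=\{x:a_k(x)\neq a(x)\}$; this is Borel since $a_k$ and $a$ are Borel maps into the metrizable group $G$. Let $D_n=\bigcup_{k\ge n}E_k$. The hypothesis says exactly that each point lies in only finitely many $E_k$, i.e. $\limsup_k E_k=\emptyset$, so $D_n$ decreases to $\emptyset$; since each $\mu_i$ is a probability measure, $\mu_i(D_n)\to 0$, and for large $n$ we get $\mu_i(D_n)<\delta$ for all $i$. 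Because $\{x:|a_n(x)-a(x)|>\epsilon\}\subseteq E_n\subseteq D_n$, it follows that $a_n\in U(a;\mu_1,\dots,\mu_m;\epsilon,\delta)$ for all large $n$, so $a_n\to a$ in $\tau_1$. This half uses neither the cocycle identity nor hyperfiniteness.

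For the forward direction, assume $a_n\to a$ in $\tau_1$. First I would test the convergence against Dirac measures: fixing a point $x$, for any $\epsilon>0$ and any $0<\delta<1$ the neighborhood $U(a;\delta_x;\epsilon,\delta)$ contains $a_n$ for all large $n$, and $\delta_x(\{y:|a_n(y)-a(y)|>\epsilon\})<\delta<1$ forces this set to omit $x$, hence $|a_n(x)-a(x)|\le\epsilon$; letting $\epsilon$ run through a null sequence gives $a_n(x)\to a(x)$ in $G$ for every $x$. The decisive step is then to upgrade this pointwise convergence to eventual pointwise equality. Here one must use the cocycle structure over the hyperfinite group: via Remark~\ref{orbit eq} reduce to the case $\Gamma=\{T^n:n\in\mathbb Z\}$, identify each $a_n$ with its generating function on $X$, and exploit the rigidity of $Z^1$ along $T$-orbits to conclude that the pointwise limits are attained for all sufficiently large $n$. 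I expect this upgrade to be the main obstacle: for a generic sequence of Borel functions, $\tau_1$-convergence only yields pointwise convergence, not eventual equality, so the argument genuinely has to use that $\{a_n\}$ lies in $Z^1(\Gamma\times X,G)$ with $\Gamma$ hyperfinite. Granting Lemma~\ref{equality}, Proposition~\ref{closed cocycles} then follows at once: given $\gamma_1,\gamma_2\in\Gamma$ and $x\in X$, choose $n$ so large that $a_n$ and $a$ agree at the finitely many points entering the cocycle identity, and carry the identity over from $a_n$ to $a$.
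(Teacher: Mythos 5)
Your backward implication is correct and is essentially the paper's own argument: you pass to the decreasing tail sets $D_n=\bigcup_{k\ge n}E_k$ with empty intersection, while the paper works with the complementary increasing sets $X_n=\{x: a_m(x)=a(x)\ \forall m\ge n\}$ exhausting $X$; both reduce to continuity of each $\mu_i$ along a monotone sequence of Borel sets. You are also right that this half uses neither the cocycle identity nor hyperfiniteness.

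The forward implication is where your proposal stops, and the step you flag as ``the main obstacle'' is not merely hard --- it cannot be carried out, because the implication as stated is false for non-discrete $G$. Testing against Dirac measures gives, for each fixed $\epsilon$, an index $n(x,\epsilon)$ with $|a_n(x)-a(x)|\le\epsilon$ for $n>n(x,\epsilon)$; since this index depends on $\epsilon$, one only obtains $a_n(x)\to a(x)$ in $G$, exactly as you observe. But the hoped-for rigidity of $Z^1$ is not there to exploit: by Remark \ref{R_a}, after reducing to a $\mathbb Z$-action \emph{every} Borel function $f:X\to G$ is the generating function of a cocycle, so the class of cocycles, viewed as functions on $X$, is all of $\mathcal F(X,G)$. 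Concretely, for $G=\mathbb R$ take $f_n\equiv 1/n$, generating the cocycles $a_n(j,x)=j/n$. Then $a_n\to 0$ in $\tau_1$, since the exceptional sets $\{x:|f_n(x)|>\epsilon\}$ are empty for $n>1/\epsilon$, yet $a_n(1,x)=1/n\neq 0$ for every $n$ and every $x$. The paper's own proof commits precisely the slip you were trying to avoid: from ``$x\notin\{y:|a_n(y)-a(y)|>\epsilon\}$ for all $n>n(x)$'' it concludes ``$a_n(x)=a(x)$'', suppressing the dependence of $n(x)$ on $\epsilon$. The salvageable statement is that $\tau_1$-convergence is equivalent to pointwise convergence $a_n(x)\to a(x)$ for all $x$ (your $D_n$ argument, applied to the sets $\{|a_k-a|>\epsilon\}$, gives the converse direction as well), and this weaker version already suffices for Proposition \ref{closed cocycles}, since the cocycle identity involves finitely many points and is preserved under limits by continuity of addition in $G$.
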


\begin{proof}
 As mentioned in  Remark $\ref{R_a}$ the group $\Gamma$ is 
orbit equivalent to a group generated  by a single automorphism $\{T^n : 
 n \in \mathbb{Z}\}$. It gives us the possibility to represent cocycles 
$a_n$ as functions on $X$ with values in the group $G$. 

Assume now that $a_n \rightarrow a $ in $\tau_1$.  Then, for every 
$\epsilon,\delta > 0$ there exists $n(x) \in \mathbb{N}$ such that 
$a_n \in U(a; \mu_1,...,\mu_p; \epsilon,\delta)$ for $n > n(x)$ (here  
$\mu_1, ..,\mu_p \in \mathcal{M}_1(X)$ as usual). Fix $x \in X$ and take 
$\mu_1 = \delta_x$ (the Dirac measure at $x$). Thus we have 
$\delta_x(\{y: |a_n (y)- a (y)|> \epsilon\}) < \delta$. For  $\delta < 1$ we
get $\delta_x(\{y: |a_n (y)- a (y)|> \epsilon\}) =0$. Hence $x \notin \{y: |
a_n (y)- a (y)|> \epsilon\}$ for all $n > n(x)$. We conclude that
 $a_n(x) = a(x)$. 

Conversely, suppose that, for every $x \in X$, there exists $n(x) \in 
\mathbb{N}$ such that  $a_n(x) = a(x)$ for all $n > n(x)$.  Define 
$X_n = \{x \in X : a_m(x) = a(x), \forall m \geq n \}$, $ n \in \mathbb N$. 
Note that $X_n \subset X_{n+1}$, and $\bigcup_{n=1}^{\infty} X_n = X$. 
For every $\mu \in \mathcal{M}_1(X)$, we see that $\mu(X_n) \rightarrow 
1$ as $n \rightarrow \infty$. Take a neighborhood $U(a; \mu_1,...,
\mu_p; \epsilon,\delta)$ and find  $n_0 \in \mathbb{N}$ such that 
$\mu_i (X_n) > 1- \delta$ for  $n > n_0$, $i=1,2,...,p$. Note that, for all 
$n \in \mathbb{N}$, 
$$
\{x \in X: |a_n(x)-a(x)| > \epsilon\} \subset X \setminus X_n. 
$$
 Thus $\mu_i(\{x \in X: |a_n(x)-a(x)| > \epsilon\}) < \mu_i(X 
\setminus X_n) < \delta$. Hence, for $n > n_0$, 
we deduce that $\mu_i(\{x \in X: |a_n(x)-a(x)| > \epsilon\}) < \delta$ 
as needed.  
\end{proof}
\medskip


\noindent\textit{Proof of Proposition 3.4}. We switch back to considering $a_n$ and $a$ as functions from $\Gamma \times X$ to $G$. Since $a_n \in  Z^1(\Gamma \times X,G) $, $  a_n(\gamma_1 \gamma_2,x) = a_n (\gamma_1, \gamma_2 x) + a_n (\gamma_2, x)$,  $ \forall \gamma_1, \gamma_2 \in \Gamma$. 

For a fixed $x \in X$, let $n_0 = max \{n(x), n(\gamma_2 x)\}$, then for $n > n_0$, we have 
$$
a_n(\gamma_1 \gamma_2,x) = a(\gamma_1 \gamma_2,x),
$$
$$
a_n(\gamma_1, \gamma_2 x) = a(\gamma_1, \gamma_2 x),
$$
$$
a_n (\gamma_2, x) = a(\gamma_2, x).
$$

\noindent Hence $a(\gamma_1 \gamma_2,x) = a(\gamma_1, \gamma_2 x) + a(\gamma_2, x)$,  $ \forall \gamma_1, \gamma_2 \in \Gamma$. Since we can do this for every $x \in X$, $a \in Z^1(\Gamma \times X,G) $. 
\hfill{$\square$}


\begin{proposition}\label{top group isomorphism} Let $\Gamma_i \in  
Aut(X_i,\mathcal{B}_i)$, $i = 1,2, $ be two orbit equivalent countable Borel
 automorphism groups. Then there exists a topological group isomorphism 
 $ \widetilde{\varphi} : Z^1(\Gamma_1 \times X_1,A) \rightarrow$ 
 $Z^1(\Gamma_2 \times X_2,A)$ which carries coboundaries to 
 coboundaries. 

\end{proposition}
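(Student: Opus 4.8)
\emph{Proof proposal.}
The plan is to transport cocycles along the orbit equivalence. Let $\varphi:(X_1,\B_1)\to(X_2,\B_2)$ be a Borel isomorphism realizing the orbit equivalence, so $\varphi\Gamma_1 x=\Gamma_2\varphi x$ for all $x\in X_1$, equivalently $\varphi[\Gamma_1]\varphi^{-1}=[\Gamma_2]$; as elsewhere in this section we may assume $\Gamma_1,\Gamma_2$ act freely. Conceptually, by Remark \ref{R_1} a cocycle of a freely acting group is an orbital cocycle of its orbit equivalence relation, and $\varphi$ induces a Borel isomorphism $E_{X_1}(\Gamma_1)\to E_{X_2}(\Gamma_2)$, $(x,y)\mapsto(\varphi x,\varphi y)$; composing these identifications yields $\widetilde\varphi$. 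I would implement this concretely through a single Borel isomorphism $\psi:\Gamma_2\times X_2\to\Gamma_1\times X_1$ and set $\widetilde\varphi(a)=a\circ\psi$.

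\emph{Construction of $\psi$.} Given $(\gamma_2,x')$, put $y=\varphi^{-1}x'$; since $\varphi^{-1}\gamma_2\varphi\in[\Gamma_1]$ and $\Gamma_1$ acts freely, there is a unique $\gamma_1=\gamma_1(\gamma_2,x')\in\Gamma_1$ with $\gamma_1 y=\varphi^{-1}(\gamma_2 x')$; set $\psi(\gamma_2,x')=(\gamma_1(\gamma_2,x'),y)$. For fixed $\gamma_2$ and each $\gamma\in\Gamma_1$ the set $\{x'\in X_2:\gamma(\varphi^{-1}x')=\varphi^{-1}(\gamma_2 x')\}$ is Borel, and these sets partition $X_2$ (they cover because $\varphi$ is an orbit equivalence and are disjoint by freeness); as $\Gamma_2$ is countable, $\psi$ is Borel. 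Running the same construction with the orbit equivalence $\varphi^{-1}$ gives a Borel map $\psi':\Gamma_1\times X_1\to\Gamma_2\times X_2$, and a short check using freeness shows $\psi'\circ\psi=\mathrm{id}$ and $\psi\circ\psi'=\mathrm{id}$, so $\psi$ is a Borel isomorphism with $\psi^{-1}=\psi'$.

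\emph{$\widetilde\varphi$ is a group isomorphism preserving coboundaries.} For $a\in Z^1(\Gamma_1\times X_1,G)$ one checks the cocycle identity for $a\circ\psi$: writing $\psi(\gamma_2',x')=(\eta',y)$, $\psi(\gamma_2,\gamma_2'x')=(\eta'',\eta' y)$, $\psi(\gamma_2\gamma_2',x')=(\eta,y)$, freeness forces $\eta=\eta''\eta'$, whence $a(\eta,y)=a(\eta'',\eta' y)+a(\eta',y)$ is exactly the cocycle identity for $a\circ\psi$; the normalization is immediate. Thus $\widetilde\varphi$ maps into $Z^1(\Gamma_2\times X_2,G)$, is clearly additive, and $a\mapsto a\circ\psi^{-1}$ (built from $\varphi^{-1}$) is a two-sided inverse, so $\widetilde\varphi$ is a group isomorphism. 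If $a(\gamma,x)=c(\gamma x)-c(x)$ with $c:X_1\to G$ Borel, then for $\psi(\gamma_2,x')=(\gamma_1,y)$ with $\gamma_1 y=\varphi^{-1}(\gamma_2 x')$ we get $\widetilde\varphi(a)(\gamma_2,x')=c(\gamma_1 y)-c(y)=(c\circ\varphi^{-1})(\gamma_2 x')-(c\circ\varphi^{-1})(x')$, a coboundary; applying this also to $\widetilde\varphi^{-1}$ shows $\widetilde\varphi$ carries $B^1(\Gamma_1\times X_1,G)$ onto $B^1(\Gamma_2\times X_2,G)$.

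\emph{$\widetilde\varphi$ is a homeomorphism for $\mathcal T$.} For $\nu\in\mathcal M_1(\Gamma_2\times X_2)$ let $\psi_*\nu\in\mathcal M_1(\Gamma_1\times X_1)$ be the pushforward; then $\psi_*$ is a bijection of $\mathcal M_1(\Gamma_2\times X_2)$ onto $\mathcal M_1(\Gamma_1\times X_1)$. Since $\{z:|\widetilde\varphi(a)(z)-\widetilde\varphi(a')(z)|>\epsilon\}=\psi^{-1}\{w:|a(w)-a'(w)|>\epsilon\}$, we have $\nu(\{z:|\widetilde\varphi(a)(z)-\widetilde\varphi(a')(z)|>\epsilon\})=(\psi_*\nu)(\{w:|a(w)-a'(w)|>\epsilon\})$, so $\widetilde\varphi$ maps the basic $\tau_1$-neighborhood $U(a;\psi_*\nu_1,\dots,\psi_*\nu_p;\epsilon,\delta)$ bijectively onto $U(\widetilde\varphi(a);\nu_1,\dots,\nu_p;\epsilon,\delta)$; surjectivity of $\psi_*$ then gives that $\widetilde\varphi$ is both continuous and open. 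The only genuinely delicate step is the construction of $\psi$ — checking it is well defined, Borel, and bijective, which is where freeness and the relation $\varphi[\Gamma_1]\varphi^{-1}=[\Gamma_2]$ enter; the rest is bookkeeping, the one place to be careful being the direction of the pushforward of measures.
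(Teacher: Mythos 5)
Your construction is exactly the paper's: the authors define $\widetilde\varphi(a)(\gamma_2,x_2)=a(\varphi^{-1}\gamma_2\varphi,\varphi^{-1}x_2)$, interpreting $\varphi^{-1}\gamma_2\varphi\in[\Gamma_1]$ pointwise via the unique group element supplied by freeness, which is precisely your map $\psi$, and they verify coboundary preservation by the same substitution $c\mapsto c\circ\varphi^{-1}$. Your write-up is correct and in fact more complete, since you also check Borelness of $\psi$, the cocycle identity, and the homeomorphism property via pushforward of measures, whereas the paper dismisses the (topological) isomorphism claim with ``by definition.''
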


\noindent\textit{Proof}. Since $\Gamma_1$ and $\Gamma_2$ are orbit equivalent, there exists a Borel map $\varphi : X_1 \rightarrow X_2$, such
that $\varphi [\Gamma_1] = [\Gamma_2] \varphi$. Define  
$\widetilde{\varphi} : Z^1(\Gamma_1 \times X_1,A) \rightarrow$ $Z^1(\Gamma_2 \times X_2,A)$ as 
\begin{equation*}
    \widetilde{\varphi} \circ a_1 (\gamma_2,x_2) = a_1 (\varphi^{-1} \gamma_2 \varphi, \varphi^{-1} x_2 )
\end{equation*}
for $a_1 \in Z^1(\Gamma_1 \times X_1, G)$ and $(\gamma_2,x_2) \in \Gamma_2 \times X_2$. Then, $\widetilde{\varphi} $ is an isomorphism by definition. If $a_1$ is a coboundary, $a_1(\gamma_1, x_1) = c(\gamma_1 x_1)- c(x_1)$, where $c : X \rightarrow G$ is a Borel map.
\begin{equation*}
\widetilde{\varphi} \circ a_1 (\gamma_2,x_2) = a_1 (\varphi^{-1} \gamma_2 \varphi, \varphi^{-1} x_2 ) = c(\varphi^{-1} \gamma_2 \varphi (\varphi^{-1} x_2)) - c(\varphi^{-1} x_2)
\end{equation*}
is also a coboundary. \hfill{$\square$}

\vspace{3mm} 

\begin{corollary}\label{invariance}  For a Borel automorphism group $\Gamma$ of $(X,\mathcal{B})$ the first cohomology group $H^1(\Gamma \times X, G) = Z^1(\Gamma \times X,G) /
 B^1(\Gamma \times X, G)$ is an invariant of orbit equivalence.

\end{corollary}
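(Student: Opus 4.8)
The plan is to deduce this immediately from Proposition~\ref{top group isomorphism}. Suppose $\Gamma_1$ and $\Gamma_2$ are orbit equivalent via a Borel isomorphism $\varphi$ as in that proposition, and let $\widetilde\varphi : Z^1(\Gamma_1\times X_1, G)\to Z^1(\Gamma_2\times X_2, G)$ be the induced topological group isomorphism. First I would observe that $\widetilde\varphi$ maps $B^1(\Gamma_1\times X_1, G)$ \emph{onto} $B^1(\Gamma_2\times X_2, G)$, not merely into it. Indeed, Proposition~\ref{top group isomorphism} already yields $\widetilde\varphi\bigl(B^1(\Gamma_1\times X_1, G)\bigr)\subseteq B^1(\Gamma_2\times X_2, G)$, and applying the same proposition to the orbit equivalence given by $\varphi^{-1}$ gives the reverse inclusion $\widetilde\varphi^{-1}\bigl(B^1(\Gamma_2\times X_2, G)\bigr)\subseteq B^1(\Gamma_1\times X_1, G)$, since the map associated to $\varphi^{-1}$ by the formula in that proof is precisely $\widetilde\varphi^{-1}$. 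Hence $\widetilde\varphi$ restricts to an isomorphism between the subgroups of coboundaries.

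Next I would invoke the standard fact that a group isomorphism carrying one distinguished subgroup isomorphically onto another descends to an isomorphism of the quotient groups; in our abelian setting normality of the subgroups is automatic. Concretely, the composition
$$
Z^1(\Gamma_1\times X_1,G)\xrightarrow{\ \widetilde\varphi\ } Z^1(\Gamma_2\times X_2,G)\longrightarrow H^1(\Gamma_2\times X_2,G)
$$
is a surjective homomorphism whose kernel is exactly $\widetilde\varphi^{-1}\bigl(B^1(\Gamma_2\times X_2,G)\bigr) = B^1(\Gamma_1\times X_1,G)$ by the previous paragraph. By the first isomorphism theorem it therefore induces an isomorphism $H^1(\Gamma_1\times X_1,G)\cong H^1(\Gamma_2\times X_2,G)$.

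Finally I would remark that since $\widetilde\varphi$ depends only on a Borel map $\varphi$ conjugating the full groups (equivalently, on the isomorphism of the associated orbit equivalence relations), the isomorphism class of $H^1(\Gamma\times X,G)$ depends only on the orbit equivalence class of $\Gamma$, which is the precise meaning of ``invariant of orbit equivalence.'' There is essentially no obstacle here: the only point needing a line of care is the surjectivity of $\widetilde\varphi$ on coboundaries, which the symmetry argument above settles, and everything else is a direct application of the first isomorphism theorem.
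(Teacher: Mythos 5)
Your proposal is correct and follows exactly the route the paper intends: the corollary is stated as an immediate consequence of Proposition~\ref{top group isomorphism}, and your argument simply makes explicit the two routine points the paper leaves implicit (surjectivity of $\widetilde\varphi$ on coboundaries via the symmetric map induced by $\varphi^{-1}$, and passage to quotients by the first isomorphism theorem). No gaps.
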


\begin{remark}\label{coboundary structure}   In general, 
$B^1(\Gamma \times X, G)$ is not closed in the topology described above. Hence 
$H^1(\Gamma \times X, G)$ should be considered as  an abstract
 group that does not inherit the topological or Borel structure.
\end{remark}

\begin{remark}\label{rem ctbl}
Let $Ctbl(X)$ be defined as the subset of $Aut(X, \B)$ consisting of all
automorphisms with countable support, that is 
$$T\in Ctbl(X) \ 
\Longleftrightarrow  \ E(S, \mathbbm I)\ 
{\rm is \ at\ most\ countable}.$$ 
One can show that $Ctbl(X)$ is a normal subgroup which is 
closed with respect to the  uniform topology, see \eqref{eq top t} in 
Definition \ref{def topology}.
 Therefore
$\widehat{Aut}(X,{\B})= Aut(X, \B)/Ctbl(X)$ is a simple Hausdorff 
topological group
with respect to the quotient topology 
\cite{BezuglyiDooleyKwiatkowski_2006}. Considering elements from
$\widehat{Aut}(X,{\B})$, we identify Borel automorphisms which differ on at most a countable set.  Topological properties of the
group $\widehat{Aut}(X,{\B})$ are studied in 
\cite{BezuglyiMedynets_2004}. It was shown that the quotient
topology on $\widehat{Aut}(X,{\B})$ is in fact generated by 
neighborhoods $V(T; \mu_1, ... ,\mu_n; \varepsilon)$ where the 
measures $\mu_1, ... ,\mu_n$ are taken from 
$M^c_1(X)$, the set of all non-atomic Borel probability measures
on a standard Borel space $(X, \B)$. 

Using a similar approach, we identify two functions $f$ and $g$ if
they differ on at most a countable set. In other words, we define the
quotient set $\widehat{\mc F}$ with elements $\widehat g = 
\{g\circ T : T\in Ctbl(X)\}$ where $g \in \mc F(X, \B)$. Then one can 
show that the quotient topology $\widehat {\tau}$ on $\widehat
{\mc F}$ is
 defined by neighborhoods $V(f; \mu_1, ... ,\mu_k; \epsilon)$ where
 the measures $\mu_1, ... ,\mu_k \in M^c_1(X)$. 
\end{remark}

Based on Remark \ref{rem ctbl}, we can obtain the following result. 
The proof is left for the reader because we do not use this result in the
paper.

\begin{proposition}
Let $\widehat \tau$ be the topology on $\widehat{\mc F}
(X, G)$ defined as in 
Remark \ref{rem ctbl} by atomless  measures  from 
$M_1^c(X)$. Then, for $\widehat f_n $ and $\widehat f$ from 
$\widehat {\mc F}$,  
$$
\widehat f_n \ \stackrel{\widehat{\tau}}\longrightarrow \ \widehat f
$$
if and only  $(\widehat f_n)$ converges to $\widehat f$ uniformly.

\end{proposition}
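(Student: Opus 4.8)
The plan is to translate both sides of the asserted equivalence into concrete statements about the Borel functions $f_n, f$ and then move between them using a single fact — an atomless Borel probability measure vanishes on every countable set — together with the perfect set property of Borel sets. First I would record what the two sides mean. Since a countable set is $\mu$-null for every $\mu\in M^c_1(X)$, quantities such as $\mu_i(\{x:|f_n(x)-f(x)|>\epsilon\})$ are independent of the chosen representatives of $\widehat f_n$ and $\widehat f$, so both sides are well defined on $\widehat{\mc F}(X,G)$. Unwinding the neighbourhoods $V(\widehat f;\mu_1,\dots,\mu_k;\epsilon)$ from Remark~\ref{rem ctbl} (and using that the neighbourhoods involving a single measure generate the same filter, so convergence may be tested one measure at a time), convergence $\widehat f_n\to\widehat f$ in $\widehat\tau$ amounts to: for every $\mu\in M^c_1(X)$ and every $\epsilon>0$ there is $N$ with $\mu(\{x:|f_n(x)-f(x)|>\epsilon\})=0$ for all $n\ge N$. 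On the other side, ``$(\widehat f_n)$ converges to $\widehat f$ uniformly'' means that there exist representatives $g_n\in\widehat f_n$, $g\in\widehat f$ with $g_n\to g$ uniformly on $X$; equivalently (modify $g_n$ on a countable set to coincide with $g$ there) there is a single countable set $C\subseteq X$ such that $f_n\to f$ uniformly on $X\setminus C$.

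The easy implication is ``uniform $\Rightarrow$ $\widehat\tau$''. If $f_n\to f$ uniformly on $X\setminus C$ with $C$ countable, then for a fixed $\epsilon>0$ there is $N$ with $\sup_{x\notin C}|f_n(x)-f(x)|\le\epsilon$ for $n\ge N$, so $\{x:|f_n(x)-f(x)|>\epsilon\}\subseteq C$, and this set is $\mu$-null for every atomless $\mu$. That is exactly the characterisation of $\widehat\tau$-convergence obtained above.

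The substantial part is ``$\widehat\tau$ $\Rightarrow$ uniform''. Assume $\widehat f_n\to\widehat f$ in $\widehat\tau$ and, for $k\in\N$, put $A_n^{(k)}=\{x:|f_n(x)-f(x)|>1/k\}$, which is Borel since $f_n,f$ are Borel and the metric on $G$ is continuous. The key claim is that for each $k$ there is $N_k$ with $A_n^{(k)}$ countable for all $n\ge N_k$. Suppose this fails for some $k$: then $A_{n_j}^{(k)}$ is uncountable along a strictly increasing sequence $(n_j)$. By the perfect set property of Borel sets (see \cite{Kechris_1995}), each $A_{n_j}^{(k)}$ contains a nonempty perfect set, which carries an atomless Borel probability measure $\mu_j$ with $\mu_j(A_{n_j}^{(k)})=1$. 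Then $\mu:=\sum_{j}2^{-j}\mu_j$ is again an atomless Borel probability measure (a countable convex combination of atomless measures is atomless), and $\mu(A_{n_j}^{(k)})\ge 2^{-j}>0$ for every $j$; hence $\mu(\{x:|f_n(x)-f(x)|>1/k\})\ne 0$ for infinitely many $n$, contradicting the characterisation of $\widehat\tau$-convergence. This proves the claim. Finally set $C=\bigcup_{k}\bigcup_{n\ge N_k}A_n^{(k)}$, a countable union of countable sets, hence countable; for $x\notin C$ and $n\ge N_k$ one has $x\notin A_n^{(k)}$, i.e.\ $|f_n(x)-f(x)|\le 1/k$, so $\sup_{x\notin C}|f_n(x)-f(x)|\le 1/k$ for $n\ge N_k$. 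Thus $f_n\to f$ uniformly on $X\setminus C$, i.e.\ $(\widehat f_n)$ converges to $\widehat f$ uniformly.

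The only genuinely delicate step is the one inside the hard direction: building a \emph{single} ``bad'' atomless measure $\mu\in M^c_1(X)$ that detects the failure of convergence, thereby converting the hypothesis ``$A_n^{(k)}$ is uncountable infinitely often'' into a concrete violation of convergence against one measure. Everything else is routine: the bookkeeping that makes the exceptional countable set $C$ independent of $\epsilon$ is handled by the $1/k$-exhaustion and a countable union, and the measurability points are immediate from the Borel regularity of $f_n$ and $f$.
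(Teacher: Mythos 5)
First, there is nothing in the paper to compare against: the authors explicitly leave this proposition unproved (``the proof is left for the reader because we do not use this result in the paper''), so your argument stands or falls on its own. Your easy direction (uniform convergence of the classes implies $\widehat\tau$-convergence) is correct, since the countable exceptional set is null for every atomless measure. The hard direction, however, breaks exactly at the step you yourself flag as delicate. You characterize $\widehat\tau$-convergence as: for every $\mu\in M^c_1(X)$ and every $\epsilon>0$, \emph{eventually} $\mu(\{x:|f_n(x)-f(x)|>\epsilon\})=0$. The neighborhoods $V(f;\mu_1,\dots,\mu_k;\epsilon)$ do not give this; they only force $\mu(\{|f_n-f|>\epsilon\})<\delta$ for each prescribed $\delta>0$, i.e.\ $\mu(\{|f_n-f|>\epsilon\})\to 0$, and nothing upgrades ``tends to $0$'' to ``is eventually $0$''. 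Consequently the contradiction you derive is not one: your measure $\mu=\sum_j 2^{-j}\mu_j$ satisfies $\mu(A^{(k)}_{n_j})\ge 2^{-j}$, and $2^{-j}\to 0$ is perfectly compatible with $\mu(A^{(k)}_n)\to 0$. To run your argument you would need a single atomless measure giving the sets $A^{(k)}_{n_j}$ mass bounded away from $0$, and such a measure need not exist.

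The gap is not repairable, because the implication ``$\widehat\tau$-convergence $\Rightarrow$ uniform convergence'' is false under the (natural) reading of uniform convergence you adopt. Take $X=[0,1]$, $G=\R$, $f=0$, and $f_n=\chi_{B_n}$ with $B_n=(\tfrac{1}{n+1},\tfrac1n)$ pairwise disjoint uncountable Borel sets. For \emph{every} Borel probability measure $\mu$ one has $\sum_n\mu(B_n)\le 1$, hence $\mu(B_n)\to 0$; since $\{x:|f_n(x)|>\epsilon\}$ equals $B_n$ for $\epsilon<1$ and is empty for $\epsilon\ge 1$, the sequence $f_n$ converges to $0$ even in $\mc T$, and a fortiori $\widehat f_n\to\widehat 0$ in $\widehat\tau$. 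Yet $B_n$ minus any countable set is nonempty, so $\sup|g_n-g|\ge 1$ for all $n$ and all representatives $g_n\in\widehat f_n$, $g\in\widehat 0$: there is no uniform convergence. What $\widehat\tau$-convergence actually encodes is ``$\mu(\{|f_n-f|>\epsilon\})\to 0$ for every atomless $\mu$ and every $\epsilon>0$'', which is strictly weaker than uniform convergence off a countable set. Your perfect-set construction is the right tool for the correct statement one can extract here --- namely that a \emph{single} class $\widehat g$ satisfies $|g-f|\le\epsilon$ off a $\mu$-null set for every atomless $\mu$ if and only if $\{|g-f|>\epsilon\}$ is countable --- but it does not transfer to sequences, and you should either prove only the ``if'' direction or record explicitly that the ``only if'' direction requires a different (stronger) notion of convergence in $\widehat\tau$ than the one the neighborhood base provides.
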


\section{Density of coboundries for hyperfinite Borel actions} \label{Sec Density}
In this section we prove following result.

\begin{theorem}\label{thm dense} Let $\Gamma
 \subset Aut(X,\mathcal{B})$ be a hyperfinite Borel automorphism group. 
 Then $B^1(\Gamma \times X, G)$ is dense in $Z^1(\Gamma \times X, G)$
 with respect to the topology $\mathcal T$ where $G$ is a l.c.s.c. group.

\end{theorem}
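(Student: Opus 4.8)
The plan is to reduce to the $\mathbb{Z}$-action case and then exploit the Egoroff-style approximation already developed in the proof of Theorem~\ref{thm sep top group}. By Remark~\ref{orbit eq} (and Proposition~\ref{top group isomorphism}, which shows the cohomology is an orbit-equivalence invariant), we may assume without loss of generality that $\Gamma$ is generated by a single automorphism $T$, so that a cocycle $a\in Z^1(\Gamma\times X,G)$ is completely determined by the Borel function $f(x)=a(1,x)$ via formula~\eqref{eq z-cocycle}, and $a$ is a coboundary precisely when $f(x)=c(Tx)-c(x)$ for some Borel $c$. Thus it suffices to show: given $f\in\mathcal F(X,G)$, finitely many measures $\mu_1,\dots,\mu_n\in\mathcal M_1(X)$, and $\epsilon,\delta>0$, there is a Borel $c:X\to G$ such that, writing $g(x)=c(Tx)-c(x)$, we have $\mu_i(\{x:|f(x)-g(x)|>\epsilon\})<\delta$ for all $i$.

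The key step is to use hyperfiniteness of $T$ in the form of a Rokhlin-type tower (or, equivalently, the fact that $E_X(\{T^n\})=\bigcup_k E_k$ with each $E_k$ a finite subequivalence relation): for each $k$, partition (most of) $X$ into finite $T$-towers (orbit segments under the finite relation $E_k$) whose levels are Borel sets. On each such finite tower $\{B, TB, \dots, T^{m-1}B\}$, one may \emph{prescribe} the coboundary freely: define $c$ on the base level $B$ arbitrarily, say $c\equiv 0$, and then set $c(T^{j}x)=f(x)+f(Tx)+\cdots+f(T^{j-1}x)$ for $x\in B$, $1\le j\le m-1$. With this choice, $c(Tx)-c(x)=f(x)$ at every point of the tower \emph{except} at the top level $T^{m-1}B$, where the telescoping fails. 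Hence the set where $g\neq f$ is contained in the union of the top levels of all towers together with the (small) remainder set not covered by towers. So the proof reduces to making the measure of this "error set" small simultaneously for $\mu_1,\dots,\mu_n$.

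To control the error set, I would invoke a Borel version of the Rokhlin lemma / the structure of hyperfinite relations to obtain, for each $N\in\mathbb{N}$, a Borel partition of a co-small subset of $X$ into finite $T$-towers each of height $\ge N$; the top levels then form a set of small $\mu_i$-measure. More carefully: since $E_X(\{T^n\})$ is aperiodic and hyperfinite, one can find Borel towers with arbitrarily long columns covering all but a set of arbitrarily small measure for any \emph{fixed} finite family of measures — this is the standard consequence of the marker lemma / Rokhlin lemma for Borel $\mathbb{Z}$-actions (see Weiss, or the Borel Rokhlin lemma). Choosing $N$ large so that the union of top levels plus the uncovered remainder has $\mu_i$-measure $<\delta$ for each $i$, and defining $c$ as above on each tower (and $c\equiv 0$ on the remainder, so that $g$ may disagree with $f$ there too), we get $\{x:|f(x)-g(x)|>\epsilon\}\subseteq\{x:g(x)\neq f(x)\}\subseteq(\text{top levels})\cup(\text{remainder})$, which has $\mu_i$-measure $<\delta$. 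This places the coboundary $g$ in the prescribed $\tau_1$-neighborhood of $f$.

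The main obstacle is the \emph{Borel} Rokhlin lemma with uniform measure control: unlike the measured setting, there is no ambient measure, so one needs the towers to be genuinely Borel and to simultaneously cover all but a $\mu_i$-small set for each $i=1,\dots,n$. This follows from the hyperfinite structure $E=\bigcup_k E_k$ by passing to $k$ large enough that the finite classes of $E_k$ have diameter $\ge N$ along the $T$-orbit on a set of $\mu_i$-measure $>1-\delta$ for all $i$ (using that $\mu_i(\{x:[x]_{E_k}\supseteq\{x,Tx,\dots,T^{N-1}x\}\})\to 1$ as $k\to\infty$, by dominated convergence, since the ascending union of the $E_k$ recovers the full $T$-orbit). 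Once the towers are in hand, the definition of $c$ and the telescoping estimate are routine.
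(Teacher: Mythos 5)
Your proposal is correct and is essentially the paper's own argument: reduce to a single automorphism $T$, decompose $X$ into Kakutani--Rokhlin towers arising from a vanishing sequence of markers, define the transfer function by telescoping $f$ up each tower (the paper packages this as approximating $T$ by periodic automorphisms $P_n\in[T]$ and invoking the fact that every cocycle of a periodic automorphism is a coboundary), and observe that the resulting coboundary agrees with $f$ off the union of the tower tops. One caution: for non-invariant measures $\mu_i$, making the towers tall does \emph{not} by itself force the union of top levels to have small $\mu_i$-measure; the correct mechanism --- which you also supply in your final paragraph --- is that the sets of tops decrease to $\emptyset$ along the increasing exhaustion $E=\bigcup_k E_k$ (equivalently, along the vanishing markers), so their $\mu_i$-measures tend to $0$ for each fixed $i$ by monotone convergence.
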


Since $\Gamma$ is hyperfinite, it is orbit equivalent to a Borel 
$\mathbb{Z}$-action. By Corollary $\ref{invariance}$, the first cohomology 
group is an invariant of orbit equivalence. Hence, without loss of generality, 
it suffices to prove the statement for a single Borel automorphism $T \in
 Aut(X,\mathcal{B})$. 
To prove the theorem, we will use the Kakutani tower construction for an
 aperiodic Borel automorphism which gives the possibility to use periodic
automorphisms to approximate $T$. This construction is described in 
 \cite[Chapter 7]{Nadkarni_2013} and \cite{BezuglyiDooleyKwiatkowski_2006}. We  include it here for convenience of the reader.

Recall that  a Borel set $A\subset X$ is called a {\it complete section} (or 
simply a {\it $T$-section}) for an automorphism $T \in Aut(X,\mathcal{B})$ 
if every $T$-orbit meets $A$ at least once. If there exists a complete Borel 
section $A$ such that $A$ meets every $T$-orbit exactly once, then $T$ is 
called {\it smooth}. In this case, $X = \bigcup_{i\in \Z} T^iA$ and all the sets 
$T^iA$ are disjoint.
A measurable set $W$ is said to be \textit{wandering} with respect to 
$T \in Aut(X, \mathcal{B})$ if the sets $T^nW,\ n\in \mathbb{Z}$, are 
pairwise disjoint. 
The $\sigma$-ideal generated by all $T$-wandering sets in $\B$ is denoted 
by $\mathcal{W}(T)$. By the Poincar\'{e} recurrence lemma, one can state that 
given $T \in Aut(X,\mathcal{B})$ and $A\in \mathcal{B}$ there exists $N\in 
\mathcal{W}(T)$ such that for each $x\in A\setminus N$ the points $T^nx$ 
return to $A$ for infinitely many positive $n$ and also for infinitely many 
negative $n$. The points from
the set $A\setminus N$ are called {\it recurrent}.

\begin{remark}\label{Towers} Assume that all points from a given set $A$ 
are recurrent for a Borel automorphism $T$. Then for $x\in A$, let $n(x) = 
n_A(x)$ be the smallest positive integer such that $T^{n(x)}x \in A$ and 
$T^ix \notin A,\ 0< i < n(x)$. Let $C_k = \{x\in A\  \vert\  n_A(x) =k\},\ k
\in \N $, then $ T^kC_k \subset A $ and $ \{T^iC_k\  \vert\ i=0,...,k-1\} $ 
are pairwise disjoint. Note that some $C_k$'s may be empty. Since $T^nx\in 
A$ for infinitely many positive and negative $n$, we obtain
$$
 \bigcup_{n\ge0}T^nA =  \bigcup_{n\in \Z}T^nA = X
 $$
and
$$
X =  \bigcup_{n\ge0}T^nA  = \bigcup_{k=1}^\infty\bigcup _{i=0}^{k-1} 
T^iC_k.
$$

This union decomposes $X$ into $T$-towers $\xi_k = \{T^iC_k \ \vert \ 
i=0,..., k-1\},\ k \in \N$, where $C_k$ is the base and $T^{k-1}C_k$ is the 
top of $\xi_k$. Depending on $T$,  the set of these towers $\{\xi_k\}$ can
 be, in general,  countable.
\end{remark}

\begin{lemma}\label{lemma markers} Let $T \in Aut(X,\mathcal{B})$ be an 
aperiodic Borel automorphism of a standard Borel space $(X,\mathcal{B})$. 
Then there exists a sequence $(A_n)$ of Borel sets such that

(i) $X=A_0 \supset A_1 \supset A_2\supset \cdots,$

(ii) $\bigcap_n A_n =\emptyset,$

(iii) $A_n$ and $X\setminus A_n$ are complete $T$-sections, $n\in \N$,

(iv)  every point in $A_n$ is recurrent, $n\in \N$. 
\end{lemma}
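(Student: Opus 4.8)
The plan is to build the sequence $(A_n)$ by a recursive marker construction, at each stage passing from $A_n$ to $A_{n+1}$ by keeping only a carefully chosen "thinned" subset of $A_n$ that is still a complete section (together with its complement) and all of whose points are recurrent. First I would note that by the Poincar\'e recurrence lemma for Borel automorphisms (quoted above: for any $A\in\B$ there is a wandering set $N\in\mathcal W(T)$ with $A\setminus N$ consisting of recurrent points), it suffices to arrange (i)--(iii) with $A_n$ a complete section and then discard a wandering set; but one must be careful that discarding this wandering set does not destroy the complete-section property of $A_n$ or $X\setminus A_n$. Since $T$ is aperiodic, the $\sigma$-ideal $\mathcal W(T)$ contains no complete section, so removing a wandering set from a complete section leaves a complete section, and similarly the complement only grows. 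Thus (iv) can be secured at each stage "for free" once (i)--(iii) hold, and I would fold it in from the start by always working with the recurrent part of whatever section I produce.

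The core of the argument is the inductive step producing $A_{n+1}\subsetneq A_n$ with $A_{n+1}$ and $X\setminus A_{n+1}$ still complete sections, and such that $\bigcap_n A_n=\emptyset$. For this I would use the Kakutani tower decomposition of Remark \ref{Towers} applied to $A_n$: since every point of $A_n$ is recurrent, $X$ decomposes into $T$-towers $\xi_k=\{T^iC_k:0\le i\le k-1\}$, $k\in\N$, with bases $C_k\subset A_n$ and $A_n=\bigsqcup_k C_k$. Inside each tower I select a proper subset of the base (for instance, on a tower of height $k\ge 2$ keep $C_k$ but reinterpret the return structure after deleting, say, every other visit; more simply, from the refined tower structure keep only points whose next return time is large) so that the new base $C_k'\subsetneq C_k$ still meets the tower and the deleted part $C_k\setminus C_k'$ is nonempty and lies between consecutive levels, guaranteeing $X\setminus A_{n+1}$ meets every orbit. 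The standard device here, used in \cite[Chapter 7]{Nadkarni_2013} and \cite{BezuglyiDooleyKwiatkowski_2006}, is the "$2$-marker" or "telescoping" lemma: given a complete section one can Borel-split it into two complete sections; iterating and telescoping yields a decreasing sequence of complete sections with empty intersection. I would invoke exactly this: split $A_n$ into two disjoint complete sections $A_n^{(0)},A_n^{(1)}$, set $A_{n+1}=A_n^{(0)}$ (after passing to its recurrent part), and observe $X\setminus A_{n+1}\supset A_n^{(1)}$ is a complete section.

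For the condition $\bigcap_n A_n=\emptyset$, the point is that in a Borel way one cannot have a fixed orbit point surviving all stages unless the construction is degenerate; concretely, since at stage $n$ the first-return time function $n_{A_n}$ on $A_n$ can be taken to tend to infinity (each thinning at least doubles the minimal gap, or one explicitly removes the level-$1$ base points), for any $x\in X$ the point $x$ eventually falls strictly between two levels of a tall tower over $A_n$, hence $x\notin A_n$ for $n$ large. Making "$n_{A_n}\to\infty$ pointwise" precise is the one genuinely technical spot: I would phrase it as choosing, at stage $n$, the new base to consist of those $a\in A_n$ with $n_{A_n}(a)>$ some threshold together with enough extra points to keep completeness, and then argue via recurrence that every orbit still meets this set. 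The main obstacle I anticipate is precisely this simultaneous control — shrinking $A_n$ fast enough that $\bigcap_n A_n=\emptyset$ while never losing the complete-section property of either $A_n$ or its complement, all uniformly Borel in $x$; this is the heart of the marker lemma and is where I would lean most heavily on the cited constructions rather than redo them from scratch.
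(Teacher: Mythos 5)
Your overall plan (an iterated marker/thinning construction) is the standard route, and since the paper's own proof is only a citation to \cite[Lemma 4.5.3]{BeckerKechris_1996} for (i)--(iii) and to \cite[Chapter 7]{Nadkarni_2013} for (iv), any details must be supplied from scratch; but two of the steps you supply are genuinely flawed. The first is the claim that ``the $\sigma$-ideal $\mathcal{W}(T)$ contains no complete section, so removing a wandering set from a complete section leaves a complete section.'' Aperiodicity does not exclude $T$ being smooth on an invariant piece $Y$, and there a Borel transversal $W\subseteq Y$ is a wandering set that is a complete section for $T|_Y$. More to the point, if an orbit $O$ meets a complete section $A$ only finitely often, then every point of $O\cap A$ is non-recurrent, so deleting the non-recurrent part of $A$ removes $O$ from the section entirely; and even if $\mathcal{W}(T)$ contained no complete section, the inference would still be invalid, since $A\setminus N$ already fails to be a complete section when $N$ merely covers $O\cap A$ for a single orbit $O$, which does not force $N$ to be a complete section. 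Hence (iv) cannot be obtained ``for free'' after (i)--(iii): one must arrange at each stage that every orbit meets $A_n$ infinitely often in both directions, which is precisely the refinement the paper attributes to \cite{Nadkarni_2013}.

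The second gap is in (ii). Forcing the first-return times $n_{A_n}$ to tend to infinity only shows that two distinct points $x$ and $T^kx$ of one orbit cannot both survive into $\bigcap_n A_n$ (take $n$ with minimal return time exceeding $|k|$); it does not exclude one surviving point per orbit, so your argument yields only that $\bigcap_n A_n$ is a partial transversal, not that it is empty. This is repairable: putting $P=\bigcap_n A_n$, the set $P$ meets each orbit at most once while each $A_n$ (once (iv) holds) meets each orbit infinitely often, so replacing $A_n$ by $A_n\setminus P$ preserves (i), (iii), (iv) and kills the intersection --- but as written the step is missing. Finally, the phrase ``keep only points whose next return time is large, together with enough extra points to keep completeness'' is not a detail to be deferred: choosing those extra points in a uniformly Borel way is the entire content of the marker lemma, and this is where you must actually invoke \cite[Lemma 4.5.3]{BeckerKechris_1996} (or reproduce its argument) rather than gesture at it.
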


\begin{proof}
See \cite[Lemma 4.5.3]{BeckerKechris_1996} where (i) - (iii) have been proved in more general 
settings of countable Borel equivalence relations. It is shown in \cite[Chapter 7]{Nadkarni_2013}
that one can refine the choice of $(A_n)$ to get (iv). 
\end{proof} 

\begin{definition}\label{vanishing seq} A sequence of Borel sets satisfying 
conditions (i) - (vi) of Lemma $\ref{lemma markers}$ is called a 
\textit{vanishing sequence of markers}.
\end{definition}

\begin{proposition}\label{lemma periodic approx} Let $T \in Aut(X,
\mathcal{B})$ be an aperiodic Borel automorphism of a standard Borel space 
$(X, \mathcal{B})$. Then there exists a sequence of periodic automorphisms 
$(P_n)$ of $(X,\mathcal{B})$ converging to $T$ in the uniform topology (see 
Definition $\ref{def topology}$).  Moreover, the periodic automorphisms 
$P_n$ can all be taken from $[T]$.
\end{proposition}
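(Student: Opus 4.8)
The plan is to use the vanishing sequence of markers $(A_n)$ from Lemma~\ref{lemma markers} to build the periodic automorphisms directly. Fix $n$ and apply the Kakutani tower construction of Remark~\ref{Towers} to the complete section $A_n$, all of whose points are recurrent by part (iv) of the lemma. This decomposes $X$ into countably many $T$-towers $\xi_k = \{T^iC_k : 0 \le i \le k-1\}$, $k \in \N$, where $C_k = \{x \in A_n : n_{A_n}(x) = k\}$. On each tower I would define $P_n$ to act as $T$ on the levels $T^iC_k$ with $0 \le i \le k-2$, sending $T^iC_k$ to $T^{i+1}C_k$, and to act on the top level $T^{k-1}C_k$ by collapsing back to the base, $P_n(T^{k-1}x) = x$ for $x \in C_k$. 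Then $P_n$ is a bijection of each tower onto itself (a $k$-cycle on each orbit within $\xi_k$), hence a periodic Borel automorphism of $X$ with period $k$ on $\xi_k$; it is Borel because the sets $C_k$ and their images under powers of $T$ are Borel, and it visibly lies in $[T]$ since $P_n x \in \{T^{-(k-1)}x, Tx\} \subset \Gamma x$ for each $x$.

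Next I would estimate the convergence $P_n \to T$ in the uniform topology $\tau$. The set $E(P_n, T)$ on which $P_n$ and $T$ (or their inverses) disagree is contained in the union of the tops and bottoms of all the towers: precisely, $P_n x \ne Tx$ only when $x$ lies in some top level $T^{k-1}C_k$, and $P_n^{-1}x \ne T^{-1}x$ only when $x$ lies in some base level $C_k$. The union $\bigcup_k C_k = A_n$ is a complete section, but the key point is that the union of the \emph{top} levels $\bigcup_k T^{k-1}C_k$ is exactly $T^{-1}(A_n) \cap (\text{return set})$, which is again controlled by $A_n$ in the sense that it is contained in $T^{-1}A_n$. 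Thus $E(P_n, T) \subset A_n \cup T^{-1}A_n$. Now for any $\mu \in \mathcal{M}_1(X)$ I would not be able to conclude $\mu(A_n) \to 0$ directly — $A_n$ is a complete section and can have positive measure — so this is where the argument needs the finer structure: one must instead arrange, as in \cite[Chapter 7]{Nadkarni_2013} and \cite{BezuglyiDooleyKwiatkowski_2006}, that along a \emph{subsequence} and after passing to the quotient (or by a diagonal refinement of the marker sets), the measures of the relevant sets become small. The cleanest route: given finitely many $\mu_1, \dots, \mu_m$ and $\epsilon > 0$, use that $(A_n)$ is decreasing with empty intersection, so $\mu_i(A_n) \to 0$ for each $i$; but wait — a vanishing sequence of markers does have $\bigcap_n A_n = \emptyset$ by (ii), hence $\mu_i(A_n) \downarrow 0$ for every Borel probability measure $\mu_i$. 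Therefore $\mu_i(E(P_n,T)) \le \mu_i(A_n) + \mu_i(T^{-1}A_n) \to 0$ as $n \to \infty$ (the second term tends to $0$ because $T^{-1}A_n$ also decreases to the empty set, $T$ being a bijection).

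Putting this together: given a basic neighborhood $V(T; \mu_1, \dots, \mu_m; \epsilon)$ of $T$ in $\tau$, choose $n$ large enough that $\mu_i(A_n) < \epsilon/2$ and $\mu_i(T^{-1}A_n) < \epsilon/2$ for all $i \le m$; then $\mu_i(E(P_n,T)) < \epsilon$, so $P_n \in V(T; \mu_1,\dots,\mu_m;\epsilon)$. Since this holds for every basic neighborhood, the sequence $(P_n)$ converges to $T$ in the uniform topology, and by construction each $P_n$ is periodic and belongs to $[T]$. The main obstacle I anticipate is the bookkeeping around which levels of the Kakutani towers actually contribute to $E(P_n, T)$ — one must be careful that modifying only the top level of each tower (rather than, say, also the image of the top) really does confine the disagreement set to $A_n \cup T^{-1}A_n$, and that the countably many towers can be handled simultaneously in a single Borel definition of $P_n$; but once the containment $E(P_n,T) \subset A_n \cup T^{-1}A_n$ is established, the convergence is immediate from property (ii) of the markers together with countable additivity of each $\mu_i$.
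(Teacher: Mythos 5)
Your proposal is correct and follows essentially the same route as the paper: the same Kakutani tower decomposition over the vanishing markers $A_n$ from Lemma~\ref{lemma markers}, the same definition of $P_n$ (equal to $T$ off the tower tops, collapsing each top back to its base), and the same use of $\bigcap_n A_n=\emptyset$ to get convergence in $\tau$. Your explicit containment $E(P_n,T)\subset A_n\cup T^{-1}A_n$ is just a more quantitative phrasing of the paper's observation that $P_nx$ eventually stabilizes to $Tx$ for every $x$, and your mid-proof worry about $\mu(A_n)$ resolves correctly since the markers decrease to the empty set.
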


\begin{proof} This propositions was proved in \cite[section 2]{BezuglyiDooleyKwiatkowski_2006}. 
We give the proof here as it will be used  in Lemma 
$\ref{periodic aut}$. 

Let $(A_n)$ be a vanishing sequence of markers for $T$. Then, as 
we have seen above, $A_n$ generates a decomposition of $X$ into 
$T$-towers $\xi_k(n) = \{T^iC_k(n) \ \vert\ i=0,...,k-1\}$ and $\bigcup_k 
C_k(n) = A_n$. Define
\begin{equation}\label{eq 4.1}
P_nx =\left\{ \begin{array}{ll}
Tx, & {\rm if}\  x\notin B_n= \bigcup_{k=1}^\infty T^{k-1}C_k(n)\\
\\
T^{-k+1}x, & {\rm if}\  x\in T^{k-1}C_k(n),\ {\rm for\ some}\ k
\end{array} \right.
\end{equation}
Then $P_n$ belongs to $[T]$, and the period of $P_n$ on $\xi_k(n)$ is $k$.  
Note that $P_n$ equals $T$ everywhere on $X$ except the set 
$B_n$ which is the union of the tops of the towers.

It follows from Lemma $\ref{lemma markers}$ that $(A_n)$ is a decreasing 
sequence of Borel subsets such that $\bigcap_n A_n =\emptyset$. This 
means that for any $x\in X$ there exists $n(x)$ such that $x\notin A_{n},\ 
n \ge n(x)$. Moreover, if for some $x\in X$, $P_nx = Tx$, then $P_{n+k}x 
= Tx$ for all $k$. These facts prove that, for every $x$, the sequence  
$(P_n x)$ is  eventually stabilized and it is and equal to $Tx$. Hence,
 $P_n$ converges to $T$ in the topology $\tau$. 

\end{proof}

Lemma $\ref{periodic aut}$ is  well known   in the theory of dynamical systems. We include it here for convenience of the reader.

\begin{lemma}[folklore] \label{periodic aut}  (1) Let $P$ be a periodic 
automorphism of a standard Borel space $(X, \mathcal B)$. Then any 
cocycle of $P$ is a coboundary. 

(2) The same result holds for a smooth automorphism of 
a standard Borel space $(X, \mathcal B)$. 
\end{lemma}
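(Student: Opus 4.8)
The plan is to prove (1) first by a direct averaging-free construction, then deduce (2) by essentially the same idea applied to a transversal. For part (1), let $P$ be a periodic automorphism of $(X,\mathcal B)$ and let $a\in Z^1(\langle P\rangle\times X, G)$. The period function $x\mapsto k(x)$, where $k(x)$ is the least positive integer with $P^{k(x)}x = x$, is Borel, so $X = \bigsqcup_{k\ge 1} X_k$ with $X_k = \{x : k(x) = k\}$ a Borel partition into $P$-invariant sets. Working on a single piece $X_k$, I would pick a Borel transversal $B_k\subset X_k$ for the (smooth, periodic) restriction $P|_{X_k}$, so that $X_k = \bigsqcup_{i=0}^{k-1} P^i B_k$ and each $x\in X_k$ is uniquely $P^i y$ for some $y\in B_k$, $0\le i\le k-1$. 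Define $c$ on $X_k$ by $c(P^i y) := a(P^i, y)$ for $y\in B_k$, $0\le i\le k-1$ (and $c(y) := a(\mathbbm 1,y) = 0$ for $y\in B_k$, which is the $i=0$ case). This $c$ is Borel because the decomposition $x = P^{i(x)}(y(x))$ is Borel.

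The key computation is to verify $a(P, x) = c(Px) - c(x)$ for every $x\in X_k$. Write $x = P^i y$ with $y\in B_k$. If $0\le i\le k-2$, then $Px = P^{i+1}y$ with $i+1\le k-1$, so by the cocycle identity \eqref{eq cocyc 1},
\[
c(Px) - c(x) = a(P^{i+1},y) - a(P^i,y) = a(P\cdot P^i, y) - a(P^i,y) = a(P, P^i y) = a(P,x).
\]
If $i = k-1$, then $Px = P^k y = y$, so $c(Px) - c(x) = a(\mathbbm 1,y) - a(P^{k-1},y) = -a(P^{k-1},y)$; on the other hand the cocycle identity gives $0 = a(\mathbbm 1,y) = a(P^k,y) = a(P\cdot P^{k-1},y) = a(P, P^{k-1}y) + a(P^{k-1},y) = a(P,x) + a(P^{k-1},y)$, so $a(P,x) = -a(P^{k-1},y) = c(Px) - c(x)$ again. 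Assembling the $c$'s over all $k$ gives a global Borel function $c:X\to G$ with $a(P,x) = c(Px)-c(x)$; since a cocycle of $\langle P\rangle$ is determined by its value on the generator $P$ (Remark \ref{R_a}), this means $a$ is a coboundary.

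For part (2), let $T$ be a smooth automorphism, so there is a Borel transversal $A$ with $X = \bigsqcup_{i\in\Z} T^i A$, and every $x$ is uniquely $T^{i(x)}(y(x))$ with $y(x)\in A$, $i(x)\in\Z$, both Borel. Given $a\in Z^1(\langle T\rangle\times X,G)$, set $c(T^i y) := a(T^i, y)$ for $y\in A$, $i\in\Z$; this is Borel, and the same one-line cocycle-identity manipulation as above (now with no wraparound, since the orbit is a full copy of $\Z$) shows $a(T,x) = c(Tx)-c(x)$ for all $x$, so $a$ is a coboundary. The main obstacle — and it is a mild one — is the Borelness of the transversal and of the orbit-coordinate maps $x\mapsto(y(x),i(x))$; this is exactly the content of smoothness (existence of a Borel transversal) together with the Lusin–Novikov uniformization theorem applied to the Borel set $\{(x,y) : y\in A,\ x\in \langle T\rangle y\}$, which makes $y(x)$ and hence $i(x)$ Borel. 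Everything else is the routine telescoping bookkeeping indicated above.
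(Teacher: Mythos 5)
Your construction coincides with the paper's: on each tower of period $k$ with base $C_k$ the paper sets $f_k(x)=a(P^j,P^{-j}x)$ for $x\in P^jC_k$ (and $0$ on $C_k$), which is exactly your $c(P^iy)=a(P^i,y)$, and the wraparound is handled by the same telescoping through $a(P^jP^{-j},x)=0$ --- the only cosmetic difference being that the paper verifies $f_k(P^nx)-f_k(x)=a(P^n,x)$ for all powers $n$ while you check only the generator and invoke Remark \ref{R_a}, both of which suffice. Part (2) is dismissed in the paper as ``proved analogously,'' which is precisely your transversal argument, so the proposal is correct and essentially identical to the paper's proof.
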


\begin{proof}
(1) Let  $a \in Z^1(P \times X, G)$, be a cocycle for $P$ taking value in l.c.s.c. 
abelian group $G$ with identity $0$. Denote by $C_k$ the base of 
$P$-tower $\xi_k$ where $P$ has period $k$. Then $X$ is the disjoint union
of $\xi_k$. 
We define a Borel function $f : X \rightarrow G$ by setting $f(x) = f_k(x), x
 \in \xi_k , k \in \mathbb{N}$,   where 
$$
f_{k} (x) =\left\{ \begin{array}{ll}
a(P^j,P^{-j}x) , & {\rm if}\  x\in P^j C_k, \, \, {\rm for}\ 1 \leq j \leq k-1 \\
\\
0, & {\rm if}\  x\in C_k
\end{array} \right.
$$
It suffices to check that $a$ is a coboundary on every tower $\xi_k$. 
For every $x\in X$, there exist $k$ and $j\in \{0, ... ,k-1\}$ such that 
$x \in P^jC_k$.
Let $n \in \mathbb N$, then $P^nx \in P^mC_k$ where $n = m - j + ik$.
Therefore, we have
\begin{equation*}
    f_{k} (P^n x) - f_{k} (x) = a(P^m, P^{-j} x) - a(P^j, P^{-j} x). 
\end{equation*}
 Since, $0=a(P^j P^{-j},x)= a(P^j,P^{-j}x) + a(P^{-j}, x)$, we obtain 
\begin{equation*}
    f_{k} (P^n x) - f_{k} (x) = a(P^m, P^{-j} x) + a(P^{-j}, x) 
    = a(P^m P^{-j}, x) = a(P^n, x).
\end{equation*}
Hence, $a$ is a coboundary. 

Statement (2) is proved analogously.
\end{proof} 




Let $T \in Aut(X, \B)$ and $f$  be a Borel function on $X$. By $a(f)$ we
denote the cocycle generated by $f$:
\begin{equation}\label{ eq a(f)}
a(f)(j, x) =  
\begin{cases} 
 f(x)+ f(Tx)+...+f(T^{j-1}x), & \quad j\geq 1 \\ 
0, & \quad j = 0\\ 
-f(T^{-1}x) - f(T^{-2}x) - ... -f(T^{j}x), & \quad j\leq-1
\end{cases}
\end{equation}

\begin{lemma}\label{lem conv}
Suppose a sequence of Borel functions $(f_i)$ converges to $f$ in the 
topology $\mc T$. Then the sequence of cocycles $a(f_i)$ converges to
$a(f)$, i.e.,  for every $j\in \Z$,
 $$
 a(f_i)(j, x) \stackrel{\mc T} \longrightarrow  a(f)(j, x), \quad i  \to \infty.
$$
\end{lemma}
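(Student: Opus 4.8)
\textit{Proof plan.} The plan is to fix $j \in \Z$ and prove that the function $x \mapsto a(f_i)(j,x)$ converges to $x \mapsto a(f)(j,x)$ in $\mathcal T$. The map $h \mapsto a(h)(j,\cdot)$ is a group homomorphism of $\mathcal F(X,G)$ into itself, since the defining formula for $a(h)(j,\cdot)$ is additive in $h$; in particular $a(f_i)(j,\cdot) - a(f)(j,\cdot) = a(f_i-f)(j,\cdot)$. Because $f_i-f \to 0$ in $\mathcal T$ (which is a topological group topology by Theorem \ref{thm sep top group}), it suffices to prove: if $g_i \to 0$ in $\mathcal T$ then $a(g_i)(j,\cdot) \to 0$ in $\mathcal T$. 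For $j=0$ this is trivial, and for $j\ge 1$ (the case $j\le -1$ is symmetric) the defining formula reads $a(g_i)(j,x) = \sum_{\ell=0}^{j-1} g_i(T^{\ell}x)$, a sum of $j$ terms of the form $g_i\circ T^{\ell}$ with $\ell$ ranging over a fixed finite set depending only on $j$.

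The two ingredients I would use are: (a) for each fixed $\ell$, composition $h \mapsto h\circ T^{\ell}$ is $\mathcal T$-continuous on $\mathcal F(X,G)$, because for every $\mu \in \mathcal M_1(X)$ and every Borel set $B$ one has $\mu(T^{-\ell}B) = (\mu\circ T^{-\ell})(B)$, and $\mu\circ T^{-\ell}$ is again a Borel probability measure (the only property of $T$ used is that it is a Borel automorphism; no invariant measure is available, nor needed); and (b) finite addition is $\mathcal T$-continuous, again by Theorem \ref{thm sep top group}. Quantitatively, given a basic neighbourhood $U(0;\mu_1,\dots,\mu_n;\epsilon,\delta)$ of the zero function, apply the hypothesis $g_i\to 0$ to the enlarged finite family $\{\mu_m\circ T^{-\ell} : 1\le m\le n,\ 0\le \ell\le j-1\}$ with parameters $\epsilon/j$ and $\delta/j$, so that for all large $i$ and all $m,\ell$
\[
\mu_m\bigl(\{x : |g_i(T^{\ell}x)| > \epsilon/j\}\bigr) = (\mu_m\circ T^{-\ell})\bigl(\{y : |g_i(y)| > \epsilon/j\}\bigr) < \delta/j .
\]
Since $|\cdot|$ is translation invariant we have $|\sum_{\ell} z_{\ell}| \le \sum_{\ell}|z_{\ell}|$, hence $\{x : |a(g_i)(j,x)| > \epsilon\} \subset \bigcup_{\ell=0}^{j-1}\{x : |g_i(T^{\ell}x)| > \epsilon/j\}$; summing the displayed estimates over $\ell$ gives $\mu_m(\{x : |a(g_i)(j,x)| > \epsilon\}) < \delta$ for every $m$ and all large $i$, i.e. $a(g_i)(j,\cdot) \in U(0;\mu_1,\dots,\mu_n;\epsilon,\delta)$ eventually.

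I do not expect a genuine obstacle; the argument is bookkeeping around two continuity facts. The one subtle point is the one flagged in (a): one cannot push a shift $T^{\ell}$ through a measure via $T$-invariance, since in the Borel setting there is no distinguished invariant measure, so instead one passes to the pushforward $\mu_m\circ T^{-\ell}$, which remains in $\mathcal M_1(X)$ precisely because $T$ is bimeasurable; as $j$ is fixed only finitely many pushforwards occur, so a single basic neighbourhood of $\mathcal T$ handles them all. Equivalently, and perhaps most cleanly, $h \mapsto a(h)(j,\cdot)$ is a finite composite of the $\mathcal T$-continuous maps $h\mapsto h\circ T^{\ell}$ with the continuous finite-sum map, hence $\mathcal T$-continuous, and a continuous homomorphism of topological groups is sequentially continuous.
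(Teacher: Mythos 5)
Your proposal is correct and follows essentially the same route as the paper's proof: both reduce the claim for fixed $j$ to the pointwise estimate $|a(f_i)(j,x)-a(f)(j,x)|\le\sum_{\ell=0}^{j-1}|f_i(T^{\ell}x)-f(T^{\ell}x)|$, enlarge the given finite family of measures by the finitely many pushforwards under powers of $T$, and apply the hypothesis with parameters $\epsilon/j$, $\delta/j$ together with a union bound. Your framing via continuity of $h\mapsto h\circ T^{\ell}$ and of finite sums is only a cosmetic repackaging of the paper's direct computation (and your use of $\mu_m\circ T^{-\ell}$ rather than the paper's $\mu_i\circ T^{k}$ is, if anything, the more carefully stated version of the same pushforward trick).
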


\begin{proof}
To prove the lemma, we need to show that for any positive $\epsilon$ and 
$\delta$ and for any finite set of Borel probability measures $\mu_1, ... ,
\mu_n$ there exists $N\in N$ such that 
\be\label{eq nts}
\mu_l(\{x \ : \ | a_i(j, x) - a(j,x) | > \epsilon \}) <\delta, \quad l = 1,..., n.
\ee

Fix a natural number $j$ (the case of negative $j$ is considered similarly). 
Take a finite set of Borel probability measures $\mu_1, ... ,\mu_n$. Define
$\{\nu_1, ... ,\nu_s\} = \{\mu_i \circ T^k \ :\ i =1,... ,n,\
 k = 0, 1, ... , j-1\}$ (here $s = ij$). It follows from the condition of the
lemma  that for any positive $\epsilon_1$ and $\delta_1$ there exists 
$N = N(\epsilon_1, \delta_1)\in \N $ such that for all $i >N$
 \be\label{eq nu_l}
\nu_l(\{x \ : \ | f_i - f | > \epsilon_1   \}) <\delta_1, \quad l = 1,..., s.
\ee
 
For convenience, we introduce the following sets
 $$
 A_k(i, \epsilon_1) =  \{x \ : \ | f_i\circ T^k - f\circ 
T^k | > \epsilon_1\}, \quad  k = 0,..., j-1, 
$$ 
and
$$
C(i, \epsilon) = \{x \ : \ | a_i(j, x) - a(j,x) | > \epsilon \}.
$$
Denote 
$$
S(i, \epsilon) = \{x \ :\ \sum_{k=0}^{j-1} | f_i(T^k x) - f(T^kx) | > \epsilon.
$$
Since 
$$| a_i(j, x) - a(j,x) |  \leq \sum_{k=0}^{j-1} | f_i(T^k x) - f(T^kx) |,
$$
we see that  $C(i, \epsilon) \subset S(i, \epsilon)$. Take $\epsilon_1 =
\dfrac{\epsilon}{j}$ and $\delta_1 =\dfrac{\delta}{j}$; 
then it follows from the above definitions that 
$$
\bigcup_{k=0}^{j-1} A_k\left(i, \frac{\epsilon}{j}\right)
 \supset S(i, \epsilon).
$$
We need to prove that $\mu_l(C(i, \epsilon)) <\delta$ for all sufficiently 
large  $i$ and $l = 1,..., n$. Indeed, it follows from  \eqref{eq nu_l} that, 
for $i > N(\epsilon_1, \delta_1)$, 
$$
\ba
\mu_l(C(i, \epsilon)) \leq & \ \mu_l(S(i, \epsilon))  \\ 
\leq &\  \sum_{k=0}^{j-1} \mu_l\left(A_k\left(i, \frac{\epsilon}{j}\right)\right)\\
= & \ \sum_{k=0}^{j-1} \mu_l\circ T^k  \left(A_0\left(i, \frac{\epsilon}{j}
\right)\right)\\
< &\ j \frac{\delta}{j} = \delta.
\ea
$$
This proves the lemma.
\end{proof}

\begin{proposition}\label{prop density cbd} Let $a: \mathbb{Z} \times X 
\rightarrow G$ be a cocycle of an aperiodic $T \in Aut(X,\mathcal{B})$. 
Then there exists a sequence of coboundaries $(a_n)$ of $T$ such that 
$(a_n)$ converges to $a$ in the topology $\mathcal{T}$ (see Remark 
$\ref{equiv top}$ and Definition $\ref{def cocycle topology}$). 
\end{proposition}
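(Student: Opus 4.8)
The plan is to approximate $a$ by coboundaries of $T$ built from the periodic automorphisms $P_n \in [T]$ constructed in the proof of Proposition~\ref{lemma periodic approx}. Write $f(x):=a(1,x)$, so that $a=a(f)$ in the sense of \eqref{eq z-cocycle}. By Lemma~\ref{lem conv}, to prove that a sequence of coboundaries $a_n=a(g_n)$ of $T$ converges to $a$ in $\mathcal T$ it is enough to arrange that $g_n\to f$ in $\mathcal T=\tau_1$. So the whole task reduces to producing Borel functions $g_n:X\to G$ with $a(g_n)\in B^1(\Z\times X,G)$ and $g_n\to f$ in $\mathcal T$.

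First I would fix a vanishing sequence of markers $(A_n)$ for $T$ (Lemma~\ref{lemma markers}, Definition~\ref{vanishing seq}) and the induced decomposition of $X$ into $T$-towers $\xi_k(n)=\{T^iC_k(n):0\le i\le k-1\}$ with $\bigcup_kC_k(n)=A_n$; recall from the proof of Proposition~\ref{lemma periodic approx} that $P_n$ coincides with $T$ off the union of tower tops $B_n:=\bigcup_kT^{k-1}C_k(n)$. Extend $a$ to a cocycle $\widehat a$ of the full group $[T]$ (possible since the aperiodicity of $T$ makes the $\Z$-action free; see the remark following Remark~\ref{R_a}) and restrict $\widehat a$ to the group $\{P_n^m:m\in\Z\}$. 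Since $P_n$ is periodic, Lemma~\ref{periodic aut}(1) yields a Borel function $c_n:X\to G$ with $\widehat a(P_n^m,x)=c_n(P_n^mx)-c_n(x)$ for all $m,x$ (explicitly $c_n(x)=a(T^i,T^{-i}x)$ on $T^iC_k(n)$). Let $a_n$ be the coboundary of $T$ generated by $c_n$, that is $a_n(j,x):=c_n(T^jx)-c_n(x)$, and put $g_n:=a_n(1,\cdot)=c_n\circ T-c_n$; then $a_n=a(g_n)\in B^1(\Z\times X,G)$ by construction.

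The decisive point is that $g_n=f$ off the set $B_n$: for $x\notin B_n$ we have $P_nx=Tx$, hence $g_n(x)=c_n(P_nx)-c_n(x)=\widehat a(P_n,x)=\widehat a(T,x)=a(1,x)=f(x)$. Thus $\{x:|g_n(x)-f(x)|>\epsilon\}\subset B_n$ for every $\epsilon>0$. It remains to note that $\mu(B_n)\to0$ for every $\mu\in\mathcal M_1(X)$: a point lies at the top of its $T$-tower precisely when its $T$-image returns to $A_n$, so $B_n=T^{-1}(A_n)$ and $\mu(B_n)=\mu(T^{-1}A_n)\to0$ because $(A_n)$ decreases to $\emptyset$ and $E\mapsto\mu(T^{-1}E)$ is a Borel probability measure. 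Consequently, for any finite family $\mu_1,\dots,\mu_m\in\mathcal M_1(X)$ and any $\epsilon,\delta>0$ we get $\mu_l(\{x:|g_n(x)-f(x)|>\epsilon\})\le\mu_l(B_n)<\delta$ for $n$ large, i.e. $g_n\to f$ in $\tau_1=\mathcal T$; Lemma~\ref{lem conv} then gives $a_n=a(g_n)\to a(f)=a$ in $\mathcal T$, which completes the proof.

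The main obstacle is bookkeeping rather than a single hard estimate: one must verify that $\widehat a$ restricted to $\{P_n^m\}$ really is a $P_n$-cocycle (so that Lemma~\ref{periodic aut} applies), that the resulting $c_n$ makes $g_n$ agree with $f$ \emph{exactly} off the tops $B_n$ (this uses $P_nx=Tx$ there together with the identity defining $\widehat a$), and — most importantly — that $B_n$ is simultaneously small for \emph{every} Borel probability measure, which is precisely where the property $\bigcap_nA_n=\emptyset$ of a vanishing marker sequence is indispensable. Once these are in place, the convergence is immediate from Lemma~\ref{lem conv}.
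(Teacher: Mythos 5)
Your proposal is correct and follows essentially the same route as the paper: approximate $T$ by the periodic $P_n\in[T]$ from Proposition~\ref{lemma periodic approx}, use Lemma~\ref{periodic aut} to trivialize the restricted cocycle, observe that the resulting $T$-coboundary generator agrees with $f=a(1,\cdot)$ off the tower tops, and conclude via Lemma~\ref{lem conv}. The only (harmless) difference is in the measure estimate: you identify the exceptional set directly as $B_n=T^{-1}(A_n)$ and use $A_n\downarrow\emptyset$, whereas the paper works with the stabilization sets $K_n=\{x: P_ix=Tx \text{ for all } i\ge n\}$ increasing to $X$ --- your version is, if anything, slightly cleaner.
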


\begin{proof} 
It is obvious that, for any cocycle $a: \mathbb{Z} \times X \rightarrow G$ of  $T \in Aut(X,\mathcal{B})$, there is a Borel function $f$ such that 
$a = a(f)$, i.e., 
\begin{equation}\label{eq a via f}
a(j,x) = \left\{\begin{matrix}
f(x)+ f(Tx)+...+f(T^{j-1}x), & \quad j\geq 1 \\ 
0, & \quad j = 0\\ 
-f(T^{-1}x) - f(T^{-2}x) - ... -f(T^{j}x), & \quad j\leq-1
\end{matrix}\right.
\end{equation}

In the proof,  we will use the notation introduced in this section above. 
By Proposition $\ref{lemma periodic approx}$, for every $T \in Aut(X,
\mathcal{B})$, there exists a sequence of periodic automorphisms $(P_i)$ 
of $(X,\mathcal{B})$ converging to $T$ in the topology $\tau$ (see
 Definition $\ref{def topology}$). It can be easily seen that $P_i$ and 
 $P_{i+1}$ agree (that is $P_ix = P_{i+1}x$ everywhere except on top of the
 $T$-towers $\xi_k(i)$ built over $A_i$ where $(A_i)$ is a vanishing 
 sequence of markers. Let $D_i$ denote the union of the top levels of 
 $T$-towers $\xi_k(i)$. Since $D_i \supseteq D_{i+1}$ and $\bigcap_i
 A_i = \emptyset$ , we see that   $\bigcap_i D_i = 
 \emptyset$.  Therefore, for every $x$, there exists a smallest number 
 $n(x)$ such that, for all 
 $i \geq n(x)$, $P_ix$ are all the same and equal to $Tx$. 

Next, we define  $K_j := \{x \in X : n(x) = j\}$, $j\in \mathbb{N}$.
 Note that  $K_j \subset K_{j+1}$ and $\bigcup_j K_j = X$. 
 Fix a finite set of probability measures $\mu_1,\mu_2, ... \mu_n
  \in \mathcal{M}_1(X)$ and take $\epsilon > 0$. Then there exists
 $j \in \mathbb{N}$, such that $\mu_l(K_j) > 1- \epsilon$ for $l = 1,2,...,n$. 

We recall that  the periodic automorphisms $P_i$ are taken from the 
full group $[T]$ and therefore the cocycle $a \in Z^1(\Gamma \times X, T)$
can be extended to $P_i$. This observation allows us to define
$$
f_n(x) := a (P_n, x), \quad \forall x \in X.
$$
By Lemma $\ref{periodic aut}$, every cocycle of $P_n$ is a
 coboundary. Hence there exists a sequence of Borel functions $g_n :X 
 \rightarrow G$ such that $f_n(x) = g_n(x) - g_n(P_nx)$. Moreover, 
recall that $P_n x= Tx$ for every $x \in K_n$. As a result, for every $x \in 
K_n$ we have  $f_n(x) = a (P_n, x) = a(T, x) = f(x)$. We further define a 
sequence of Borel  functions $F_n :X \rightarrow G$ as follows: 
$$
F_n(x)= g_n(x)- g_n(Tx), \quad \forall x \in X.
$$ 
By definition, the function $F_n$ is a $T$-coboundary for every $n$. 

lt remains to  show that $F_n \stackrel{\mathcal{T}}{\longrightarrow} f$ 
(see Definition $ \ref{def cocycle topology}$). For this, we prove that for 
every $\epsilon,  \delta >0$ there exists $n \in \mathbb{N}$ such that 
\begin{equation}\label{eq 4.2}
    \mu_l(\{x: |F_n(x) - f(x)| > \epsilon\}) < \delta \,,\,\,\, \forall \, l=1,2,...,n.
\end{equation}
Note that if $x \in K_n$, then $f_n(x) - f(x)$ and
$$
|F_n(x) - f(x)| = |g_n(P_nx) - g_n(Tx)| = 0.
$$ 
 Hence 
 $$ \mu_l(\{x: |F_n(x) - f(x)| > \epsilon\}) \subset X\setminus K_n, 
 \quad \forall \, l=1,2,...,n.
 $$ 
For every $\delta >0$, we can find $N$ such that for all $n \geq N$, 
$\mu_l(X \setminus K_n)< \delta$  for $ l = 1,2,...,n$, and then (\ref{eq 4.2}) follows. 

To finish the proof, we define the sequence  of  $T$-coboundaries $(a_n)$  
by  functions $F_n$ as in  \eqref{eq a via f}.  It follows from Lemma 
\ref{lem conv} that the converges of $(F_n)$ to the function $f$
in the topology $\mathcal T$ implies that $a_n(F_n) $ converges to 
$a(f)$ in $\mathcal T$.  It completes the proof.
\end{proof}

\textit{Proof of Theorem \ref{thm dense}} In light of Theorem 
$\ref{thm hyperfinite}$, Proposition  $\ref{prop density cbd}$ implies 
Theorem  $\ref{thm dense}$. \hfill{$\Box$}


\section{Cocycle over odometer action}\label{sec 5}

The goal of this section is to describe explicitly cocycles defined by $2$-odometers. In fact, the results of this section can be used for arbitrary 
uniquely ergodic Borel automorphisms since they are Borel isomorphic
to the 2-odometer. We will use the following definition of the 2-odometer
which is equivalent to Definition \ref{def odo}.
 
Consider the space $(X = \{0,1\}^\mathbb{N}, \mathcal{B})$, where 
$\mathcal{B}$ is the Borel sigma-algebra generated by cylinder sets. Let 
$\Gamma \subset  Aut(X,\mathcal{B})$ be the group of Borel
 automorphisms
generated by automorphisms $\langle \delta_1,   ..., \delta_n,.... 
\rangle$ where $\delta_n$ acts on $x = (x_i)\in X$ by the formula:
\begin{equation}\label{odometer}
    (\delta_n x)_i  = \left\{
        \begin{array}{ll}
            x_i & \quad i \neq n \\
            x_i + 1 \,\, (\rm{mod} \,\, 2)& \quad i = n. 
        \end{array}
    \right.
\end{equation}
We see that every $\delta_n$ is periodic, $\delta_n^2 = \mathbbm 1$, 
and any two generators $\delta_n$, $\delta_k$ commute.  
Obviously, the orbit equivalence relation $E_X(\Gamma)$ is 
\textit{hyperfinite} and preserves the product measure $\mu = 
\bigotimes_i \mu_i$ where $\mu_i(\{0\}) = \mu_i(\{1\}) = 1/2$. The  
group $\Gamma$ is orbit equivalent to the 2-odometer acting on  
$(\{0,1\}^ \mathbb{N}, \mathcal{B})$.

Cocycles over odometers have been extensively studied in ergodic 
theory. We refer, in particular,  to the papers \cite{Golodec_1969},  \cite{GolodetsSinelshchikov_1987} where the authors proved  several important results. Firstly, it was shown that
every cocycle is cohomologous to a cocycle that takes values in a 
countable subgroup $H$ of $G$, and, secondly, cocycles with dense
range are unique in the following sense: let $\alpha$ and $\beta$ be 
two cocycles with values in $G$ such that the skew products 
$\Gamma(\alpha)$ and $\Gamma(\beta)$ are ergodic, then 
 there exists an automorphism $R$ in the normalizer $N[\Gamma]$
  such that 
 $\alpha$ and $\beta\circ R$ are cohomologous (see Introduction).  

We use a similar approach  
 to prove the first result in the  setting of Borel dynamics. We do not
know whether the second result holds. 
We remark that for consistency with other parts of this paper
our proof is given for an abelian group $G$ though the same proof 
works for non-abelian groups. 

We reprove the following statement that was implicitly formulated
in \cite{Golodec_1969}. 

\begin{proposition} \label{prop Golodets}  Let the group $\Gamma
= \langle \delta_1, ... \delta_n, ...\rangle$ of Borel automorphisms of 
$\{0,1\}^{\mathbb N}$ be
defined as in \eqref{odometer}. Then for every cocycle $c : \Gamma 
\times X \to G$, there exists a sequence of Borel functions 
$(f_n : X \to G)_{n\in \N}$ such that 
\begin{equation} \label{eq_cocycle c}
\ba
c(\delta_n,x) =  & x_1f_1(\delta_n x) + ...+
x_{n-1} f_{n-1}(\delta_n x)  \\ 
+ & (-1)^{x_n} f_n(x) -x_{n-1} f_{n-1}(x)- ...-x_1 f_1(x),
 \ea
\end{equation}
where the function $f_n$  is invariant with respect to $ \delta_1, 
\delta_2, , ..., \delta_n $,   $n \in \mathbb{N}$.

Conversely, let $(f_n : X \to G)_{n\in \N}$, be a sequence of Borel maps such that each $f_n$ is invariant with respect to $\delta_1, \delta_2, , ..., \delta_n$. Then $(f_n)_{n \in \N}$ generates a  cocycle $c$ according to \eqref{eq_cocycle c}.
\end{proposition}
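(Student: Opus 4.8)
The plan is to build the functions $f_n$ recursively, exploiting the structure of $\Gamma$ as generated by commuting involutions $\delta_n$ with $\delta_n$ acting only on the $n$-th coordinate. The key observation is that $\delta_n$ fixes all coordinates $x_i$ with $i\neq n$, so for any cocycle $c$ the quantity $c(\delta_n,x)$ depends on $x$ in a controlled way once we understand its behavior under the other generators. First I would set, for each $n$, $g_n(x) := c(\delta_n,x)$, and record the two basic constraints coming from the cocycle identity: since $\delta_n^2=\mathbbm 1$, equation \eqref{eq cocyc 1} gives $0 = c(\delta_n,\delta_n x) + c(\delta_n,x)$, i.e. $g_n(\delta_n x) = -g_n(x)$; and since $\delta_n\delta_k = \delta_k\delta_n$, we get the ``closedness'' relation
\begin{equation*}
g_n(\delta_k x) - g_n(x) = g_k(\delta_n x) - g_k(x), \qquad k\neq n.
\end{equation*}
These two relations are exactly what one needs, because conversely a family $(g_n)$ satisfying them extends uniquely to a cocycle of $\Gamma$ (every $\gamma\in\Gamma$ is a finite product of distinct $\delta_i$'s, and the extension is well-defined precisely by commutativity plus the involution relation).

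The heart of the argument is then the following extraction: I claim $f_n$ can be defined by the formula obtained from solving \eqref{eq_cocycle c} for $f_n(x)$, namely (when $x_n=0$)
\begin{equation*}
f_n(x) := g_n(x) + \sum_{j=1}^{n-1} x_j\big(f_j(x) - f_j(\delta_n x)\big),
\end{equation*}
and extended to $x_n=1$ by invariance under $\delta_n$ — one checks using $g_n(\delta_n x)=-g_n(x)$ that this is consistent, and that the resulting $f_n$ does indeed reproduce \eqref{eq_cocycle c}. Proceeding by induction on $n$: assuming $f_1,\dots,f_{n-1}$ are already constructed, are Borel, and are invariant under $\delta_1,\dots,\delta_{j}$ respectively (so in particular all of them are invariant under $\delta_n$ since $n>j$), I would verify that the $f_n$ just defined is (a) Borel — immediate, being a finite Borel combination; (b) invariant under $\delta_n$ — by the consistency check above; and (c) invariant under $\delta_k$ for each $k<n$ — this is where the commutativity relation on the $g$'s is used. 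For (c) one computes $f_n(\delta_k x) - f_n(x)$ by substituting the defining formula, splitting the sum over $j$ into the terms $j\neq k$ (where $x_j$ is unchanged and one uses that $f_j$ commutes with $\delta_k$ when $j\neq k$, resp. $\delta_k$-invariance of $f_j$ when $j>k$ — but here $j<n$ and $k<n$ so we are within the inductive hypothesis) and the term $j=k$, and then invokes the closedness relation $g_n(\delta_k x)-g_n(x)=g_k(\delta_n x)-g_k(x)$ together with the expression for $g_k$ in terms of $f_1,\dots,f_k$ from the inductive step. The telescoping should collapse to $0$.

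The converse direction is the easier half: given Borel maps $f_n$ with $f_n$ invariant under $\delta_1,\dots,\delta_n$, one defines $c(\delta_n,x)$ by the right-hand side of \eqref{eq_cocycle c}, checks it is Borel, and verifies the involution relation $c(\delta_n,\delta_n x) = -c(\delta_n,x)$ (using $f_n(\delta_n x)=f_n(x)$ and $(-1)^{(x_n+1)\bmod 2} = -(-1)^{x_n}$) and the commutativity compatibility $c(\delta_n,\delta_k x)-c(\delta_n,x) = c(\delta_k,\delta_n x)-c(\delta_k,x)$; then one extends $c$ to all of $\Gamma$ by the cocycle identity along any word in the $\delta_i$'s, the extension being well-defined by exactly these two relations. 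The main obstacle I anticipate is bookkeeping in step (c) of the forward direction: keeping track of which $f_j$ are invariant under which $\delta_k$, and making sure the double substitution (of the $g_n$-formula into $f_n(\delta_k x)-f_n(x)$, and of the inductive $g_k$-formula into the closedness relation) telescopes cleanly rather than leaving cross terms. I would organize this by first writing $g_k(x)$ in the fully expanded form $\sum_{i<k} x_i(f_i(\delta_k x)-f_i(x)) + ((-1)^{x_k}-1)f_k(x) + f_k(x)$ — equivalently the right side of \eqref{eq_cocycle c} with $n=k$ — and then matching terms index by index.
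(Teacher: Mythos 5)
Your proposal is correct and follows essentially the same route as the paper: the converse is handled by directly verifying the involution relation $c(\delta_n,\delta_n x)=-c(\delta_n,x)$ and the commutation compatibility $c(\delta_n\delta_k,x)=c(\delta_k\delta_n,x)$ and then extending along words in the generators, while the forward direction takes $f_n$ to be $c(\delta_n,\cdot)$ on the base cylinder $\{x_1=\dots=x_n=0\}$ extended by $\langle\delta_1,\dots,\delta_n\rangle$-invariance --- which is exactly what your recursive formula produces once your step (c) is in place. In fact your write-up of the forward direction is more detailed than the paper's (which disposes of it in two sentences), and the telescoping you anticipate in (c) does close: the $j<k$ terms cancel against the corresponding terms of $g_k(\delta_n x)-g_k(x)$, the $j>k$ terms vanish by $\delta_k$-invariance of $f_j$, and the $j=k$ term cancels against $(-1)^{x_k}\bigl(f_k(\delta_n x)-f_k(x)\bigr)$ because $(-1)^{x_k}=1-2x_k$ on $\{0,1\}$.

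One parenthetical remark is false and should be deleted: ``so in particular all of them are invariant under $\delta_n$ since $n>j$.'' Invariance of $f_j$ under $\delta_1,\dots,\delta_j$ concerns the generators of index \emph{at most} $j$ and says nothing about $\delta_n$ for $n>j$; indeed $f_j(\delta_n x)\neq f_j(x)$ in general, for otherwise every term $x_j f_j(\delta_n x)-x_j f_j(x)$ in \eqref{eq_cocycle c} would vanish and the formula would collapse to $c(\delta_n,x)=(-1)^{x_n}f_n(x)$. The slip is not load-bearing --- the computation you actually describe in step (c) invokes the correct direction, namely that $f_j$ is $\delta_k$-invariant when $k\le j$ --- but as stated it contradicts the rest of your argument and would mislead a reader.
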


\begin{proof}
Since the transformations $\delta_i, i \in \N,$ are pairwise commuting,
 relation  \eqref{eq_cocycle c} can be extended to all $\gamma = 
 \delta_{i_1} \cdots \delta_{i_k} \in \Gamma$. First we show that if
there exist a sequence of functions $(f_n)$ with the invariance property
as described above, then $(\ref{eq_cocycle c})$  defines a cocycle of 
$\Gamma$. To do this, we show that  
$$ c(\delta_n \delta_k, x) = c(\delta_k \delta_n, x) \,\,\, \mathrm{and} \,\,\, c(\delta^2_n, x) = 0, \,\,\, \mathrm{for\, all} \,\, n,k \in \mathbb{N} \,\,\mathrm{and}\, x \in X.
$$ 
In other words, we need to prove that the definition of $c$ by 
\eqref{eq_cocycle c} gives the same result for two ways to compute
$c(\delta_n\delta_k, x)$.

By the cocycle identity, we have   $c(\delta_n \delta_k, x) = c(\delta_n,
 \delta_k x) + c(\delta_k, x)$. For definiteness,  we can assume that 
 $n > k$. In what follows, we will use the obvious property  
 $(\delta_k x)_i = x_i$ if $i\neq k$ and $(\delta_k x)_k =x_k +1 \  
 (\mathrm{mod}\  2)$. Then 
$$
\ba
 c(\delta_n, \delta_k x) = & x_1 f_1 (\delta_n \delta_k x) +... +
  (\delta_k x)_k f_k (\delta_n \delta_k x) + ...+ 
 x_{n-1}  f_{n-1} (\delta_n \delta_k x) \\ 
 + &  (-1)^{(\delta_k x)_n} f_n (\delta_k x) - 
 x_{n-1} f_{n-1} (\delta_k x)- ... -(\delta_k x_k) f_k (\delta_k x) - ... -
  x_1 f_1 (\delta_k x).
\ea
$$ 
 Using the fact that, for each $i\in \N$, the function $f_i$ is invariant 
with respect to $ \delta_1,  \delta_2, , ..., \delta_i $, we get
$$
\ba
 c(\delta_n,  \delta_k x) = & x_1 f_1 (\delta_n \delta_k x) +... + 
 (\delta_k x)_k f_k (\delta_n x) + ...+ x_{n-1} f_{n-1} (\delta_n x) + 
 (-1)^{(\delta_k x)_n} f_n (x) \\ 
 & - x_{n-1} f_{n-1} (x) - ... -(\delta_k x)_k f_k (x) -x_{k-1} f_{k-1}
  (\delta_k x)- ... - x_1 f_1 (\delta_k x).
\ea
$$ 

Similarly, we have by \eqref{eq_cocycle c}
$$
\ba
c(\delta_k, x) =&  x_1 f_1 (\delta_k x) + ... + x_{k-1} f_{k-1} (\delta_k x) + (-1)^{x_k} f_k (x)\\
& - x_{k-1} f_{k-1} (x)  -...-x_1 f_1 (x).
\ea
$$
After taking the sum and simplifying, we obtain that 
\begin{equation}\label{eq 1.3}
\ba
c(\delta_n \delta_k, x)
= & x_1 f_1 (\delta_n \delta_k x) + ... + (\delta_k x)_k f_k(\delta_n x)
+ ... +x_{n-1}f_{n-1}(\delta_nx) \\
& +   (-1)^{(\delta_k x)_n} f_n (x) - x_{n-1} f_{n-1} (x)
- ... -(\delta_k x)_k f_k (x)\\
& + (-1)^{x_k} f_k (x) - x_{k-1} f_{k-1} (x) -...-x_1 f_1 (x).
\ea
\end{equation} 

Next, we represent $ c(\delta_k \delta_n, x)$ as  $c (\delta_k, 
\delta_n x) + c(\delta_n, x)$ and compute noticing that 
$(\delta_n x)_k = x_k$:
$$
\ba
c(\delta_k, \delta_n x) = & x_1 f_1 (\delta_k \delta_n x) + ...+ (-1)^{x_k} f_k (\delta_n x)\\
& - x_{k-1} f_{k-1} (\delta_n x) -...- x_1 f_1 (\delta_n x)
\ea 
$$ 
and
$$
\ba
c(\delta_n, x) = & x_1 f_1 (\delta_n x) +...+ x_{k-1} f_{k-1} (\delta_n x) + x_k f_k (\delta_n x) + ...\\
& + (-1)^{x_n} f_n (x)  - x_{n-1} f_{n-1} (x) -...\\
& - x_{k+1} f_{k+1} (x) - x_k f_k (x)- x_{k-1} f_{k-1} (x) -...- 
x_1 f_1 (x). 
\ea
$$ 
Thus, we get
\be\label{eq_cocycle kn}
\ba
c (\delta_k, \delta_n x) + c(\delta_n, x) =  & x_1 f_1 (\delta_k \delta_n x) +... + (-1)^{x_k} f_k (\delta_n x) + x_k f_k (\delta_n x) + ...\\
& + (-1)^{x_n} f_n (x)  - x_{n-1} f_{n-1} (x) -...\\
&- x_k f_k (x) - x_{k-1} f_{k-1} (x) -...- x_1 f_1 (x). 
\ea
\ee
One can easily see (by considering all possible values for $x_k$)
 that  the following relations hold:
$$
(\delta_kx)_k f_k(\delta_n x) =  (-1)^{(\delta_n x)_k} f_k(\delta_n x)
+ x_k f_k(\delta_n x)
$$
and 
$$
- x_k f_k(x) = (\delta_kx)_k f_k( x) + (-1)^{x_k} f_k(x).
$$ 
Comparing $(\ref{eq 1.3})$ and $(\ref{eq_cocycle kn})$, we conclude
that $c(\delta_n, \delta_k x) + c(\delta_k, x)  = c(\delta_k, \delta_n x)
+c(\delta_n x)$ for all distinct  integers $ n, k$.
 
To see that, for every $n \in \N$, the cocycle $c$ has the property 
$c(\delta^2_n, x) = 0$, we observe
$$
\ba
c(\delta_n, \delta_n x) + c(\delta_n,x) = & (\delta_nx)_1 
f_1(\delta^2_n x) + ...
+ (-1)^{(\delta_n x)_n} f_n(\delta_n x) - \cdots \\
& - (\delta_nx)_{n-1} f_{n-1} (\delta_n x) -  \cdots - (\delta_nx)_{1}
 f_{1} (\delta_n x) \\
& + x_1 f_1 (\delta_n x) +...+ (-1)^{x_n} f_n (x) - ...- x_1 f_1 (x). 
\ea
$$ 
Because $\delta^2_n = \mathbbm 1$ and $f_n$ is $\delta_n$-invariant,
we see that 
$$
c(\delta^2_n,x) = (-1)^{(\delta_n x)_n} f_n (x) + (-1)^{x_n} f_n (x) = 0. 
$$ 
This proves that relation $(\ref{eq_cocycle c})$ defines a cocycle of the
group $\Gamma$. 

Conversely, if a cocycle $c$ is given, then the functions $f_n$ are
determined as follows:
set $f'_n(x) = c(\delta_n, x)$ for $x$ from the cylinder set 
$A_n(0,..., 0)$
 generated by the first $n$ zeros.  Then $f_n'$ is extended on $X$ by invariance
 with respect to the subgroup $\langle \delta_1, ... \delta_n\rangle$ to obtain 
 the function $f_n$. 
 \end{proof}

Let $\alpha$ and $\beta$ be two cocycles of $\Gamma$, which are
 determined as in Proposition \ref{prop Golodets} by 
sequences of Borel functions $f_n : X \rightarrow G$ and $\overline{f_n}
 : X \rightarrow G$, respectively. Define two new sequences of functions
$\psi_n : X \rightarrow G$ and  $\overline{\psi_n} : X \rightarrow G$
 as follows:
\begin{equation}\label{5b}
    \psi_n(x) = -x_n f_n (x) -x_{n-1} f_{n-1} (x) - ... -x_1  f_1(x) 
\end{equation}
\begin{equation}\label{5c}
    \overline{\psi_n}(x) = -x_n\overline {f_n} (x) -x_{n-1} \overline{f_{n-1}}(x) - \cdots -x_1 \overline{f_1}(x) 
\end{equation} We denote by $\{W_i\}_{i=1}^{\infty}$ a system of neighborhoods of  $0 \in G$ with the following properties: 

$(i)$ $W_i$ is compact for every $i$;

$(ii)$ $W_i$ is symmetric for every $i$ (i.e. $W_i = -W_i$);

$(iii)$ $W_{i+1} + W_{i+1} \subset W_{i}, i \in \N$. 

\begin{proposition}\label{lemma 5.1} Let $\alpha$ and $\beta$ be
two cocycles of the group $\Gamma$ with values in a l.c.s.c. group $G$.
Let $(f_n)$ and $(\overline{f_n})$ be the sequences of functions 
determined by $\alpha$ and $\beta$, respectively,
  according to Proposition \ref{prop Golodets}.
Assume  that, for all $x \in X$ and  $n \in \mathbb{N}$, 
$$f_n (x) - \overline{f_n} (x) \in W_n$$
where the neighborhoods $(W_n)$ satisfy conditions (i) - (iii). 
Then the cocycles $\alpha$ and $\beta$ are cohomologous. 

\end{proposition}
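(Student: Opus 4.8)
\emph{Proof proposal.} The plan is to produce an explicit Borel function $c:X\to G$ whose coboundary equals $\alpha-\beta$, so that $\alpha\sim\beta$. The first move is to reduce to a single cocycle: since formula \eqref{eq_cocycle c} is additive in the generating sequence, the difference $\alpha-\beta$ is exactly the cocycle furnished by Proposition \ref{prop Golodets} from the sequence $g_n:=f_n-\overline{f_n}$; each $g_n$ is again invariant under $\delta_1,\dots,\delta_n$ and, by hypothesis, satisfies $g_n(X)\subset W_n$. It therefore suffices to show that the cocycle determined by any such sequence $(g_n)$ is a coboundary, and the natural candidate for the transfer function is $c(x):=\sum_{n=1}^{\infty}x_n\,g_n(x)=\sum_{n=1}^{\infty}x_n\bigl(f_n(x)-\overline{f_n}(x)\bigr)$, i.e.\ $c=\lim_n(\overline{\psi_n}-\psi_n)$ in the notation of \eqref{5b}--\eqref{5c}.

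Next I would check that $c$ is well defined and Borel. For $m<m'$ the partial-sum difference $\sum_{k=m+1}^{m'}x_k g_k(x)$ lies in $W_{m+1}+\cdots+W_{m'}$, and a one-line induction from condition $(iii)$ shows $W_{m+1}+\cdots+W_{m'}\subset W_m$; since the $W_n$ form a neighborhood basis at $0$ and $G$, being l.c.s.c., is completely metrizable, the partial sums $\sum_{k\le m}x_k g_k(x)$ are Cauchy and converge, with the tail confined to $W_m$ (condition $(i)$ guaranteeing each $W_m$ is closed). As a pointwise limit of the manifestly Borel partial sums, $c$ is Borel.

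The main computation is then to verify $c(\delta_n x)-c(x)=(\alpha-\beta)(\delta_n,x)$ for each generator $\delta_n$. I would split the series for $c(\delta_n x)$ using $(\delta_n x)_k=x_k$ for $k\ne n$, $(\delta_n x)_n=1-x_n$, and the fact that $g_k$ is $\delta_n$-invariant whenever $k\ge n$: then the terms with $k>n$ cancel against those of $c(x)$, the $k=n$ term contributes $(1-x_n)g_n(x)-x_n g_n(x)=(-1)^{x_n}g_n(x)$ (using $1-2x_n=(-1)^{x_n}$ for $x_n\in\{0,1\}$), and the terms with $k<n$ leave $\sum_{k<n}x_k g_k(\delta_n x)-\sum_{k<n}x_k g_k(x)$ --- which is precisely the right-hand side of \eqref{eq_cocycle c} written for the sequence $(g_n)$. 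Finally, since $\Gamma$ is generated by the mutually commuting self-inverse elements $\delta_n$, every $\gamma\in\Gamma$ is a finite product of generators; both $(\gamma,x)\mapsto(\alpha-\beta)(\gamma,x)$ and $(\gamma,x)\mapsto c(\gamma x)-c(x)$ are cocycles, they agree on the $\delta_n$, hence by the cocycle identity and induction on word length they agree on all of $\Gamma$. Thus $\alpha-\beta\in B^1(\Gamma\times X,G)$, that is, $\alpha$ and $\beta$ are cohomologous.

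I expect the only genuinely analytic point --- and the place where properties $(i)$--$(iii)$ of the neighborhoods $W_n$ are used --- to be the convergence of $\sum_n x_n g_n(x)$ to a Borel function; the generator identity, while requiring careful index bookkeeping, is essentially routine once the candidate $c$ is in hand.
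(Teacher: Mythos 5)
Your proposal is correct and follows essentially the same route as the paper: your transfer function $c=\lim_n\sum_{k\le n}x_k\bigl(f_k(x)-\overline{f_k}(x)\bigr)$ is exactly the paper's $g=\lim_n(-\psi_n+\overline{\psi_n})$, the convergence is established by the same telescoping-into-$W_n$ Cauchy argument, and the coboundary identity is checked on the generators $\delta_k$ and propagated by the cocycle identity. The only difference is cosmetic: you first pass to the difference cocycle generated by $f_n-\overline{f_n}$, which streamlines the index bookkeeping that the paper carries out directly for $\alpha$ and $\beta$.
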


\begin{proof}

Define a sequence of functions $g_n (x) := -\psi_n(x) + \overline{\psi_n}(x), \, n \in \N$ where $\psi_n$ and 
$\overline{\psi_n}$ are as in \eqref{5b} and \eqref{5c}.   
Thus for all $n, k \in \mathbb{N}$, we have 
$$
g_{n+k} (x) - g_n (x) = 
 -\psi_{n+k}(x) + \overline{\psi_{n+k}}(x) +\psi_n(x) - \overline{\psi_n}(x).
$$
It follows from \eqref{5b} and \eqref{5c} that
$$
-\psi_{n+k}(x) =  x_{n+k} f_{n+k} (x) +  ... 
+ x_{n+1} f_{n+1}(x)  + \psi_n(x),
$$
and a similar formula holds for $\overline{\psi_{n+k}}(x)$. Hence, 
$$
\ba
g_{n+k} (x) - g_n (x)  = & x_{n+k} f_{n+k} (x) - x_{n+k} 
\overline{f_{n+k}} (x) +....\\
& + x_{n+1} f_{n+1}(x) - x_{n+1} \overline{f_{n+1}}(x).
\ea
$$
It follows from the condition of Proposition that   $f_{n+i} (x) - 
\overline{f_{n+i}} (x) \in W_{n+i} $ for all $i,n \in 
\mathbb{N}$. Hence we have 
$$ 
x_{n+i} f_{n+i} (x) - x_{n+i} \overline{f_{n+i}} (x)\in W_{n+i}, 
\ \ \ \forall \, i,n \in \mathbb{N}
$$ 
By the choice of $W_i$, we obtain 
\begin{align*} 
g_{n+k} (x) - g_n (x) \in &\  W_{n+k} + W_{n+k-1} + ....+ W_{n+1}. 
\\ 
 \subset &\ W_{n+k-1} + W_{n+k-1}  + ....+ W_{n+1}. \\
 \subset &\ W_{n+k-2}  + ....+ W_{n+1}. \\
& \cdots \cdots  \cdots \cdots \cdots \cdots\\
 \subset &\  W_n.
\end{align*} 
Using the Cauchy criterion, there exists a Borel function $g : X \rightarrow G$ such that, $g_n$ converges uniformly to $g$ on $X$. 

Without loss of generality, we can assume that $n\geq k$. 
Since $(\delta_n x)_i = x_i$ where $i =1 ,..., k-1,$ and 
$$-\overline{\psi_{k-1}}(\delta_k (x)) = x_1\overline{f_1}(\delta_kx)  + ...+x_{k-1} \overline{f_1}(\delta_kx)
$$ 
$$
\overline{\psi_{k-1}} (x) = -x_{k-1} \overline{f_{k-1}}(x)+ ...+ -x_1 \overline{f_1}(x),$$
we can  compute $\widehat{\beta}(\delta_k, x) = g_n(\delta_k x) + \beta (\delta_k, x) - g_n (x)$ as follows:
$$
\ba
\widehat{\beta}(\delta_k, x) & =   -\psi_n(\delta_k x) + \overline{\psi_n}(\delta_k x) + x_1\overline{f_1}(\delta_kx)  + ...+x_{k-1} \overline{f_1}(\delta_kx)
+ (-1)^{x_k} \overline{f_k}(x) \\
& \ \ -x_{k-1} \overline{f_{k-1}}(x) - ... - x_1 \overline{f_1}(x) - 
( -\psi_n(x) + \overline{\psi_n}(x))\\
& = -\psi_n(\delta_k x) + \overline{\psi_n}(\delta_k x)  -\overline{\psi_{k-1}}(\delta_k (x)) + (-1)^{x_k} \overline{f_k}(x)
+  \overline{\psi_{k-1}} (x)\\
&\ \ +\psi_n(x) - \overline{\psi_n}(x)\\
& =  -\psi_n(\delta_k x) -x_n \overline{f_n} (\delta_k x) - x_{n-1} \overline{f_{n-1}} (\delta_k x) - ... - (\delta_k x)_k \overline{f_k} (\delta_k x) \\
& \ \ + (-1)^{x_k} \overline{f_k}(x) 
+ x_n \overline{f_n} (x)+...+ x_k \overline{f_k} (x) + \psi_n (x).
\ea
$$
Since $n \geq k$, the function $f_n$ is invariant with respect to 
$\delta_1, ...,\delta_k$, we have
$$
\ba
\widehat{\beta}(\delta_k, x) & = -\psi_n(\delta_k x) -x_n \overline{f_n} 
(x) - x_{n-1} \overline{f_{n-1}} (x) - ... - (\delta_k x)_k 
\overline{f_k} (x) + (-1)^{x_k} \overline{f_k}(x)\\
& \ \ +  x_k \overline{f_k} (x)+...+ x_n \overline{f_n} (x) + \psi_n (x). 
\ea
$$
 After simplifying, we obtain that
$$
\widehat{\beta}(\delta_k, x) = -\psi_n(\delta_k x) - (\delta_k x)_k 
\overline{f_k} (x) + (-1)^{x_k} \overline{f_k} (x) + x_k 
\overline{f_k} (x) + \psi_n (x). 
$$
It remains to show that 
\be\label{eq identity}
(\delta_k x)_k \overline{f_k} (x) + (-1)^{x_k} \overline{f_k} (x) + 
x_k \overline{f_k} (x) = 0.
\ee
Indeed, if $x_k = 0$, then $x_k \overline{f}_k (x)=0$, and 
$(\delta_k x)_k =1 $ implies that 
$- (\delta_k x)_k \overline{f_k} (x) = - \overline{f_k} (x) $. If $x_k =1$, then  then $- (\delta_k x)_k \overline{f_k} (x) = 0$ and 
$(-1)^{x_k} \overline{f}_k (x) = - \overline{f}_k (x)$ .  Thus in both cases we get 
$$ 
g_n(\delta_k x) + \beta (\delta_k, x) - g_n (x) = -\psi_n(\delta_k x) + \psi_n (x). $$
On the other hand, 
$$
\ba 
-\psi_n(\delta_k x) + \psi_n (x) & = x_n f_n (\delta_k x)  +..
 + (\delta_k x_k) f_k (\delta_k x) + ... \\ 
 & \ \ + x_1 f_1 (\delta_k x) -x_n f_n (x) -  ...- x_k f_k (x) -...- x_1 
 f_1 (x). 
\ea 
$$
By invariance of $f_n$ with respect to of $\delta_1, ...,\delta_k$, we  
can write down the above equality as
\be\label{eq for alpha}
\ba 
-\psi_n(\delta_k x) + \psi_n (x) & =  x_n f_n (x)  +... + (\delta_k x)_k
 f_k (x) + x_{k-1} f_{k-1} (\delta_k x) + ... \\
 & \ \ + x_1 f_1 (\delta_k x)  -x_n f_n (x) -  ...- x_k f_k (x) - ...- 
 x_1 f_1 (x)\\
& = - \psi_{k-1}(\delta_k x) + (-1)^{x_k} f_k (x) + \psi_{k-1} (x)\\
&  = \alpha (\delta_k, x).
\ea
\ee
The first equality in \eqref{eq for alpha} is due to relation 
\eqref{eq identity}, applied to the function $f_k$, and the second 
equality is, in fact, a short form of the definition of $\alpha$.

Thus,  we proved that, for every $n \geq k$ and all $x\in X$,
$$
g_n(\delta_k x) + \beta (\delta_k, x) - g_n (x) = \alpha (\delta_k, x) .
$$ 
Since $g_n(x) \to g(x)$ as $n \rightarrow \infty$,  we conclude that
$$
g(\delta_k x) + \beta (\delta_k, x) - g (x) = \alpha (\delta_k, x).
$$ 
Because the group $\Gamma$ is generated by $\delta_k, k \in \N,$,
we see that  the cocycles $\alpha$ and $\beta$ are cohomologous. 
\end{proof} 

\begin{theorem}\label{thm 5.1} Let $\Gamma$ be a free group of Borel automorphisms which is orbit equivalent to the 2-odometer. Let $\alpha$ be a $\Gamma$-cocycle with values in a l.c.s.c. group $G$ and $H$ a 
dense countable subgroup of $G$.  Then  the cocycle $\alpha:\Gamma \times X \rightarrow G$ is  cohomologous to a  cocycle $\beta $ with values the subgroup $H$.

\end{theorem}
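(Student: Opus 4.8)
The plan is to pass to the concrete odometer group, represent $\alpha$ via the data $(f_n)$ furnished by Proposition~\ref{prop Golodets}, and then replace each $f_n$ by an $H$-valued Borel function $\overline{f_n}$ that agrees with $f_n$ up to the neighbourhood $W_n$, so that Proposition~\ref{lemma 5.1} yields a cohomologous $H$-valued cocycle.

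First I would reduce to the case $\Gamma=\Gamma_0:=\langle\delta_1,\delta_2,\dots\rangle$ acting on $X=\{0,1\}^{\mathbb N}$ as in \eqref{odometer}. Since $\Gamma$ is orbit equivalent to $\Gamma_0$, Proposition~\ref{top group isomorphism} supplies a group isomorphism between the two cocycle groups that carries coboundaries to coboundaries; it therefore preserves both the cohomology relation and the property of taking values in the subgroup $H$. Hence it suffices to find, for the $\Gamma_0$-cocycle corresponding to $\alpha$, a cohomologous cocycle with values in $H$, and then transport it back to obtain $\beta$. From now on I assume $\Gamma=\Gamma_0$.

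By Proposition~\ref{prop Golodets} there is a sequence of Borel functions $f_n:X\to G$, with each $f_n$ invariant under $\delta_1,\dots,\delta_n$, so that $\alpha$ is given by \eqref{eq_cocycle c}. I would fix a system $(W_n)$ of compact symmetric neighbourhoods of $0\in G$ with $W_{n+1}+W_{n+1}\subseteq W_n$, and then, for each $n$, build a Borel retraction of $G$ onto $H$ inside $W_n$: enumerating $H=\{h_k:k\in\mathbb N\}$ and writing $V_n=\mathrm{int}(W_n)$, the open sets $h_k+V_n$ cover $G$ because $H$ is dense, so
\[
\varphi_n(g):=h_k,\qquad k=\min\{\,j:\ g\in h_j+V_n\,\},
\]
is a Borel map $\varphi_n:G\to H$ with $g-\varphi_n(g)\in V_n\subseteq W_n$ for all $g$. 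Setting $\overline{f_n}:=\varphi_n\circ f_n$, each $\overline{f_n}$ is Borel, $H$-valued, satisfies $f_n(x)-\overline{f_n}(x)\in W_n$ for every $x$, and --- this is the point --- remains invariant under $\delta_1,\dots,\delta_n$, since $\overline{f_n}(\delta_i x)=\varphi_n(f_n(\delta_i x))=\varphi_n(f_n(x))=\overline{f_n}(x)$ for $i\le n$. By the converse half of Proposition~\ref{prop Golodets}, $(\overline{f_n})$ then defines a cocycle $\beta$ of $\Gamma$ through \eqref{eq_cocycle c}.

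Finally I would check that $\beta$ does the job. Formula \eqref{eq_cocycle c} expresses $\beta(\delta_n,x)$ as a finite integer combination of values of the $\overline{f_i}$, so $\beta(\delta_n,x)\in H$; since $\Gamma$ is generated by the $\delta_n$, the cocycle identity \eqref{eq cocyc 1} forces $\beta(\gamma,x)\in H$ for all $\gamma\in\Gamma$ and $x\in X$. And the pointwise containment $f_n-\overline{f_n}\in W_n$ is precisely the hypothesis of Proposition~\ref{lemma 5.1}, which gives $\alpha\sim\beta$. The only step that is not routine bookkeeping is the construction of the Borel retraction $\varphi_n:G\to H$ together with the observation that composing with it automatically preserves the $\langle\delta_1,\dots,\delta_n\rangle$-invariance required by Proposition~\ref{prop Golodets}; with that in hand the theorem follows by citing the two preceding propositions.
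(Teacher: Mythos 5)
Your proposal is correct and follows essentially the same route as the paper: reduce to the concrete odometer group, take the representing functions $f_n$ from Proposition~\ref{prop Golodets}, approximate each by an $H$-valued function within $W_n$ while keeping the $\langle\delta_1,\dots,\delta_n\rangle$-invariance, and invoke Proposition~\ref{lemma 5.1}. The only difference is that you make explicit the Borel retraction $\varphi_n:G\to H$ behind the approximation step, which the paper leaves as ``clearly it can be done''; that is a welcome clarification rather than a deviation.
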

\begin{proof} Without loss of generality, we can consider cocycles
of the 2-odometer. By Proposition \ref{prop Golodets} 
the cocycle $\alpha$ is determined by the functions $f_n : X \rightarrow G, n \in \N$. Take a sequence of symmetric neighborhoods of 
0 in $G$ which satisfies the properties (i) - (iii) (see above).
 Approximate each function $f_n (x)$ by a function 
$\overline{f_n} (x)$ with values in $H$ so that 
$f_n (x) - \overline{f_n} (x) \in W_n $ for each $x \in X$, and 
additionally, 
$\overline{f_n} (\delta_j x) = \overline{f_n} (x)$, for $1 \leq j \leq n$. 
Clearly it can be done because the functions $f_n$ have this property.

Hence, we satisfy the conditions of Proposition $\ref{lemma 5.1}$. 
Construct the $\Gamma$-cocycle $\beta$  which is determined
 by the sequence of functions $\overline{f_n} (x)$, then $\beta$ is 
 cohomologous to  $\alpha$. 
\end{proof}

\section{Borel version of Gottschalk-Hedlund theorem}\label{sec 6}
The following is a version of the Gottschalk-Hedlund (G-H) theorem for Borel
automorphisms. Our proof is a modification of the proof of 
Gottschalk-Hedlund theorem given by F. Browder \cite{Browder_1958}. 

We will consider homeomorphisms of a Polish space. It is well known 
that every Borel automorphism admits a continuous model, i.e., it is
Borel isomorphic to a homeomorphism of a Polish space, see
e.g. \cite{Kechris_1995}.  
We say that a homeomorphism $T \in Aut (X,\mathcal{B})$ acting on 
a Polish space $X$  is \textit{minimal} if every $T$-orbit is dense in $X$, 
i.e., for every $x \in X$, $\overline{\{T^i x : i \in \Z \}} = X$. 
 There exist Polish spaces that  admit minimal 
homeomorphisms (we thank \cite{Snoha_2019} for examoples of such spaces). 

We note that in Theorem $\ref{thm GH}$ we consider \textit{bounded} cocycles of homeomorphism of a Polish space, while G-H theorem for topological dynamics (see \cite{Gottschalk_Hedlund_1955}) has no such restriction. This is due to the fact that the underlying space in Theorem 
$\ref{thm GH}$ is a non-compact Polish space. In topological dynamics \textit{continuous} cocycles of homeomorphism of a compact space are studied. Here we study \textit{Borel} cocycles of homeomorphisms of a non-compact Polish space. Hence, we have to limit our discussion to bounded cocycles. We do not know whether the result holds without 
this assumption. 

In the proof of Theorem \ref{thm GH} we will use the following fact: Every locally compact second countable group $G$ has a left-invariant metric $d$ which is proper, that is every closed $d$-bounded set in $G$ is compact (see \cite[Theorem 2.B.4]{Cornulier_Harpe_2016}).

\begin{theorem}\label{thm GH} Let $(X,\mathcal{B})$ be a Polish 
space and $T \in Aut(X,\B)$ is a minimal homeomorphism of 
$(X,\mathcal{B})$. 
Let $h : X \rightarrow G$ be a bounded Borel map from $X$ to a l.c.s.c.
 abelian group $G$. Then, the function $h$ is a 
coboundary (i.e., there exists a bounded Borel 
function $f : X \rightarrow G$  such that $f(T x)- f(x)=h(x)$, 
$x \in X$), if and only if there exists $M>0$ such that
\begin{equation*}
    \underset{x \in X}{\mathrm{Sup}}\,\, \left|\underset{k=-j}{\overset{j}{\sum}} h(T^k x)\right| \leq M,
\end{equation*} for all $j \geq 0$. 
\end{theorem}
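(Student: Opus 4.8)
The plan is to follow the classical Gottschalk--Hedlund argument in the form due to Browder, adapted to the Borel setting with the properness of an invariant metric on $G$ playing the role of compactness. The ``only if'' direction is trivial: if $h(x) = f(Tx) - f(x)$ with $f$ bounded, then the telescoping sum $\sum_{k=-j}^{j} h(T^k x) = f(T^{j+1}x) - f(T^{-j}x)$ is bounded by $2\sup_x |f(x)| =: M$ uniformly in $j$ and $x$. So the content is in the ``if'' direction.

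\textbf{Setup and the orbit construction.} Fix a proper left-invariant metric $d$ on $G$ (as quoted from \cite[Theorem 2.B.4]{Cornulier_Harpe_2016}), and write $|g| = d(g,0)$. Define the partial sums along the forward orbit: for $x \in X$, set $S_0(x) = 0$ and $S_n(x) = \sum_{k=0}^{n-1} h(T^k x)$ for $n \geq 1$, and $S_{-n}(x) = -\sum_{k=1}^{n} h(T^{-k}x)$, so that $S_n$ satisfies the additive cocycle identity $S_{m+n}(x) = S_m(T^n x) + S_n(x)$ and $S_n(x) = \sum_{k} h(T^k x)$ over the appropriate range. The hypothesis $\sup_x |\sum_{k=-j}^{j} h(T^k x)| \le M$ translates, via the cocycle identity, into a uniform bound $|S_n(x)| \le M'$ for all $n \in \Z$ and all $x \in X$, for some constant $M'$ (roughly $M' = 2M$, since $S_n(x) - S_{-m}(x)$ is a symmetric-window sum at a shifted point). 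Thus the set $\{S_n(x) : n \in \Z,\ x\in X\}$ is $d$-bounded, hence contained in a compact subset $K$ of $G$ by properness. This is the step that replaces compactness of the phase space in the topological version, and it is where boundedness of $h$ (ensuring each $S_n$ is a well-defined Borel function into $G$) and the hypothesis are both used.

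\textbf{Constructing the transfer function $f$.} Fix a basepoint $x_0 \in X$ whose orbit is dense (every orbit works since $T$ is minimal). On the orbit of $x_0$ define $f(T^n x_0) = S_n(x_0) + c$ for a suitable constant $c \in G$ to be chosen (or simply $c = 0$); then by construction $f(T\cdot T^n x_0) - f(T^n x_0) = S_{n+1}(x_0) - S_n(x_0) = h(T^n x_0)$, so the coboundary relation holds along the dense orbit. The task is to extend $f$ to all of $X$ as a Borel (indeed the argument gives continuity-like control) function still satisfying $f(Tx) - f(x) = h(x)$. The Browder argument does this by showing that $n \mapsto S_n(x_0)$ is, in an appropriate sense, uniformly continuous relative to the orbit topology: one shows that there is a single value to which $f$ must extend at each point. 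Concretely, for $y \in X$ pick a sequence $T^{n_i} x_0 \to y$; one argues that $S_{n_i}(x_0)$ converges, and that the limit is independent of the chosen sequence, so $f(y) := \lim_i S_{n_i}(x_0) + c$ is well-defined; the limit exists because $\{S_n(x_0)\}$ lies in the compact set $K$ and because closeness of $T^{n_i}x_0$ and $T^{n_j}x_0$ forces $S_{n_i}(x_0)$ and $S_{n_j}(x_0)$ to be close — the latter being the crucial equicontinuity-type estimate, which in the classical proof follows from a clever use of the cocycle identity and the recurrence/minimality, choosing $m = n_j - n_i$ and estimating $S_m$ at a point near $x_0$. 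Boundedness of $h$ again enters to control $S_m$ on small windows. Once $f$ is defined pointwise this way, the relation $f(Ty) - f(y) = h(y)$ passes to the limit from the orbit of $x_0$ by continuity of $h$ (here using the continuous model, so $h$ may be taken continuous, or one works directly with a Borel $h$ and argues measurably). Finally $f$ is bounded because its values lie in $K + c$, which is $d$-bounded; and $f$ is Borel since it is a pointwise limit along a Borel selection of approximating orbit points (using a standard Borel selection theorem to pick, measurably in $y$, a sequence $n_i(y)$ with $T^{n_i(y)}x_0 \to y$).

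\textbf{Main obstacle.} The genuinely delicate point is the equicontinuity estimate: showing that if $T^{m}x$ is close to $x$ (which, by minimality and recurrence, happens for infinitely many $m$ and for a dense set of return-time data), then $|S_m(x)|$ — or more precisely the oscillation of $S_n(x_0)$ between two nearby orbit points — is small. In the compact continuous setting this is handled by a contradiction/compactness argument (Gottschalk--Hedlund) or by Browder's direct estimate; here, with a non-compact Polish space and merely Borel $h$, one has to be careful that the relevant sums stay controlled, which is exactly why the theorem is restricted to bounded cocycles and why the proper metric on $G$ is invoked. I expect the cleanest route is: (1) deduce the uniform bound $|S_n(x)| \le M'$ and hence relative compactness of all partial sums; (2) run Browder's argument verbatim to get $f$ on a dense orbit and extend by the uniform-continuity estimate, noting that his proof only uses minimality and the uniform bound, not compactness of the space; (3) check Borel measurability of the resulting $f$ via a measurable selection of approximating times. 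Steps (1) and (3) are routine; step (2)'s equicontinuity lemma is the heart of the matter and the place where the non-compactness of $X$ must be navigated with care.
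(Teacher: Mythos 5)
The ``only if'' direction and your step (1) --- passing from the symmetric-window bound to a uniform bound on all partial sums $S_n(x)$ and hence, via the proper invariant metric on $G$, to relative compactness of the set of partial sums --- match what the paper does. But the core of your ``if'' direction is a genuinely different construction from the paper's, and it contains a real gap. You define $f$ on the dense orbit of $x_0$ by $f(T^n x_0) = S_n(x_0)$ and then extend to an arbitrary $y$ by $f(y) = \lim_i S_{n_i}(x_0)$ along a sequence with $T^{n_i}x_0 \to y$; the existence of that limit and its independence of the approximating sequence is exactly the ``equicontinuity-type estimate'' that you explicitly defer as ``the heart of the matter'' and never prove. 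For a merely Borel $h$ --- which is the hypothesis of the theorem --- that estimate is false in general: a Borel function can oscillate arbitrarily on every neighborhood, so proximity of $T^{n_i}x_0$ and $T^{n_j}x_0$ gives no control on $S_{n_i}(x_0) - S_{n_j}(x_0)$. Relative compactness of $\{S_n(x_0)\}$ only yields convergent subsequences, not a single well-defined limit, so your transfer function $f$ is not well defined and the argument does not close. (Your fallback of ``taking $h$ continuous via the continuous model'' changes the problem: the continuous model realizes $T$, not a prescribed pair $(T,h)$ with $h$ continuous.)

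The paper (following Browder, whose proof is the minimal-set argument, not a direct estimate) sidesteps this entirely. It forms the skew product $\pi(x,g) = (Tx,\, g+h(x))$ on $X \times G$, uses Zorn's lemma to extract a minimal nonempty closed $\pi$-invariant set $F_0$ whose $G$-projection lies in a compact set, shows $F_0$ projects onto all of $X$ (density of the $T$-orbit plus closedness of the projection along a compact fiber), and shows $F_0$ meets each fiber $\{x_0\}\times G$ in at most one point: two points in one fiber would make $F_0$ invariant under the translation $g \mapsto g + k$ with $k = g_1 - g_0 \neq 0$, and iterating this translation contradicts the boundedness of $p_G(F_0)$. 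Hence $F_0$ is the graph of a function $f$, automatically continuous (closed graph into a compact set) and satisfying $f(Tx) = f(x) + h(x)$ by $\pi$-invariance. The well-definedness you are missing is precisely what this translation-invariance contradiction delivers; to repair your route you would essentially have to reprove that lemma, at which point you are running the paper's proof.
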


Before we begin to prove Theorem $\ref{thm GH}$, we define
some maps and prove Lemmas \ref{lemma 6A} - \ref{lemma 6C}. Let $\psi : X \times G \rightarrow G$, as $\psi (x,g) = g + h(x)$ where $h(x)$ is the Borel map as in the statement of 
Theorem \ref{thm GH}. Next, we define the skew product $X \times G \rightarrow X\times G$ as $\pi (x, g) = (Tx,\psi (x,g)) = (T x, g+ h(x))$.

Denote by $Orb_\pi(x,g) = \bigcup _{n \in \mathbb{Z}} 
\{\pi^n (x,g)\}$ the orbit of $(x,g)$ under $\pi$ and by $F(x,g)=$ $ 
\overline{Orb_\pi(x,g)}$ the orbit closure in $X \times G$. Let $p_X$
and $p_G$ denote the natural projections from $X \times G$ to $X$ and 
$G$, respectively. We assume that for each point $(x,g) \in X \times G$ the set $p_G (F(x,g))$ is contained in a compact subset of $G$. 

\begin{remark} \label{bounded assumption} We note that  the condition
that  $\underset{k=-j}{\overset{j}{\sum}} h(T^k x)$ is bounded in 
$G$ for all $x \in X$ and $j \geq 0$ is equivalent to the fact that the 
orbit (with respect to $\pi$) of any point $(x,g) \in X \times G$ has a
bounded and hence a precompact image in $G$ under the projection 
map $p_{G}$ of $X \times G$ into $G$. This in turn implies that 
$p_G (F(x,g))$ is contained in a compact subset of $G$. 

\end{remark}

Consider the family $J$ of subsets $F$ of $X \times G$ such that
\begin{multline*}
J = \{F \ |\ F\ \mathrm{is\,\,nonempty\,\,closed \,\, subset \,\, of}\,\, 
X \times G \,\,; (x,g) \in F \,\, \mathrm{implies \,\, that}\,\,\\ \pi(x,g) 
\in F ;\, p_G(F)\,\, \mathrm{is\,\, contained \,\, in\,\,a\ compact \,
\,subset\,\, of} \,\, G\}.
\end{multline*} 
Obviously, $J$ is nonempty since, for any point $(x_0, g_0) \in X 
\times G$ the set $F(x_0, g_0)$ is  in $J$.

\begin{lemma}\label{lemma 6A}If $F \in J$, then $p_X(F) = X$.

\end{lemma}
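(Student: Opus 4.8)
We want to show that any $F\in J$ projects onto all of $X$. The key point is that $p_X(F)$ is a $T$-invariant subset of $X$ whose closure, by minimality of $T$, must be all of $X$ — so it suffices to show $p_X(F)$ is both $T$-invariant and closed, after which minimality finishes the argument.

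First I would check $T$-invariance. If $x\in p_X(F)$, pick $g$ with $(x,g)\in F$; since $F$ is $\pi$-invariant, $\pi(x,g)=(Tx, g+h(x))\in F$, so $Tx\in p_X(F)$. For the reverse inclusion, note $F$ is also invariant under $\pi^{-1}$ (being closed and $\pi$-invariant with precompact $G$-projection, the orbit of any point stays in $F$ in both time directions; more directly, if $(x,g)\in F$ then $\pi^{-1}(x,g)=(T^{-1}x, g-h(T^{-1}x))$ lies in $\overline{Orb_\pi(x,g)}\subset F$). Hence $T^{-1}x\in p_X(F)$ as well, and $p_X(F)$ is $T$-invariant in the strong (two-sided) sense, i.e. $T(p_X(F))=p_X(F)$.

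The main obstacle is showing $p_X(F)$ is closed, since projections of closed sets need not be closed in general. This is exactly where the compactness hypothesis on $p_G(F)$ is used: let $K\subset G$ be a compact set containing $p_G(F)$, so $F\subset X\times K$. The projection $X\times K\to X$ is a closed map because $K$ is compact (the projection along a compact factor is always closed — this is the tube lemma). Therefore $p_X(F)$, being the image of the closed set $F$ under this closed map, is closed in $X$.

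Finally, $p_X(F)$ is a nonempty ($F\neq\emptyset$), closed, $T$-invariant subset of $X$. Pick any $x\in p_X(F)$; by minimality of $T$ the orbit $\{T^i x : i\in\Z\}$ is dense in $X$, and this orbit lies in the $T$-invariant set $p_X(F)$, so $X=\overline{\{T^i x : i\in\Z\}}\subset \overline{p_X(F)}=p_X(F)$. Hence $p_X(F)=X$, as claimed.
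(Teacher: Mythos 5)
Your proof follows the same route as the paper's: the $T$-orbit of a point of $p_X(F)$ lies in $p_X(F)$ and is dense by minimality, while $p_X(F)$ is closed because $F\subset X\times\overline{p_G(F)}$ and projection along a compact factor is a closed map; dense plus closed gives $p_X(F)=X$. The only substantive difference is that you attempt to justify the backward $\pi$-invariance of $F$, a fact the paper uses silently (its proof invokes $\pi^n(x_0,g_0)\in F$ for all $n\in\Z$ even though the definition of $J$ only requires $\pi(x,g)\in F$).

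That added step is the one place your argument does not hold up as written. The claim that $\pi^{-1}(x,g)\in\overline{Orb_\pi(x,g)}\subset F$ is circular: $Orb_\pi(x,g)$ is by definition the two-sided orbit, and the inclusion of its backward half in $F$ is precisely what you are trying to establish. Likewise, ``closed, forward $\pi$-invariant, with precompact $G$-projection'' does not by itself force backward invariance (take $X=\Z$ with $Tx=x+1$, $G$ trivial, $F=\{(n,0):n\ge 0\}$: all hypotheses hold, $T$ is even minimal in the paper's sense, yet $p_X(F)\ne X$). Some form of two-sided invariance is genuinely needed here, since on a non-compact Polish space density of two-sided orbits does not imply density of forward orbits, so one cannot retreat to the forward orbit alone. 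This lacuna is shared with the paper's own proof rather than introduced by you; the intended reading is that the relevant elements of $J$ --- in particular the orbit closures $F(x,g)$, which contain the full two-sided orbit by construction --- are invariant in both directions, and with that understanding the rest of your argument is correct.
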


\begin{proof} Let $(x_0,g_0) \in F$. Since $\pi^n (x_0,g_0) 
\in F$, $p_X (\pi^n (x_0,g_0)) \in p_X(F)$. Thus $p_X(F)$ contains the 
dense set $\{T^k (x_0)\}$. Hence  $p_X(F)$ is dense in $X$.

Next, for $F \subset X\times G$, we have $F \subset X \times 
\overline{p_G(F)}$ and $\overline{p_G(F)}$ is a compact set in $G$. 
Since the projection $p_X(F)$ is a closed map, we obtain that  
$ p_X(F)$ is closed in $X$. We showed that  $p_X(F)$ is dense and 
 closed in $X$, hence $p_X(F)= X$. 
\end{proof}

\begin{lemma}\label{lemma 6B} The family of sets $J$ has a minimal
 element under inclusion. Every orbit closure $F(x,g)$,  $(x, g) \in X 
 \times G$,  contains a minimal element of $J$. 

\end{lemma}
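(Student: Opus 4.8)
The plan is to apply Zorn's lemma to $J$ ordered by reverse inclusion, but it is cleanest to first localize the argument inside a single orbit closure. I would fix a point $(x_0, g_0) \in X \times G$ and set $K_0 = \overline{p_G(F(x_0, g_0))}$, which is compact by the standing assumption (see Remark \ref{bounded assumption}). Put $J_0 = \{ F \in J : F \subseteq F(x_0, g_0) \}$. Then $J_0$ is nonempty since $F(x_0,g_0) \in J_0$, and, crucially, every $F \in J_0$ satisfies $p_G(F) \subseteq K_0$, so all members of $J_0$ lie inside the fixed ``slab'' $X \times K_0$. Proving that $J_0$ has a minimal element (under inclusion) will give everything: the first assertion follows since such a minimal element of $J_0$ is automatically minimal in $J$, and the second assertion follows since $(x_0,g_0)$ is arbitrary.

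The heart of the argument is to verify that every nonempty chain $\mathcal{C} = \{F_\alpha\}_{\alpha \in A}$ in $(J_0, \supseteq)$ has an upper bound, namely $F := \bigcap_{\alpha} F_\alpha$. That $F$ is closed, that $p_G(F) \subseteq K_0$ is compact, and that $F$ is forward $\pi$-invariant (if $(x,g)\in F$ then $(x,g)\in F_\alpha$ for all $\alpha$, so $\pi(x,g)\in F_\alpha$ for all $\alpha$, so $\pi(x,g)\in F$) are all immediate. The one nontrivial point — and the step I expect to be the main obstacle — is showing $F \neq \emptyset$, since $X$ itself is not compact and so we cannot simply invoke compactness of the ambient slab. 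Here I would fix an arbitrary point $x^* \in X$ and pass to the fibers $\Phi_\alpha := F_\alpha \cap (\{x^*\} \times G)$. By Lemma \ref{lemma 6A}, $p_X(F_\alpha) = X$, so each $\Phi_\alpha$ is nonempty; and $\Phi_\alpha$ is a closed subset of the compact set $\{x^*\} \times K_0$, hence compact. Since $\mathcal{C}$ is totally ordered, so is $\{\Phi_\alpha\}$, and this family of nonempty compact sets therefore has the finite intersection property; compactness then yields $\bigcap_\alpha \Phi_\alpha \neq \emptyset$, and since $\bigcap_\alpha \Phi_\alpha = F \cap (\{x^*\}\times G) \subseteq F$ we conclude $F \neq \emptyset$. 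Thus $F \in J_0$ is a lower bound of $\mathcal{C}$, and Zorn's lemma produces a minimal element $F_{\min}$ of $J_0$ under inclusion.

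It then remains to upgrade $F_{\min}$ from ``minimal in $J_0$'' to ``minimal in $J$'': if $F' \in J$ satisfies $F' \subseteq F_{\min}$, then $F' \subseteq F_{\min} \subseteq F(x_0, g_0)$, so $F' \in J_0$, and minimality of $F_{\min}$ in $J_0$ forces $F' = F_{\min}$. This proves the first claim. For the second claim, observe that $(x_0, g_0)$ was arbitrary, so the construction above furnishes, for every orbit closure $F(x_0, g_0)$, a minimal element of $J$ that is contained in it. The remaining verifications — the elementary closure/invariance properties of $F$ recorded above, and the standard bookkeeping for Zorn's lemma — are routine.
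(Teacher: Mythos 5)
Your proof is correct and follows essentially the same route as the paper: Zorn's lemma applied to chains ordered by inclusion, with nonemptiness of the chain intersection obtained from Lemma \ref{lemma 6A} by passing to the fibers over a fixed point, which are nonempty closed subsets of a compact slice and hence have the finite intersection property. Your localization to $J_0 = \{F \in J : F \subseteq F(x_0,g_0)\}$ is a minor refinement that makes the second assertion (every orbit closure contains a minimal element) explicit, a point the paper's proof leaves implicit.
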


\begin{proof}
We use Zorn's lemma. Consider  a totally ordered (with respect to 
 inclusion) chain $\{F_\alpha\}$ in $J$. Let $F_0 = 
 \bigcap_\alpha F_\alpha $. Then $F_0$ is a closed $\pi$-invariant set
 and $p_G(F_0)$ is clearly contained in a compact set of $G$. To prove 
 that $F_0 \in J$, we show $F_0  \neq \emptyset$.  

Let $x_0 \in X$, consider $G_\alpha = F_\alpha \cap p_X^{-1} (x_0)$. 
By Lemma \ref{lemma 6A},  $p_X(F_\alpha) = X$,  therefore 
$G_{\alpha}$ is a nonempty closed subset  for any $x_0\in X$ and 
any $\alpha$.  
Moreover ,$G_\alpha \subset x_0 \times \overline{p_G (F_\alpha)}$. 
We note that $ x_0 \times \overline{p_G (F_\alpha)}$ is compact since 
it is mapped homeomorphically by $p_G$ to a compact set 
$ \overline{p_G (F_\alpha)}$. Since $G_\alpha$ is  compact for each $\alpha$, $G_0 = \bigcap_\alpha G_\alpha $ is non-empty. 
Since $G_0 \subset F_0$ we conclude that  $F_0$ is non-empty. 
\end{proof} 

Let $\xi : G \rightarrow G$ be a homeomorphism of $G$ such that it commutes with $\psi$ i.e. $\psi(x, \xi g) = \xi \psi (x,g)$ for all $x \in X$ and $g \in G$. Let  $S_{\xi} : X \times G \rightarrow X \times 
G $ be a homeomorphism defined by $S_{\xi} (x,g)= (x, \xi g) = (x, \xi g)$.  

\begin{lemma}\label{lemma 6C} Let $F_0$ be a minimal element of 
$J$ and suppose that for a fixed point $x_0 \in X$, the points 
$(x_0, g_0)$, $(x_0, g_1)$ lie in $F_0$. Suppose further that there exists a homeomorphism $\xi$ of $G$ onto itself such that it commutes with $\psi$ and $\xi (g_0) = g_1 $. Then $S_{\xi_k} F_0 = F_0$.
\end{lemma}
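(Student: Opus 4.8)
The plan is to exploit that $S_\xi$ commutes with the skew product $\pi$, so that $S_\xi$ permutes the family $J$, preserving minimality, and then to observe that $F_0$ and $S_\xi F_0$ share the point $(x_0,g_1)$, whence minimality forces them to coincide.

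First I would record the commutation relation $S_\xi\circ\pi=\pi\circ S_\xi$. Writing $S_\xi=\mathrm{id}\times\xi$, we have
$$
\pi(S_\xi(x,g))=\pi(x,\xi g)=(Tx,\psi(x,\xi g))=(Tx,\xi\psi(x,g))=S_\xi(Tx,\psi(x,g))=S_\xi(\pi(x,g)),
$$
where the third equality is exactly the hypothesis that $\xi$ commutes with $\psi$. Since $\xi$ is a homeomorphism of $G$, $S_\xi$ is a homeomorphism of $X\times G$ with inverse $S_{\xi^{-1}}$, and $\xi^{-1}$ also commutes with $\psi$.

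Next I would check that $S_\xi$ maps $J$ bijectively onto itself. If $F\in J$, then $S_\xi F$ is closed (the image of a closed set under a homeomorphism), it is $\pi$-invariant because $\pi(S_\xi F)=S_\xi(\pi F)\subseteq S_\xi F$, and $p_G(S_\xi F)=\xi(p_G(F))$ is contained in $\xi(\overline{p_G(F)})$, which is a compact subset of $G$; hence $S_\xi F\in J$. Applying the same to $S_\xi^{-1}=S_{\xi^{-1}}$ shows $S_\xi$ is a bijection of $J$ onto $J$, and it obviously preserves inclusion, so it sends minimal elements of $J$ to minimal elements of $J$. In particular $S_\xi F_0$ is a minimal element of $J$.

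Finally I would produce a common point: by hypothesis $(x_0,g_1)\in F_0$, and since $(x_0,g_0)\in F_0$ with $\xi(g_0)=g_1$ we also have $(x_0,g_1)=S_\xi(x_0,g_0)\in S_\xi F_0$, so $F_0\cap S_\xi F_0\neq\emptyset$. This intersection is closed, $\pi$-invariant (an intersection of $\pi$-invariant sets), and has $p_G$-image contained in the compact set $\overline{p_G(F_0)}$, hence belongs to $J$; being contained in the minimal elements $F_0$ and $S_\xi F_0$, minimality forces $F_0\cap S_\xi F_0=F_0=S_\xi F_0$, which is the claimed equality (and then $S_{\xi^k}F_0=F_0$ for every $k$ by iteration). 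I expect the only points needing care to be the verification that $S_\xi$ preserves membership in $J$ — in particular that precompactness of the $p_G$-image is preserved — and making sure the commutation hypothesis on $\xi$ and $\psi$ is invoked exactly where the relation $S_\xi\pi=\pi S_\xi$ is used; the rest is the standard minimality argument as in Browder.
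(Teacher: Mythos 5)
Your proof is correct, and the key ingredients are the same as in the paper: the commutation relation $S_\xi\pi=\pi S_\xi$ followed by an appeal to minimality of $F_0$ in $J$. The two arguments differ in how minimality is deployed at the end. The paper first identifies $F_0$ with the orbit closures, $F_0=F(x_0,g_0)=F(x_0,g_1)$ (both are elements of $J$ sitting inside the minimal $F_0$), and then computes $S_\xi F_0=S_\xi F(x_0,g_0)=F(x_0,\xi g_0)=F(x_0,g_1)=F_0$. You instead show that $S_\xi$ is an inclusion-preserving bijection of $J$ onto itself (so $S_\xi F_0$ is again minimal) and conclude by observing that two minimal elements of $J$ sharing the point $(x_0,g_1)$ must coincide, since their intersection is a nonempty member of $J$ contained in both. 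Your route is marginally more robust: it never needs the containment $F(x_0,g_0)\subseteq F_0$, which, strictly speaking, requires $F_0$ to absorb the \emph{two-sided} orbit while membership in $J$ only demands forward $\pi$-invariance; the paper glosses over this point, whereas your intersection argument sidesteps it entirely. The price is the extra (easy) verification that $S_\xi$ preserves membership in $J$, which you carry out correctly, including the precompactness of $\xi(p_G(F))$.
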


\begin{proof}
Since $\xi$ commutes with $\psi$, we get 

$$
\ba
S_{\xi} \pi (x_0, g_0) = S_{\xi} (T x_0, \psi (x_0, g_0)) = (T x_0, \xi \psi (x_0, g_0))
\\
= (T x_0, \psi (x_0, \xi g_0)) = \pi (x_0, \xi g_0) = \pi S_{\xi}(x_0, g_0).
\ea
$$

Thus $S_{\xi} \pi ^n = \pi^n S_{\xi}$, i.e. $S_{\xi} (Orb_{\pi} (x_0, g_0)) = Orb_{\pi}(x_0,\xi g_0) $. Using the fact that  $S_{\xi}$ is a homeomorphism we get $S_{\xi} F(x_0, g_0) = F(x_0, \xi g_0)$. Since $F_0$ is a 
minimal element of $J$, by assumption it contains both $(x_0,g_0)$ and $(x_0, g_1)$  we get $F_0 = F(x_0, g_0) = F(x_0,g_1)$. But $S_{\xi} F_0 = S_{\xi} F(x_0, g_0) = F(x_0,\xi g_0) =  F(x_0, g_1) = F_0$. 

\end{proof}

\textit{Proof of Theorem \ref{thm GH}}. Let $B(0,r)$ denote the ball 
of radius $r$ centered at $0 \in G$ with respect to a translation 
invariant metric on $G$. 
We first  assume that there exists a bounded Borel function $f: X 
\rightarrow G$ such that $f(x) \in B(0,m)$ for some $m >0$, and  
$h(x) = f(T x) - f(x)$ for all $x \in X$. Then, it is clear that 
 $$ 
 \underset{k=-j}{\overset{j}{\sum}} h(T^k x) =  -f(T^{-j} x) + 
 f(T ^{(j+1)} x)  \in B(0, 2m).
 $$
 Hence,  $\underset{k=-j}{\overset{j}{\sum}} 
  h(T^k x)$ is bounded in $G$ for all $x$.

Conversely, assume that, for all $x \in X$ and for all $j \geq 0$, 
$\underset{k=-j}{\overset{j}{\sum}} h(T^k x)$ is bounded in $G$.
 Thus, for any point $(x_0, g_0) \in X \times G$, the set 
 $p_G(F(x_0,g_0))$ is contained in a compact set of $G$ (see Remark 
 \ref{bounded assumption}). Therefore, we can apply Lemmas 
 \ref{lemma 6A} - \ref{lemma 6C}. 

Let $F_0$ be a minimal closed invariant set in $X \times G$ with respect 
to $\pi$. We will show that, for any $x_0 \in X$, $F_0$ contains at 
most one point of the form $(x_0,g)$. To see this, assume that for some 
$x_0 \in X$, the set $p_X^{-1} x_0 \cap F_0 $ contains two distinct 
points $(x_0, g_0)$ and $(x_0, g_1)$. Let $k = g_1 - g_0$; then the
 map $\xi_k (g) = g + k$ is a 
homeomorphism of $G$ onto itself which commutes with $\psi$,  
and $\xi_k(g_0) =  g_1$. By Lemma \ref{lemma 6C}, $S_{\xi_k} F_0 
= F_0$ where $S_{\xi_k} (x,g)= (x, g+k)$. Hence, $S_{\xi_k}^i F_0
 = F_0$ for any integer $i$. This contradicts the boundness of 
 $p_G(F_0)$. Thus, $F_0$ has at most one point $(x_0, g_0)$ for 
arbitrary $x_0 \in X$. Therefore, we can uniquely define a function 
$f: X \rightarrow G$ by the condition $f(x_0) = g_0$ where $(x_0, g_0)
 \in F_0$. By Lemma \ref{lemma 6A}, the function $f$ is defined at
  every point  of  $X$. 
Moreover, $f$ can also be considered as a function on $X$ with values in 
the compact set $\overline{p_G (F_0)}$.

Recall following result: If $Y$ is a topological space, $Z$ a compact
 space, and $s: Y \rightarrow Z$ is a function,  then the graph of $s$ is 
 closed if and only if $s$ is continuous.

Since the set $F_0$ is the graph of $f$ and $F_0$  is closed, we 
conclude that $f$ is a continuous function. Finally, for 
$\pi (x_0, f(x_0)) \in F_0$, we have $(T x_0, f(x_0) + h(x_0)) \in F_0$. 
Thus, by definition of $f$, we get $f(T x_0) = f(x_0) + h(x_0)$ as
 needed. \hfill{$\Box$}
\\

\textbf{Acknowledgments} The authors are very thankful to Palle 
Jorgensen and Paul Muhly for their interest to this work and useful 
discussions. We thank the referee for the careful reading and valuable 
suggestions. 
\bibliographystyle{alpha}
\bibliography{references1.bib}

\end{document}